\documentclass[12pt, a4paper, reqno]{amsart}
\usepackage{tikz}
\usetikzlibrary{arrows.meta, positioning, fit}
\usepackage{amssymb,amsmath,amsfonts,amsthm}
\usepackage{yhmath}
\usepackage{url}
\usepackage{preamble}
\usepackage{tikz-cd}
\usepackage{color}
\usepackage{todonotes}

\allowdisplaybreaks

\newcommand{\td}{\mathrm{d}}
\newcommand{\gs}{\geqslant}
\newcommand{\ls}{\leqslant}
\newcommand{\e}{\mathrm{e}}

\newcommand{\M}{\mathcal{M}}

\newcommand{\R}{\mathbb{R}}
\newcommand{\N}{\mathbb{N}}

\newcommand{\V}{\mathcal{V}}

\newtheorem{corollary}[theorem]{Corollary}

\def\vep{\epsilon}

\def\supp{\operatorname{supp}}

\def\top{\operatorname{top}}
\def\erg{\operatorname{erg}}
\def\H{\mathcal{H}}
\def\i{\mathrm{i}}
\def\vep{\varepsilon}

\title
[A prime orbit theorem]
{A prime orbit theorem for smooth surface diffeomorphisms}
\begin{document}
\begin{abstract}
We establish a sharp prime orbit theorem for every homoclinic class of a $C^\infty$ diffeomorphism on a closed surface with positive topological entropy. Let $\mathcal{H}$ be a homoclinic class with topological entropy $h > 0$. Then there exists a constant $\chi_2 < 0$ such that for any $\chi_1 \in (0, h)$,
\[
\lim_{\substack{l(\mathcal{H}) \mid n \\ n\to\infty}}
\frac{\sharp P_{\chi_1,\chi_2}(n)}{e^{nh}}
= l(\mathcal{H}).
\]
Here $P_{\chi_1,\chi_2}(n)$ stands for the set of period-$n$ saddle points in $\mathcal{H}$ with Lyapunov exponents lying outside the interval $[\chi_2,\chi_1]$, and $l(\mathcal{H})$ denotes the period associated with the homoclinic class $\mathcal{H}$.

\end{abstract}

\maketitle

\tableofcontents

\section{Introduction}\label{sec:intro}

In dynamical systems, periodic orbits encode the topological and statistical complexity of underlying dynamics, serving as {basic building blocks of recurrent dynamics, invariant measures, entropy theory, and geometric rigidity}. The result characterizing their asymptotic growth is known as the \textit{prime orbit theorem}, which draws a deep and precise analogy with the prime number theorem in number theory: periodic orbits of a chaotic dynamical system grow exponentially at a rate equal to the topological entropy, with a sharp leading-order constant determined by the system's mixing structure.

The classical theory began with the prime geodesic theorem. Huber \cite{Huber61}, building on Selberg's trace formula \cite{Se56}, proved the theorem for compact surfaces of constant negative curvature. Margulis \cite{Mar69,Mar} later established the corresponding asymptotic for compact manifolds of variable negative curvature by dynamical methods. Bowen developed periodic orbit counting and equidistribution for hyperbolic flows \cite{Bowen72-2,Bowen72-1}, and Parry and Pollicott proved an analogue of the prime number theorem for Axiom A flows \cite{PP83}. In the decades that followed, substantial progress was made on error terms, equidistribution, homology classes, and generalizations to rank-one spaces; see the surveys in \cite{Kni, Pol}, and references therein for a comprehensive account of the classical theory.

For uniformly hyperbolic diffeomorphisms, the analytic and symbolic foundations were developed through Markov partitions \cite{Bowen1970Markov}, the Artin--Mazur zeta function \cite{ArtinMazur1965}, Bowen's periodic-point theory \cite{Bowen1971}, Manning's rationality theorem \cite{Manning1971}, and Ruelle's work on dynamical zeta functions and thermodynamic formalism \cite{Ruelle1976,Ruelle.3}.
In particular, for a mixing Axiom A diffeomorphism,
\[
\lim_{n\to\infty}
\frac{\sharp\{n\text{-periodic points}\}}
{\e^{n h_{\top}}}
=1,
\]
where \(h_{\top}\) denotes the topological entropy of the system. For a transitive basic component, the corresponding asymptotic is taken along multiples of its period, with the period appearing as the leading constant.

Despite the success of the uniformly hyperbolic theory, many natural dynamical systems, including H\'enon attractors, standard maps, and generic surface diffeomorphisms, exhibit only \textit{non-uniform hyperbolicity}: Lyapunov exponents are non-zero almost everywhere with respect to a natural invariant measure, but there is no uniform expansion or contraction rate across the entire phase space. 
Extending the prime orbit theorem to this general setting is more delicate. A prime orbit theorem for all periodic points cannot hold for general smooth systems: Kaloshin \cite{kaloshin2000} showed that generic surface diffeomorphisms may have superexponential growth of periodic points. It is therefore natural to impose a uniform gap between the Lyapunov exponents and zero.

Katok's approximation theorem \cite{katok1980} gives the lower exponential growth rate of hyperbolic periodic points for surface diffeomorphisms with positive entropy; see also the discussion and conjectures in \cite{Katokcon}. For periodic points whose Lyapunov exponents are uniformly bounded away from zero, Burguet \cite{Bur} proved asymptotical periodic expansiveness and obtained the matching upper bound at the exponential scale. These results determine the exponential growth rate, but not the exact leading coefficient.

A recent breakthrough toward exact counting was provided by symbolic dynamics for non-uniformly hyperbolic surface diffeomorphisms. Sarig \cite{Sarig13} constructed countable Markov codings that capture all sufficiently hyperbolic invariant measures, making the thermodynamic formalism available for countable Markov shifts \cite{OS,OS.1}. Buzzi, Crovisier, and Sarig \cite{BCS} subsequently established the finiteness and homoclinic organization of measures of maximal entropy, and their later work developed continuity properties of Lyapunov exponents \cite{bcs2022}. Buzzi \cite{buzzi2020} studied Bowen quotient codings and obtained the optimal lower bound for hyperbolic periodic points, with the precise leading coefficient. The strong positive recurrence theory developed in \cite{SPR2025} supplies the recurrence and entropy-tightness properties for surface diffeomorphisms.

A remaining difficulty in obtaining an exact upper asymptotic is that the available symbolic codings are, in general, finite-to-one rather than globally injective. While finite-to-one coding is sufficient for entropy and measure-theoretic considerations, it does not by itself preserve the exact leading coefficient in periodic-point counting.

In this paper, we develop the periodic-orbit counting in the non-uniformly hyperbolic setting by establishing the \textit{matching upper bound with an exact leading constant}, thus obtaining a prime orbit theorem for every positive-entropy homoclinic class of a \(C^\infty\) surface diffeomorphism.

Given \(\chi_1>0>\chi_2\), a \((\chi_1,\chi_2)\)-hyperbolic periodic point is a saddle periodic point whose Lyapunov exponents lie outside the interval \([\chi_2,\chi_1]\). Given a periodic point \(x\), denote its orbit by \(\mathcal{O}(x)\). For any subset \(A\subset M\) and \(n\in\mathbb{N}\), define
\[
P_{\chi_1,\chi_2}(f,A,n)
:=
\left\{
x\in A:
\sharp\mathcal{O}(x)=n
\text{ and }x\text{ is }(\chi_1,\chi_2)\text{-hyperbolic}
\right\}.
\]

Homoclinic classes are natural analogues of basic sets beyond uniform hyperbolicity. Recall that the homoclinic class of a saddle periodic orbit is the closure of the union of all saddle periodic orbits homoclinically related to it.

\begin{theorem}\label{exp}
Let \(f\) be a \(C^\infty\) diffeomorphism on a closed surface \(M\), and let \(\H\) be a homoclinic class with positive topological entropy. Then there exists \(\chi_2<0\) such that for any
\(
\chi_1\in\bigl(0,h_{\top}(f,\H)\bigr),
\)
one has
\[
\lim_{\substack{l(\H)\mid n\\ n\to\infty}}
\frac{\sharp P_{\chi_1,\chi_2}(f,\H,n)}
{\e^{n h_{\top}(f,\H)}}
=
l(\H),
\]
where \(l(\H)\) is the period of the homoclinic class \(\H\).
\end{theorem}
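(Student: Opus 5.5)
The argument splits into a lower bound and a matching upper bound, each reduced to counting in a symbolic model.

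\emph{Lower bound.} We use the Bowen-quotient coding of Buzzi \cite{buzzi2020}, with the recurrence input of \cite{SPR2025}. By \cite{BCS}, $\H$ carries a unique measure of maximal entropy $\mu$. This measure is hyperbolic with exponents $\lambda^+(\mu)\ge h>\chi_1$ (Ruelle inequality applied to $f$) and $\lambda^-(\mu)\le -h$ (applied to $f^{-1}$). Choose $\chi_2<0$ depending only on $\H$, for instance $\chi_2\in(-h,0)$ close to $0$, or given by the SPR threshold of \cite{SPR2025}; the unstable threshold $\chi_1\in(0,h)$ is arbitrary. Sarig's coding \cite{Sarig13}, refined as in \cite{buzzi2020}, gives an irreducible countable Markov shift $(\Sigma,\sigma)$ of period $l(\H)$ and a H\"older map $\pi:\Sigma\to M$ with the following properties: $\pi$ is finite-to-one; it is injective on the set of points whose forward and backward symbols recur; it lifts $\mu$; and $\Sigma$ is strongly positively recurrent. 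For an SPR irreducible shift of period $p=l(\H)$, Gurevich--Vere-Jones theory gives $\sharp\{\underline x:\sigma^n\underline x=\underline x,\ x_0=a\}\sim p\,e^{nh}\cdot\nu[a]$ along $p\mid n$. Summing over a finite set of states, then letting the set exhaust $\Sigma$, shows that periodic orbits of $\Sigma$ grow like $p\,e^{nh}$. We restrict to periodic sequences whose orbit average of the exponent functions stays outside $[\chi_2,\chi_1]$. The complement has exponentially smaller growth: the tightness and large-deviation estimates of \cite{SPR2025} together with the continuity of exponents in \cite{bcs2022} show that measures of entropy close to $h$ have exponents near those of $\mu$. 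Since the Bowen quotient is injective on periodic orbits, projecting gives $\liminf \sharp P_{\chi_1,\chi_2}(f,\H,n)e^{-nh}\ge l(\H)$.

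\emph{Upper bound.} This is the main obstacle, because it requires every $(\chi_1,\chi_2)$-hyperbolic periodic point of $\H$ to be coded and coded once. The plan has three steps.

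\begin{enumerate}
\item Show that any such periodic point $x$ of period $n$ has a lift. For this we need Pesin charts along the orbit of size bounded below, uniformly in $n$, except at a controlled fraction of times. We would use Burguet's asymptotic periodic expansiveness \cite{Bur}, which controls the $C^\infty$ local entropy (the Yomdin tail) on the orbit.
\item Split the periodic points of period $n$ according to whether their empirical measure has entropy at least $h-\delta$ or less. The points with low entropy are at most $e^{n(h-\delta')}$ in number. This follows by counting $(n,\varepsilon)$-separated periodic points, using Burguet's bound together with the fact that any weak-$*$ limit of their empirical measures has entropy below $h$ (upper semicontinuity of entropy for $C^\infty$ maps, by Newhouse and Yomdin).
\item For the points with high entropy, the empirical measures are close to $\mu$. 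Their orbits therefore return frequently to a fixed finite set of Pesin blocks, so they are coded by periodic words in a finite-alphabet-dominated part of $\Sigma$. There the Bowen relation identifies all codings of a given orbit, so the coding is injective at the level of periodic orbits. The symbolic upper bound $p\,e^{nh}(1+o(1))$ then transfers to $M$.
\end{enumerate}

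The delicate point is step 3. Degenerate identifications could let a single periodic point have several non-equivalent periodic codes, which would inflate the constant. We would handle this with Buzzi's quotient: periodic orbits of $\Sigma$ modulo affiliation correspond bijectively to hyperbolic periodic orbits homoclinically related to $\H$.

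We also have to check that every periodic point in $P_{\chi_1,\chi_2}(f,\H,n)$ with high entropy is homoclinically related to the class, rather than merely lying in the set $\H$. We would prove this with the Buzzi--Crovisier--Sarig criterion: a hyperbolic periodic orbit whose empirical measure is close to $\mu$ in entropy and in exponents is homoclinically related to $\mu$. Its local stable and unstable manifolds have uniform size and intersect transversally those of $\mu$-typical points. Combining the lower and upper bounds yields the limit $l(\H)$.
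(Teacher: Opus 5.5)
Your lower bound is essentially the paper's: it cites Buzzi's Theorem 1.2. The upper bound, however, has a genuine gap at step 3, which is exactly where the sharp constant is decided.

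The Bowen property only says that $\pi(x)=\pi(y)$ iff $x_n\sim y_n$ for all $n$. It does not make the coding injective on periodic orbits. A bijection between points of $M$ and Bowen-equivalence classes also does not let you count classes. Gurevich-type asymptotics count sequences, not classes, and the fibres have varying cardinality. Moreover, a lift of a period-$n$ point may have $\sigma$-period a proper multiple of $n$. For the same reason, your claim in the lower bound that $\pi$ is injective on recurrent sequences is false, since the coding is only finite-to-one. That is harmless there only because one can count just the points of the quotient component $Z$ that visit $[V_0]_Z$. Exhausting a countable shift by finite sets of states also gives no upper bound of the form $p\,e^{nh}(1+o(1))$.

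Buzzi's quotient $\pi_N$, with $N=\delta_{\mathrm{rec}}(\sim)$, is injective only on quotient sequences related to a magic word $w$. Your plan therefore lacks the decisive ingredient: showing that periodic points whose lifts never see $w$ are exponentially negligible. The paper does this in Proposition~\ref{not see magic} by combining four things:
\begin{enumerate}
\item the SPR bound on first-return words avoiding $w$ (Lemma~\ref{through});
\item the anchoring of lifts in finitely many cylinders $[R_1],\dots,[R_\kappa]$ coming from a Pesin block;
\item an $(n,\delta)$-separated set argument;
\item Burguet's bound of $e^{n\rho}$ periodic points per Bowen ball (Lemma~\ref{small loc}).
\end{enumerate}
The remaining orbits are injectively represented by period-$n$ points of the SPR mixing component $Z$ that visit $[V_0]_Z$. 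These are counted through the induced full shift over return words. Its zeta function $1/(1-F(s))$ has simple poles only at $1+2\pi\mathrm{i}k/h$, and that renewal argument is what yields the constant $1$.

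The localization in your steps 1--3 also fails as stated. A periodic orbit measure has zero entropy, so you cannot split periodic points by the entropy of their empirical measure. For the same reason, continuity of exponents along measures whose entropy tends to $h$ says nothing about them. The split must be by weak$^*$ distance to $\mu$, as in Proposition~\ref{away}, using convex neighbourhoods so that averaged empirical measures stay away from $\mu$.

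Weak$^*$-closeness alone only gives $\lambda_2>t_2-\delta$, by semicontinuity. What pins both exponents near those of $\mu$ is the threshold $\chi_2=t_2+\delta$, just above the negative exponent $t_2\ls -h$ of $\mu$, together with continuity of $\lambda_1+\lambda_2$. A threshold $\chi_2$ close to $0$, as you propose, does not do this. Only with exponents pinned does the large-Pesin-block theorem give a fixed block $\Lambda$ with $\mu_{\mathcal O(x)}(\Lambda)>3/4$. That block yields both the homoclinic relation to $\mu$ (Proposition~\ref{homoclinic}) and the finitely many anchoring cylinders, so your assertion that near-$\mu$ orbits return frequently to a fixed Pesin block is unjustified with your choice of $\chi_2$.

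Finally, Burguet's asymptotic periodic expansiveness does not provide Pesin charts of bounded size. It only bounds the number of periodic points in small Bowen balls.
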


It is worth noting that the negative threshold \(\chi_2\) depends only on the homoclinic class and can be chosen simultaneously for every
\(
\chi_1\in\bigl(0,h_{\top}(f,\H)\bigr).
\)

Newhouse \cite{New89} proved that \(C^\infty\) diffeomorphisms on closed manifolds admit measures of maximal entropy. For surface diffeomorphisms with positive entropy, Buzzi, Crovisier, and Sarig \cite{BCS} proved that there are finitely many ergodic measures of maximal entropy, and exactly one in the topologically transitive case.

When the whole system is transitive, we use the shorthand
\[
P_{\chi_1,\chi_2}(f,n)
=
P_{\chi_1,\chi_2}(f,M,n).
\]

\begin{corollary}\label{transitive case}
Let \(f\) be a \(C^\infty\) transitive diffeomorphism on a closed surface \(M\) with positive topological entropy. Then there exist \(l\in\mathbb{N}\) and \(\chi_2<0\) such that for any
\(
\chi_1\in\bigl(0,h_{\top}(f)\bigr),
\)
one has
\[
\lim_{\substack{l\mid n\\ n\to\infty}}
\frac{\sharp P_{\chi_1,\chi_2}(f,n)}
{\e^{n h_{\top}(f)}}
=
l.
\]
In particular, if \(f\) is mixing, one can take \(l=1\).
\end{corollary}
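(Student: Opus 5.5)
Corollary~\ref{transitive case} follows from Theorem~\ref{exp} once we show that, for a transitive $f$ with $h_{\top}(f)>0$, the whole surface is (up to a part that is irrelevant for counting) a single homoclinic class $\H$ with $h_{\top}(f,\H)=h_{\top}(f)$. We then take $l=l(\H)$ and the constant $\chi_2$ given by the theorem.

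\textbf{Step 1: choose the homoclinic class.} By Newhouse \cite{New89} and Buzzi--Crovisier--Sarig \cite{BCS}, a transitive $C^\infty$ surface diffeomorphism with positive entropy has a unique measure of maximal entropy $\mu$. This measure is hyperbolic, since by Ruelle's inequality its positive exponent is at least $h_{\top}(f)>0$ and, applied to $f^{-1}$, its negative exponent is at most $-h_{\top}(f)$. By Katok's theorem and \cite{BCS}, $\mu$ is carried by a homoclinic class $\H=\H(\mathcal O)$ of some saddle orbit $\mathcal O$. Consequently $h_{\top}(f,\H)=h_{\top}(f)$.

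\textbf{Step 2: show every relevant periodic point lies in $\H$.} We must prove $P_{\chi_1,\chi_2}(f,M,n)=P_{\chi_1,\chi_2}(f,\H,n)$, at least for all large $n$ with $l\mid n$. The cleanest route is to show $\H=M$. By \cite{BCS}, distinct ergodic measures of maximal entropy live on distinct homoclinic classes, and transitivity of $f$ forces a single class to carry all entropy. However, $\H=M$ also requires saddles outside $\H$ to be excluded.

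\emph{Main obstacle.} A hyperbolic saddle $p$ whose exponents lie outside $[\chi_2,\chi_1]$ might fail to be homoclinically related to $\mathcal O$. I would handle this in one of two ways:
\begin{itemize}
\item[(a)] Use the $\chi_1$-hyperbolicity of $p$: it has a Pesin unstable manifold of size bounded below uniformly in terms of $\chi_1,\chi_2$. Transitivity gives a dense orbit, and the argument of \cite{BCS} on the crossing of stable and unstable manifolds on surfaces then yields transverse intersections of $W^u(p)$ with $W^s(\mathcal O)$ and vice versa. Hence $p\in\H$.
\item[(b)] If (a) fails, bound the count of such points. Any other homoclinic class has entropy $<h_{\top}(f)$, by uniqueness of the measure of maximal entropy together with entropy-tightness from \cite{SPR2025}. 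Burguet's upper bound \cite{Bur} then shows that the number of $(\chi_1,\chi_2)$-hyperbolic period-$n$ points outside $\H$ is $o(\e^{nh_{\top}(f)})$. This requires controlling the finitely many classes with uniform exponents, which I expect to be the delicate part.
\end{itemize}
Either way, the limit in Theorem~\ref{exp} transfers to the corollary with $l=l(\H)$.

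\textbf{Step 3: the mixing case.} If $f$ is mixing, then $f^k$ is transitive for every $k$. The period $l(\H)$ is defined as the gcd of the periods of saddles homoclinically related to $\mathcal O$. If $l(\H)>1$, then $\H$ splits into $l(\H)$ disjoint closed pieces that are cyclically permuted by $f$, and $f^{l(\H)}$ fixes each piece. Since $\H=M$ up to the discussion above, these pieces would cover $M$, contradicting the transitivity of $f^{l(\H)}$, i.e.\ contradicting mixing. Hence $l=1$.
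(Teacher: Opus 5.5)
Your Step~2 is where the content of the corollary lies, and it is not closed.

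Route~(a) rests on an unjustified premise. Exponent bounds for a periodic orbit are time averages. By themselves they give no control on the Pesin-block index of its points (tempering constants, angles between $E^s$ and $E^u$), so they do not yield local manifolds of size bounded below uniformly in $\chi_1,\chi_2$. This is precisely why the paper needs Theorem~\ref{large measure} to place most of the orbit in one fixed block $\Lambda$. Even with a definite size, transitivity does not by itself produce transverse intersections with $W^s(\mathcal O)$: periodic measures have zero entropy, so the entropy-based crossing criterion of \cite{BCS} does not apply.

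Route~(b) is organized around the wrong decomposition. There is no finite list of homoclinic classes to sum over. The individual gaps $h_{\top}(f,\H')<h_{\top}(f)$ are not uniform. Crucially, nothing excludes $(\chi_1,\chi_2)$-hyperbolic periodic orbits outside $\H$ whose orbit measures accumulate on $\mu$; for those, Burguet's estimate only gives the bound $h_{\top}(f)$, with no gap.

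Step~3 inherits the unproved identity $\H=M$. It also uses disjointness of the cyclic pieces, which the decomposition $\H=\bigcup_{i<l}f^i(A)$ does not assert.

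The missing idea, which is what the paper uses, is to split by the weak$^*$ distance of $\mu_{\mathcal O(x)}$ to $\mu$ rather than by homoclinic class.
\begin{itemize}
\item Orbits with $d_{\M}(\mu_{\mathcal O(x)},\mu)>\eta$ are exponentially negligible by the argument of Proposition~\ref{away}. That argument runs verbatim over all of $M$, since $\mu$ is the unique MME of the transitive $f$.
\item Orbits with $d_{\M}(\mu_{\mathcal O(x)},\mu)\ls\eta$ and $\lambda_2<\chi_2=t_2+\delta$ are handled by Proposition~\ref{homoclinic}. Here the specific value of $\chi_2$ from \eqref{chi1} is used, not merely its sign. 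By Theorem~\ref{large measure} they give mass $>3/4$ to the fixed Pesin block $\Lambda$, hence meet $\Lambda$ near $\supp(\mu|_\Lambda)$. The uniform local manifolds on $\Lambda$ then make them homoclinically related to $\mu$, so $\mathcal O(x)\subset\mathcal X\subset\H$.
\end{itemize}
Thus $\sharp P_{\chi_1,\chi_2}(f,\H,n)\ls\sharp P_{\chi_1,\chi_2}(f,n)\ls\sharp P_{\chi_1,\chi_2}(f,\H,n)+o(\e^{nh_{\top}(f)})$. Theorem~\ref{exp} then yields the corollary without ever needing $\H=M$.
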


\begin{remark}
The mechanism in this paper is expected to yield an analogous prime orbit theorem for any \(C^\infty\) flow on closed three-manifolds with positive entropy, using corresponding estimates on transverse sections. The symbolic dynamics for positive-entropy three-dimensional flows were constructed by Lima and Sarig \cite{LimaSarig2019}, Buzzi, Crovisier, and Lima \cite{BCL}, and Li and Liu \cite{li-liu-singularflowcoding}. This extension will be treated in future work.
\end{remark}

\medskip

\noindent{\bf Strategy of proof}
\medskip

A localization of Buzzi's injective-coding and periodic-counting argument \cite[Theorem 1.2]{buzzi2020} to the irreducible component associated with \(\H\) provides the required lower bound.
The main task is, therefore, to establish the upper bound. We first work in the mixing case; the general case follows by passing to the cyclic decomposition of the homoclinic class.

\begin{enumerate}
\item[(1)] \textbf{Periodic points away from the MME.}
Let \(\mu\) be the unique measure of maximal entropy carried by \(\H\). We divide the relevant periodic points according to whether their orbit measures are close to \(\mu\) in the weak\(^*\) topology. For orbit measures staying a fixed distance from \(\mu\), the uniqueness of the MME gives an entropy gap. Burguet's periodic expansiveness estimate then shows that these periodic points have exponential growth strictly smaller than \(h_{\top}(f,\H)\).

\item[(2)] \textbf{Localization of periodic points close to the MME.}
Using the large-Pesin-block estimate, we choose a negative exponent threshold \(\chi_2\) uniformly and find a compact Pesin block carrying a fixed proportion of every relevant periodic orbit whose orbit measure is sufficiently close to \(\mu\). Uniform local stable and unstable manifolds on this block then imply that all such periodic orbits belong to the Borel homoclinic class carrying \(\mu\).

\item[(3)] \textbf{Symbolic coding and the magic-word decomposition.}
We use the irreducible SPR coding of Buzzi--Crovisier--Sarig and Bowen quotient. A magic word separates the symbolic periodic points into two families. Periodic points whose lifts avoid the magic cylinder satisfy a strict entropy gap. The proof combines the SPR first-return estimate with Burguet's local periodic packing bound and a separated-set argument.  

\item[(4)] \textbf{Counting periodic points that see the magic word.}
The Bowen quotient is injective on the symbolic component generated by the magic word, when seeing magic words.
Inducing on the magic cylinder yields a countable full shift over first-return words, with an integer-valued return time depending only on the zero-th symbol. The resulting zeta function is a renewal series, which gives its meromorphic extension, the simple leading poles, and the sharp periodic-orbit asymptotic.
Combining this with the entropy gap for the non-magic family gives the upper bound with leading constant \(1\) in the mixing case.
This is the countable-state counterpart of the classical transfer-operator and zeta-function approach to prime orbit theorems \cite{Pollicott1986,ParryPollicott1990}.
\end{enumerate}

\medskip

\noindent{\bf Organization of the paper}
\medskip

Section \ref{pre} recalls the necessary background from ergodic theory,
Pesin theory, strong positive recurrence, and homoclinic classes.
Section \ref{sec:away from mme} proves that periodic orbits whose
empirical measures stay away from the measure of maximal entropy are
exponentially negligible. Section \ref{sec:close to mme} introduces
the countable Markov coding and the Bowen quotient, localizes the
relevant periodic points on a Pesin block, and establishes the entropy
gap for orbits avoiding a magic word. Finally, Section \ref{see}
counts the periodic orbits visiting the magic cylinder by means of the
induced symbolic system and its dynamical zeta function.

\medskip
\noindent{\bf Acknowledgments.}
During the preparation of this manuscript, we learned that Buzzi, Crovisier and Sarig have independently derived a periodic orbit growth estimate via a different approach. We thank them for kindly communicating their progress. G. Liao was partially supported by the National Key R\&D Program of China (2022YFA1005802).
Y. Tong was partially supported by the National Key R\&D Program of China (2022YFA1005801).
\section{Preliminaries}\label{pre}

\subsection{Ergodic theory}

We first present some preliminary concepts from abstract ergodic theory. Let $T$ be an arbitrary transformation on a measurable space $(\Omega,\mathcal{F})$. Denote by $\mathcal{M}(T)$ (resp. $\mathcal{M}_{\mathrm{erg}}(T)$) the set of $T$-invariant (resp. ergodic $T$-invariant) probability Borel measures on $(\Omega,\mathcal{F})$. Given a $T$-invariant measurable subset $X \subseteq \Omega$, define
\[
\mathcal{M}(T,X):= \bigl\{\mu \in \mathcal{M}(T): \mu(X) = 1\bigr\}.
\]
For any $\mu \in \mathcal{M}(T)$, let $h_{\mu}(T)$ stand for its metric entropy. The topological entropy of $T$ restricted to $X$ is
\[
h_{\mathrm{top}}(T, X):= \sup\bigl\{h_{\mu}(T): \mu \in \mathcal{M}(T, X)\bigr\}.
\]

By the variational principle, if $\Omega$ is a compact metric space and $T$ is continuous, then $h_{\mathrm{top}}(T, \Omega)$ coincides with the global topological entropy $h_{\mathrm{top}}(T)$ of $T$; see \cite{Walter}.

An invariant measure $\mu \in \mathcal{M}(T, X)$ satisfying $h_{\mu}(T) = h_{\mathrm{top}}(T, X)$ is termed a \textit{measure of maximal entropy} (MME) over $X$.

\subsection{Pesin theory }
Let \(M\) be a closed smooth Riemannian manifold, and let \(f:M\to M\) be a \(C^1\) diffeomorphism. Throughout this section, we fix two Lyapunov rate parameters \(\chi>0\) and \(\varepsilon>0\) satisfying \(\varepsilon\ls \chi\), and we follow the standard notational framework of Pesin’s nonuniform hyperbolicity theory.

\subsubsection{Pesin block}
\begin{definition}
A nonempty compact subset \(\Lambda\subset M\) is called a \((\chi,\varepsilon)\)-Pesin block if for every point \(x\in \bigcup_{n\in\mathbb Z}f^n(\Lambda)\), the tangent bundle admits a  direct sum  decomposition
\[
T_xM = E^s(x)\oplus E^u(x).
\]
There exists a uniform constant \(k\in \mathbb{N}\) such that for all integers \(n\in\mathbb{N}\), all integers \(m\in \mathbb{Z}\), and all points \(y\in\Lambda\), the exponential decay estimate
\[
\max\left(\left\|Df^n|_{E^s(f^m(y))}\right\|,\left\|Df^{-n}|_{E^u(f^m(y))}\right\|\right) \ls e^{k\vep}\exp\left(-\chi n + \varepsilon|m|\right)
\]
holds uniformly on the forward and backward orbits of \(\Lambda\).

Let $\Lambda_k(\chi,\vep)$ be the maximal Pesin block  satisfying the above property for fixed $k$.
\end{definition}

  The global Pesin set is the countable union of compact Pesin blocks:
\[
\Lambda^*(\chi, \vep) = \bigcup_{k\gs1}\Lambda_k(\chi,\vep),
\]
where each \(\Lambda_k(\chi, \epsilon)\) constitutes a standard \((\chi,\varepsilon)\)-Pesin block. 
On each Pesin block, the stable and unstable subbundles \(E^s\) and \(E^u\) are continuous, and they are invariant under the diffeomorphism \(f\) on the global Pesin set.

\subsubsection{Stable/unstable manifolds}
We recall the stable manifold theorem for nonuniformly hyperbolic points.

\begin{theorem}[Pesin \cite{pesin1976}]
Let $M$ be a closed Riemannian manifold and fix $r>1$. There exists a positive continuous function $\varepsilon_{M,r}$ such that the following holds.

For any $C^r$ diffeomorphism $f$ of $M$, any pair $\chi,\varepsilon>0$ with $\varepsilon<{\varepsilon_{M,r}(\chi,\|f\|_{C^r})}$, and any $(\chi,\varepsilon)$-Pesin block $\Lambda$, each point $x\in\Lambda$ possesses a $C^r$ embedded local stable disc $W^s_{\mathrm{loc}}(x)$ satisfying
\[
\forall y\in W^s_{\mathrm{loc}}(x),\quad \limsup_{n\to\infty}\frac1n\log d(f^n(x),f^n(y))<0.
\]

{Furthermore, the local stable discs $W^s_{\mathrm{loc}}(x)$ depend continuously on $x\in\Lambda$ in the $C^1$ topology. An identical statement holds for local unstable manifolds $W^u_{\mathrm{loc}}(x)$.}
\end{theorem}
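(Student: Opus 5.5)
We would prove the statement by the classical Hadamard--Perron graph transform carried out in Lyapunov (Pesin) charts, so that the nonuniform problem along the orbit of a point of the block becomes a uniformly hyperbolic problem for a sequence of maps. First, for $x$ in the $(\chi,\varepsilon)$-Pesin block $\Lambda$ and $m\in\mathbb Z$, we introduce the Lyapunov norm on $T_{f^m x}M=E^s\oplus E^u$. On $E^s$ we set $\|v\|'_{f^mx}=\sum_{n\gs0}e^{(\chi-2\varepsilon)n}\|Df^n v\|$, and analogously on $E^u$ with $Df^{-n}$; the two summands are declared orthogonal. The defining estimate of the block makes these sums converge and gives $\|v\|\ls\|v\|'\ls C e^{k\varepsilon}e^{\varepsilon|m|}\|v\|$. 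In the new norm $Df$ contracts $E^s$ by $e^{-(\chi-2\varepsilon)}$ and expands $E^u$ by $e^{\chi-2\varepsilon}$, exactly and uniformly. Composing with exponential maps, we obtain charts $\Phi_{f^mx}:\R^2\supset B(0,r_m)\to M$ with $r_m=r_0e^{-k\varepsilon}e^{-\varepsilon|m|}$. In these charts
\[
\tilde f_m=\Phi_{f^{m+1}x}^{-1}\circ f\circ\Phi_{f^mx}=A_m+g_m,
\]
where $A_m=\mathrm{diag}(a_m,b_m)$ with $|a_m|\ls e^{-\chi+2\varepsilon}$, $|b_m|\gs e^{\chi-2\varepsilon}$, and $g_m(0)=0$, $Dg_m(0)=0$.

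The key quantitative step is to show $\mathrm{Lip}(g_m|_{B(0,r_m)})\ls\delta$ with $\delta$ small compared with $1-e^{-\chi+2\varepsilon}$. This is where $C^r$ with $r>1$ and the threshold $\varepsilon<\varepsilon_{M,r}(\chi,\|f\|_{C^r})$ enter. The Hölder (or $C^2$) modulus of $Df$ in the charts is inflated by the chart distortion $e^{O(\varepsilon|m|)}$. Because the radii $r_m$ shrink at rate $e^{-\varepsilon|m|}$ and $Df$ is $(r-1)$-Hölder, the product stays bounded by $\delta$ provided $\varepsilon$ is small relative to $\chi$ and $\|f\|_{C^r}$. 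Getting this bookkeeping right, so that the exponential loss in the charts is beaten by the Hölder gain on shrinking balls, is the main obstacle. It is exactly the source of the continuous function $\varepsilon_{M,r}$.

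With uniform hyperbolicity and small nonlinearity in hand, we apply the graph transform. Consider the space of sequences $(\psi_m)_{m\gs0}$ of $1$-Lipschitz graphs $\{(s,\psi_m(s))\}$ over the stable axis in $B(0,r_m)$, with $\psi_m(0)=0$. The backward graph transform $\psi_{m+1}\mapsto\psi_m$, defined by $\mathrm{graph}(\psi_m)=\tilde f_m^{-1}(\mathrm{graph}\,\psi_{m+1})\cap B(0,r_m)$, is a contraction in the $C^0$ metric weighted by $e^{\varepsilon m}$, thanks to the cone invariance and the expansion gap. Its fixed point gives $W^s_{\mathrm{loc}}(x)=\Phi_x(\mathrm{graph}\,\psi_0)$. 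Points on this graph satisfy $|\tilde f_{m-1}\circ\cdots\circ\tilde f_0(y)|\ls e^{-(\chi-3\varepsilon)m}|y|$, and mapping back through $\Phi_{f^mx}$ costs at most $e^{\varepsilon m}$. Hence $\limsup\frac1n\log d(f^nx,f^ny)\ls-\chi+4\varepsilon<0$. The $C^r$ regularity comes from the usual fiber-contraction argument applied to the jets $(\psi,D\psi,\dots)$, using the same spectral gap, or equivalently from the $C^r$ section theorem; for this we may need to shrink $\varepsilon$ further, which is again absorbed into $\varepsilon_{M,r}$.

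Finally, we establish continuous dependence on $x\in\Lambda$. On a block the splitting $E^s\oplus E^u$ is continuous, so for $x'\to x$ in $\Lambda$ the chart maps $\tilde f_m$ converge to the corresponding ones for $x$ in $C^1$ on each ball, uniformly in bounded $m$. The fixed point of a uniform contraction depends continuously on parameters. Truncating at large $m$ (the contributions of the tail are exponentially small in the weighted metric) gives continuity of $\psi_0$ in $C^1$. Applying everything to $f^{-1}$, which exchanges the roles of $E^s$ and $E^u$, gives the unstable statement.
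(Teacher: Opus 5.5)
The paper gives no proof of this theorem. It quotes Pesin's result and uses it as a black box, so the benchmark is the standard Lyapunov-chart/graph-transform proof, which is the route you take. The architecture is right, but the step you yourself call the main obstacle is done incorrectly.

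Write $\tilde f_m=C_{m+1}^{-1}\circ F_m\circ C_m$, with $F_m=\exp^{-1}_{f^{m+1}x}\circ f\circ\exp_{f^mx}$ and $C_m$ the Lyapunov chart. Then $\|C_m\|$ is bounded and, ignoring angles for the moment, $\|C_{m+1}^{-1}\|\approx e^{\varepsilon(k+|m|)}$. Let $\alpha=\min\{r-1,1\}$. The $\alpha$-H\"older continuity of $DF_m$ gives
\[
\mathrm{Lip}\bigl(g_m|_{B(0,r_m)}\bigr)\lesssim \|C_{m+1}^{-1}\|\,r_m^{\alpha}\approx e^{\varepsilon(k+|m|)}\,r_m^{\alpha}.
\]
With your radii $r_m=r_0e^{-\varepsilon(k+|m|)}$ this is of order $r_0^{\alpha}e^{(1-\alpha)\varepsilon(k+|m|)}$, which is unbounded in $m$ whenever $r<2$. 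Taking $\varepsilon$ small cannot fix this: both exponents are proportional to $\varepsilon$, so it is their ratio that is wrong, not their size. You must take $r_m\asymp e^{-\varepsilon(k+|m|)/\alpha}$. Then the discs shrink by $e^{-\varepsilon/\alpha}$ per step. The backward graph transform carries a stable graph over the disc of radius $r_{m+1}$ to one spanning the larger disc of radius $r_m$ only if $\varepsilon/\alpha$ is smaller than $\chi-2\varepsilon$ minus the nonlinearity. That inequality is where the $r$-dependent threshold on $\varepsilon$ really comes from, and it is why $\varepsilon_{M,r}$ degenerates as $r\downarrow1$.

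A second omission feeds into the same estimate. The Pesin block defined in the paper has no lower bound on $\angle\bigl(E^s,E^u\bigr)$ along the orbit. So your inequality $\|v\|'\le Ce^{k\varepsilon}e^{\varepsilon|m|}\|v\|$ follows from the defining estimate only for $v\in E^s\cup E^u$. For general $v$ you first need an angle bound derived from the growth estimates. One way is to compare $\|Df^nv_s\|$ with $\|Df^nv_u\|$, for unit vectors $v_s\in E^s(f^mx)$ and $v_u\in E^u(f^mx)$, at a time $n\approx\varepsilon(k+|m|)/\chi+O(1)$. This gives $\sin\angle\bigl(E^s,E^u\bigr)(f^mx)\gtrsim e^{-c\varepsilon(k+|m|)}$, with $c$ depending on $\chi$ and $\|Df^{\pm1}\|_{C^0}$. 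The true chart distortion is therefore $e^{c'\varepsilon(k+|m|)}$ with $c'>1$. This $c'$ must enter both the choice of $r_m$ and the threshold; it is how $\|f\|_{C^r}$ enters $\varepsilon_{M,r}$, and without it your radii fail even for $r\ge2$.

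Finally, a contraction in $C^0$ only yields $C^0$ continuity of $x\mapsto W^s_{\mathrm{loc}}(x)$. For the claimed $C^1$ continuity you have two options. You can run the fiber contraction on slope fields uniformly over $\Lambda$. Alternatively, combine $C^0$ continuity with uniform $C^{1+\alpha}$ bounds on the discs over $\Lambda$ and Arzel\`a--Ascoli.
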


{
For a Lyapunov regular point $x$, the global stable and unstable manifolds are defined by
\[
W^s(x):=\left\{y\in M:\limsup_{n\to\infty}\frac{1}{n}\log d(f^n(x),f^n(y))<0\right\},
\]
\[
W^u(x):=\left\{y\in M:\limsup_{n\to\infty}\frac{1}{n}\log d(f^{-n}(x),f^{-n}(y))<0\right\}.
\]
Equivalently, using the local Pesin manifolds along the orbit of $x$, one has
\[
W^s(x)=\bigcup_{n\gs0}f^{-n}\big[W^s_{\mathrm{loc}}(f^n(x))\big],\quad
W^u(x)=\bigcup_{n\gs0}f^n\big[W^u_{\mathrm{loc}}(f^{-n}(x))\big].
\]
}

\begin{definition}
Define the fully nonuniformly hyperbolic set
\[
\mathrm{NUH}(f):=\bigcup_{\chi>0}\bigcap_{\varepsilon>0}\big\{x: x\text{ is contained in some }(\chi,\varepsilon)\text{-Pesin block}\big\}. \tag{2.2}
\]
The recurrent nonuniformly hyperbolic set is
\[
\mathrm{NUH}'(f):=\bigcup_{\chi>0}\bigcap_{\varepsilon>0}\bigcup_{\substack{\Lambda\\(\chi,\varepsilon)\text{-Pesin block}}}\big\{x: x\text{ is a limit of periodic points lying in }\Lambda\big\}. 
\]
\end{definition}

For later usage with a fixed hyperbolicity rate $\chi>0$, we set
\[
\mathrm{NUH}_\chi(f):=\bigcap_{\varepsilon>0}\big\{x: x\text{ is contained in a }(\chi,\varepsilon)\text{-Pesin block}\big\}
\]
and define $\mathrm{NUH}'_\chi(f)$ analogously.

\begin{theorem}[{Katok \cite{katok1980}}]
Let $f$ be a $C^{1+}$ diffeomorphism on a closed manifold. Every hyperbolic $f$-invariant ergodic measure satisfies $\nu(\mathrm{NUH}'(f))=1$.
\end{theorem}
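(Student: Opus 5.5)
The plan is to combine the Oseledets--Pesin description of a hyperbolic ergodic measure with Poincaré recurrence and the Katok closing lemma. The closing lemma is the one ingredient not stated earlier in the paper. Let \(\nu\) be a hyperbolic ergodic measure. Then there is \(\chi_0>0\) with all Lyapunov exponents of \(\nu\) outside \([-\chi_0,\chi_0]\). Fix \(\chi:=\chi_0/2\), which leaves room for the losses in the constants below. Fix also a sequence \(\varepsilon_j\downarrow 0\) with \(\varepsilon_j<\varepsilon_{M,r}(\chi,\|f\|_{C^r})\).

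By the Oseledets theorem and the standard construction of Lyapunov-regular tempering kernels, \(\nu\)-almost every point lies in \(\Lambda^*(\chi_0,\varepsilon)\) for every \(\varepsilon>0\). Moreover \(\nu(\Lambda_k(\chi_0,\varepsilon))\to 1\) as \(k\to\infty\). Since the intersection over \(\varepsilon>0\) in the definition of \(\mathrm{NUH}'\) is monotone in \(\varepsilon\), it suffices to show the following for each fixed \(j\): \(\nu\)-a.e.\ \(x\) is a limit of periodic points lying in a single \((\chi,\varepsilon_j)\)-Pesin block. Intersecting the countably many resulting full-measure sets then gives \(\nu(\mathrm{NUH}'_\chi(f))=1\), hence \(\nu(\mathrm{NUH}'(f))=1\).

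Fix \(j\) and \(k\), and write \(\Lambda=\Lambda_k(\chi_0,\varepsilon_j')\) for a suitably smaller \(\varepsilon_j'\). Consider the set of \(x\in\Lambda\) such that \(x\in\supp(\nu|_\Lambda)\) and, for every \(\delta>0\), there are infinitely many \(n\) with \(f^n x\in\Lambda\) and \(d(f^nx,x)<\delta\). Poincaré recurrence for the induced map on \(\Lambda\), applied to a countable basis of neighbourhoods, shows this set has full \(\nu|_\Lambda\)-measure. Taking the union over \(k\) gives full \(\nu\)-measure.

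For such an \(x\), apply the Katok closing lemma. For every \(\eta>0\) there is \(\delta=\delta(k,\eta)>0\) such that whenever \(y,f^ny\in\Lambda\) with \(d(y,f^ny)<\delta\), there is a periodic point \(p=f^np\) with \(d(f^ip,f^iy)<\eta\) for \(0\ls i\ls n\). In addition, \(p\) belongs to a \((\chi,\varepsilon_j)\)-Pesin block \(\Lambda_{k'}(\chi,\varepsilon_j)\), where \(k'\) depends only on \(k\) and \(j\), not on \(n\) or \(\eta\). Applying this to \(y=x\) along the recurrence times with \(\eta\to0\) produces periodic points in the fixed block \(\Lambda_{k'}(\chi,\varepsilon_j)\) converging to \(x\), which is exactly what is required.

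The main obstacle is the uniformity statement in the closing lemma, namely that the shadowing periodic orbit lies in a Pesin block with index \(k'\) independent of the period. I would prove it in Lyapunov charts along the orbit segment \(x,\dots,f^nx\). In these charts \(f\) is \(C^1\)-close to a hyperbolic linear map with rates \(e^{\pm\chi_0}\), and the chart sizes are tempered by \(e^{-\varepsilon_j'|m|}\). Closing up the pseudo-orbit by a contraction or graph-transform argument yields \(p\). Invariant cone families transported along the orbit of \(p\) then give splittings \(E^s(p)\oplus E^u(p)\) with rates at least \(\chi_0-O(\varepsilon_j')\ge\chi\). The distortion of the charts at \(x\) is bounded in terms of \(k\), which gives the constant \(e^{k'\varepsilon_j}\). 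Choosing \(\varepsilon_j'\) small relative to \(\varepsilon_j\) and to \(\chi_0-\chi\) absorbs the errors, and the \(\varepsilon|m|\) tempering for all \(m\in\mathbb Z\) follows from the periodicity of \(p\) together with the tempering of the charts.
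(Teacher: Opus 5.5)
The paper gives no proof of this statement; it only cites Katok. Your argument (Poincar\'e recurrence into small neighbourhoods inside a Pesin block, followed by Katok's closing lemma in the strengthened form where the closing orbits lie in a Pesin block $\Lambda_{k'}(\chi,\varepsilon_j)$ with $k'$ depending only on $k$ and $j$) is the standard proof from that source, and it is correct. In your sketch of the uniform-index closing lemma, say explicitly that $f^nx\in\Lambda$, which you already built into the recurrence set, is what bounds the chart distortion at $f^rx$ by a $k$-dependent constant times $e^{\varepsilon_j'\min(r,n-r)}$ and controls the chart change from $f^nx$ back to $x$ (via continuity of the splitting and adapted norms on the compact block $\Lambda$), which the estimates over iterates longer than one period require.
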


\subsection{Strong positive recurrence}

We now further state a strong form of non-uniform hyperbolicity:  the Strong Positive Recurrence (SPR) property of diffeomorphisms, introduced by Buzzi, Crovisier and Sarig in \cite{SPR2025}, which demands that all ergodic measures with entropy exceeding a prescribed threshold assign a uniformly bounded positive mass to a fixed Pesin block:

\begin{definition}[Buzzi, Crovisier and Sarig  \cite{SPR2025}]\label{SPR}
A diffeomorphism $f$ is said to be Strongly Positively Recurrent (SPR) on an invariant Borel subset $X$ if there exists $\chi>0$ such that for every $\varepsilon>0$, one can find a $(\chi,\varepsilon)$-Pesin block $\Lambda$,  constants $h_0 < h_{\mathrm{top}}(f, X)$ and $\tau>0$ such that:
\[
\forall\,\nu\in \mathcal{M}_{\erg}(f, X):\quad h_{\nu}(f) > h_0 \implies \nu(\Lambda) > \tau.
\]
In this setting, $X$ is called an SPR subset for $f$.
\end{definition}


\subsection{Homoclinic classes}\label{hom}

In the setting of  Theorem \ref{exp}, we consider a homoclinic class $\H$. To set the stage for the proof, we first recall the precise definition of homoclinic classes. Let $f$ be a $C^{\infty}$  diffeomorphism on a closed Riemannian surface $M$. Denote by $\mathcal{M}^{*}(f)$  the set of all saddle type hyperbolic, $f$-invariant ergodic measures.  It holds that for any $\mu \in \mathcal{M}^{*}(f)$, almost every $x$ belongs to $\mathrm{NUH}(f)$  thus possesses stable and unstable manifolds.

\begin{definition}[Homoclinically Related]\label{def homoclinic related}
For $\mu_{1}, \mu_{2} \in \mathcal{M}^{*}(f)$, we say $\mu_{1}$ and $\mu_{2}$  are homoclinically related if there exist measurable sets $A_{1}, A_{2} \subset M $ with $\mu_{i}(A_{i}) > 0$ such that for all $(x_{1}, x_{2}) \in A_{1} \times A_{2}$, the manifolds $W^{u}(x_{1}) $ and $W^{s}(x_{2})$  have a point of transverse intersection; and similarly, there exist sets $B_{1}, B_{2} \subset M$ with $\mu_{i}(B_{i}) > 0$  such that for all $(y_{1}, y_{2}) \in B_{1} \times B_{2}$, the manifolds $W^{s}(y_{1})$  and $W^{u}(y_{2})$  have a point of transverse intersection. We denote this relation by $\mu_{1} \overset{h}{\sim} \mu_{2}$.
\end{definition}

The homoclinic relation of measures is an equivalence relation (Proposition 2.11 of \cite{BCS}). Note that each periodic orbit $\mathcal{O}$  carries a unique invariant ergodic measure $\mu_{\mathcal{O}}$. We write $\mathcal{O} \overset{h}{\sim} \mu$ when $\mu_{\mathcal{O}} \overset{h}{\sim} \mu$.

Denote
$$\operatorname{Per}_{h}(f) := \{\mathcal{O}: \mathcal{O} \text{ is a hyperbolic periodic orbit of saddle type}\}.$$
For two orbits $\mathcal{O}_1, \mathcal{O}_2 \in \operatorname{Per}_{h}(f)$, the homoclinic relation $\mathcal{O}_1 \overset{h}{\sim} \mathcal{O}_2$ holds if and only if $\mu_{\mathcal{O}_1} \overset{h}{\sim} \mu_{\mathcal{O}_2}$. Then we can define the homoclinic class.

\begin{definition}[Homoclinic class]
The homoclinic class of $\mathcal{O}\in\operatorname{Per}_{h}(f)$ is the set

{
$$\H(\mathcal{O}) := \overline{\left\{\mathcal{O}' \in \operatorname{Per}_{h}(f) : \mathcal{O}' \overset{h}{\sim} \mathcal{O}\right\}}.
$$
}
The integer $$\gcd\{\sharp\mathcal{O}' : \mathcal{O}' \in \operatorname{Per}_{h}(f),\,\, \mathcal{O}' \overset{h}{\sim} \mathcal{O}\}$$ is called the period of the homoclinic class, denoted by $l(\H(\mathcal{O}))$, where $\sharp A $ is the cardinality of a set $A$.
\end{definition}

{By the Spectral Decomposition Theorem, see for instance \cite[Theorem 1]{BCS}, there exists a compact set $A$ such that for $l=l(\H(\mathcal{O}))$, one has $\H(\mathcal{O})=\cup_{i=0}^{l-1} f^i(A)$, $f^{l}(A)=A$, and $f^{l}$ is topologically mixing on $A$.}
In what follows, we assume $f$ is mixing on $\H$ by considering its period, i.e., we take $l(\H) = 1$.
In the general setting, we only need to take a $l(\mathcal{H})$ iteration of $f$ and count the periodic orbits in all the cyclic components.


\begin{definition} Two points $x,y \in \mathrm{NUH}'(f)$ are homoclinically related $(x \sim y)$ if $W^s(x)$ has a transverse intersection point with an iterate of $W^u(y)$, and $W^u(x)$ has a transverse intersection point with an iterate of $W^s(y)$.
    \end{definition}

By Proposition 2.6 of \cite{SPR2025},  the homoclinic relation $\sim$ is an equivalence relation on $\mathrm{NUH}'(f)$. 
\begin{definition}
  Borel homoclinic classes are equivalence classes of $\sim$ in $\mathrm{NUH}'(f)$. 
\end{definition}

Any hyperbolic ergodic measure is carried by a Borel homoclinic class. 
In  Theorem \ref{exp}, since $\H$ is a homoclinic class with positive topological entropy, it supports a unique measure $\mu \in \mathcal{M}^{*}(f)$  with entropy equal to $h_{\text{top}}(f, \H)$, which is called the measure of maximal entropy (MME); see \cite[Theorem 2]{BCS}.  In what follows, we denote by $\mathcal{X}$ the  (unique) 
 Borel homoclinic class that carries $\mu$.  

 The sets $\H$ and $\mathcal{X}$ describe the same homoclinic component
from two different viewpoints. Indeed, let
$\mathcal{O}\in\operatorname{Per}_{h}(f)$ be such that
$\H=\H(\mathcal{O})$. The set $\H$ is the \emph{topological homoclinic
class} of $\mathcal{O}$: it is a compact set obtained by taking the
closure of all saddle periodic orbits homoclinically related to
$\mathcal{O}$. By contrast, $\mathcal{X}$ is the \emph{Borel
homoclinic class} carrying $\mu$: it is the equivalence class, inside
$\mathrm{NUH}'(f)$, of $\mu$-typical points under the pointwise
homoclinic relation, and it need not be closed.

By \cite[Theorem 2]{BCS}, the measure $\mu$ is homoclinically related
to $\mathcal{O}$. Equivalently, $\mu$ and $\mu_{\mathcal{O}}$ are
carried by the same Borel homoclinic class; see
\cite[Remark 2.11]{SPR2025}. Hence $\mathcal{X}$ is also the Borel
homoclinic class of $\mathcal{O}$, and
\[
    \mathcal{X}\subseteq\H,
    \qquad
    \overline{\mathcal{X}}=\H,
    \qquad
    \mu(\mathcal{X})=1,
    \qquad
    \operatorname{supp}(\mu)=\H.
\]
The difference $\H\setminus\mathcal{X}$ may contain points
outside $\mathrm{NUH}'(f)$ or points belonging to other Borel
homoclinic classes, but it has zero measure for every {ergodic} invariant measure with positive entropy carried by $\H$; see
\cite[Remark 2.9]{SPR2025}. We fixed $\H$, $\mathcal{X}$ and $\mu$ in the following paper.
 
 \begin{theorem}[Theorem 3.15 of \cite{SPR2025}]\label{spr h x} $\H$ and $\mathcal{X}$ are SPR. 
      \end{theorem}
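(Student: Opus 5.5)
The statement to prove is that $\H$ and $\mathcal{X}$ are SPR in the sense of Definition~\ref{SPR}. My plan is to derive this from the general SPR theorem of Buzzi--Crovisier--Sarig for $C^\infty$ surface diffeomorphisms, using Ruelle's inequality and Burguet--Buzzi type control of small exponents. The key input from \cite{SPR2025} is the following. For $C^\infty$ surface diffeomorphisms, the Lyapunov exponents of ergodic measures are controlled from below in terms of entropy. More precisely, if $\nu_n\to\nu$ and $h(\nu_n)\to h>0$, then the exponents of $\nu$ are bounded below in terms of $h$; this is the continuity of exponents from \cite{bcs2022}, whose defect is controlled by the Yomdin local volume growth, which vanishes in the $C^\infty$ setting. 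So I would fix $\chi$ with $0<\chi<h_{\top}(f,\H)$. By Ruelle's inequality, every ergodic $\nu$ carried by $\H$ with $h_\nu(f)>\chi$ is a saddle measure with $\lambda^u(\nu)>\chi$ and $\lambda^s(\nu)<-\chi$.

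Next, I argue by contradiction. Suppose that for some $\varepsilon>0$ no Pesin block $\Lambda$ works. Then there are ergodic measures $\nu_n$ on $\H$ (respectively on $\mathcal{X}$) with $h_{\nu_n}(f)\to h_{\top}(f,\H)$ and $\nu_n(\Lambda_k(\chi,\varepsilon))\to 0$ for every $k$; a diagonal argument over $k$ and over thresholds $h_0\uparrow h_{\top}$ produces this sequence. Pass to a weak$^*$ limit $\nu$. By upper semicontinuity of entropy for $C^\infty$ maps (Newhouse--Yomdin), $h_\nu(f)\ge\limsup h_{\nu_n}=h_{\top}(f,\H)$. Hence $\nu$ is a measure of maximal entropy on $\H$, so its ergodic components are MMEs; by \cite[Theorem 2]{BCS} uniqueness this forces $\nu=\mu$. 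The exponents of $\mu$ lie outside $[-\chi,\chi]$, so $\mu(\Lambda_k(\chi,\varepsilon))>1/2$ for large $k$.

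The contradiction requires transferring this mass to the $\nu_n$, which is the main obstacle, since Pesin blocks are not open and mass on them is not weak$^*$ semicontinuous in the needed direction. I would follow \cite{SPR2025}: use entropy-continuity of the exponents, namely $\lambda^u(\nu_n)\to\lambda^u(\mu)$ when $h(\nu_n)\to h(\mu)$ in the $C^\infty$ surface case, together with a "large Pesin block" lemma. That lemma says that if $\int\log\|Df|_{E^u}\|\,d\nu_n$ converges and the unstable directions converge in the projective bundle, then a definite proportion of $\nu_n$-typical orbit times are $(\chi,\varepsilon)$-hyperbolic times. This is the Pliss lemma applied to lifts to the projective bundle $\mathbb{P}TM$, where weak$^*$ limits of the lifts $\hat\nu_n$ project to $\mu$ with fibers on $E^u$ by the exponent convergence. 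Doing the same for $f^{-1}$ and intersecting the two sets of times gives a uniform $\tau>0$ with $\nu_n(\Lambda_k)>\tau$ for a fixed $k$, contradicting the choice of $\nu_n$.

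Finally, the passage from $\H$ to $\mathcal{X}$ is immediate. Every ergodic measure on $\H$ with positive entropy gives full mass to $\mathcal{X}$ by \cite[Remark 2.9]{SPR2025}, so $\mathcal{M}_{\erg}(f,\mathcal{X})$ and the high-entropy part of $\mathcal{M}_{\erg}(f,\H)$ coincide. Moreover $h_{\top}(f,\mathcal{X})=h_{\top}(f,\H)$, since $\mu(\mathcal{X})=1$. The same $\chi$, $\Lambda$, $h_0$ and $\tau$ therefore serve for both sets.
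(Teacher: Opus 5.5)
The paper gives no argument for this statement; it imports it as Theorem~3.15 of \cite{SPR2025}. Your outline reconstructs the mechanism behind that result, and its structure is sound:
\begin{itemize}
\item The diagonal extraction of ergodic $\nu_n$ with $h_{\nu_n}(f)\to h_{\mathrm{top}}(f,\mathcal{H})$ and $\nu_n(\Lambda_k(\chi,\varepsilon))\to0$ for every $k$ is correct, since the maximal blocks are nested.
\item Newhouse's upper semicontinuity, together with uniqueness of the MME on $\mathcal{H}$, forces $\nu_n\to\mu$.
\item The $C^\infty$ case of \cite{bcs2022} applies: the defect term vanishes, and $\mu$ is ergodic with $h_\mu>0$. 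This gives convergence of both exponents.
\item The passage from $\mathcal{H}$ to $\mathcal{X}$ works as you describe.
\end{itemize}

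The step you must state more sharply is the last one, and as written it does not go through. Suppose all you know is that a \emph{definite} proportion of times are good for $E^u$, and a definite proportion are good for $E^s$ under $f^{-1}$. Then you cannot intersect: two sets of times with densities $c_u,c_s>0$ can be disjoint unless $c_u+c_s>1$.

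A second problem is that one-sided Pliss-type good times at a single instant do not give the tempered bounds that membership in $\Lambda_k(\chi,\varepsilon)$ demands at every iterate $f^m(y)$. For those, the good times must be dense enough that the gap from any time $m$ to the nearest good time is at most $\varepsilon|m|$ plus a constant.

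Notice also that Ruelle's inequality and the Pliss lemma alone already give a definite proportion for each bundle separately. If that were your conclusion, the $C^\infty$ exponent continuity would be doing no work. What exponent convergence actually buys is the following. The lifts of $\nu_n$ to $\mathbb{P}TM$ converge to the lift of $\mu$ carried by $E^u$, and likewise for $f^{-1}$ and $E^s$. So for large $N$, the continuous function $\frac{1}{N}\log\|Df^N|_E\|$ exceeds $\chi$ by a fixed margin on a set of lifted $\nu_n$-mass at least $1-b$.

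A rising-sun/Pliss argument then makes the density of bad times $O(b)$ for both bundles. Taking $b$ small relative to $\varepsilon$ also handles the tempered gap condition. In other words, you need $\nu_n(\Lambda)>1-b$ for a fixed block $\Lambda$ and small $b$, not merely $\nu_n(\Lambda)>\tau$ coming from two separate positive densities. This is exactly the form of the estimate the paper quotes as Theorem~\ref{large measure}. Stated that way, with $\chi$ also chosen below the rate there, your contradiction closes.
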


For $\eta > 0$, we define the following two sets
\begin{align*}
P_{\chi_1,\chi_2}(f, \H, \eta, n) &:= \{ x \in P_{\chi_1,\chi_2}(f, \H, n) : d_{\M}(\mu_{\mathcal{O}(x)}, \mu) > \eta \}; \\[2mm]
P^c_{\chi_1,\chi_2}(f, \H, \eta, n) &:= \{ x \in P_{\chi_1,\chi_2}(f, \H, n) : d_{\M}(\mu_{\mathcal{O}(x)}, \mu) \ls \eta \},
\end{align*}
where $d_{\mathcal{M}}$ denotes the metric on the space of Borel probability measures over $M$, which is compatible with the weak$^*$ topology.

Theorem \ref{exp} follows by combining Proposition \ref{away} and Proposition \ref{close} below. Proposition \ref{away} is proved in \Cref{sec:away from mme}, and Proposition \ref{close} is proved in \Cref{sec:close to mme}.
\begin{proposition}\label{away}
For any $\eta > 0$ and any $\chi_1 > 0 > \chi_2$,
\[
\limsup_{n \to \infty} \frac{\log \sharp P_{\chi_1, \chi_2}(f, \H, \eta, n)}{n} < h_{\top}(f, \H).
\]
\end{proposition}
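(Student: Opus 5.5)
We argue by contradiction, combining an entropy gap with Burguet's periodic expansiveness estimate. Suppose that along a subsequence $n_k\to\infty$ we have $\frac1{n_k}\log\sharp P_{\chi_1,\chi_2}(f,\H,\eta,n_k)\to h\gs h_{\top}(f,\H)$. The goal is to produce an invariant measure on $\H$ with entropy at least $h_{\top}(f,\H)$ that lies at distance at least $\eta$ from $\mu$. This contradicts the uniqueness of the MME carried by $\H$ (\cite[Theorem 2]{BCS}), since every ergodic component of such a measure with entropy $h_{\top}(f,\H)$ would have to equal $\mu$.

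\textbf{Step 1: separation.} We use Burguet's asymptotic periodic expansiveness \cite{Bur}. For $(\chi_1,\chi_2)$-hyperbolic periodic points of a $C^\infty$ surface diffeomorphism, there are a scale $\delta>0$ and a subexponential function $C(n,\gamma)$ with the following property: the number of period-$n$ points of this type in any $(n,\delta)$-Bowen ball is at most $e^{\gamma n}$, for every $\gamma>0$ and $n$ large. The $C^\infty$ hypothesis enters here, through Yomdin-type local entropy control. It follows that $P_{\chi_1,\chi_2}(f,\H,\eta,n_k)$ contains an $(n_k,\delta)$-separated subset $E_k$ with
\[
\sharp E_k\gs e^{-\gamma n_k}\,\sharp P_{\chi_1,\chi_2}(f,\H,\eta,n_k).
\]
Each point of $E_k$ is periodic with orbit measure $\mu_{\mathcal{O}(x)}$ satisfying $d_{\M}(\mu_{\mathcal{O}(x)},\mu)>\eta$.

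\textbf{Step 2: build a measure via Misiurewicz.} Set $\nu_k=\frac1{\sharp E_k}\sum_{x\in E_k}\mu_{\mathcal{O}(x)}$; this is already $f$-invariant. The weak$^*$ distance to $\mu$ is not convex-preserved in general, so we first cover the compact set $\{\nu\in\M(f,\H): d_{\M}(\nu,\mu)\gs\eta\}$ by finitely many small convex neighbourhoods $U_1,\dots,U_N$ that do not contain $\mu$. We refine this cover so that each closure $\overline{U_j}$ still avoids a neighbourhood of $\mu$. By pigeonhole, we pass to a subset of $E_k$ of size at least $\sharp E_k/N$ whose orbit measures all lie in one fixed $U_j$. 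The average $\nu_k$ then lies in $U_j$, and any weak$^*$ limit $\nu$ lies in $\overline{U_j}$, hence $\nu\neq\mu$. Misiurewicz's proof of the variational principle, applied to these separated sets of periodic points, gives
\[
h_\nu(f)\gs \limsup_k \frac1{n_k}\log\sharp E_k\gs h-\gamma.
\]
Here we choose the partition with boundaries of zero $\nu$-measure and diameter less than $\delta$, and use the upper semicontinuity of entropy for $C^\infty$ maps (Newhouse \cite{New89}). This upper semicontinuity also allows the Misiurewicz bound to be replaced by a tail-entropy correction if needed. Since $\gamma$ is arbitrary, $h_\nu(f)\gs h_{\top}(f,\H)$, and since $\H$ is closed and invariant, $\nu\in\M(f,\H)$.

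\textbf{Step 3: conclude.} By ergodic decomposition, almost every ergodic component of $\nu$ has entropy $h_{\top}(f,\H)$, so it is an MME on $\H$ and therefore equals $\mu$. Hence $\nu=\mu$, which contradicts $\nu\in\overline{U_j}$.

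\textbf{Main obstacle.} The hardest step is Step 1, transferring the count of periodic points into a separated set with negligible loss. This is exactly where the uniform exponent gap, Lyapunov exponents outside $[\chi_2,\chi_1]$, is needed, and I would rely directly on Burguet's local periodic packing bound. A secondary subtlety is the convexity issue in Step 2, which the finite convex cover handles.
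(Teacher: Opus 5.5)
Your proposal is correct and is essentially the paper's proof. Both arguments use a finite cover of $\{m\in\M(f,\H): d_{\M}(m,\mu)\gs\eta\}$ by convex weak$^*$ neighbourhoods whose closures stay away from $\mu$, Burguet's asymptotic periodic expansiveness to turn the periodic count into an entropy bound for a weak$^*$ limit $\nu\in\overline{U_j}$, and uniqueness of the MME on $\H$; the paper argues directly and sums over the finitely many $\mathcal{B}_i$, rather than by contradiction and pigeonhole.

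One small point: your $\nu$ depends on $\gamma$ through $\delta$ and $E_k$, so ``since $\gamma$ is arbitrary'' really requires taking a limit of the measures $\nu_\gamma$ in the closed set $\bigcup_j\overline{U_j}$ and using the upper semicontinuity of entropy you cite. The paper avoids this by applying \cite[Lemma 2.3]{Bur} to the uniform measures on the whole periodic set, which gives the bound $h_\nu(f)+g^*_{\mathcal{P}}=h_\nu(f)$ for a single limit $\nu$ independent of $\delta$.
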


\begin{proposition}\label{close}
There exists $\chi_2 < 0$ such that for any $\chi_1 \in (0, h_{\top}(f, \H)),$ there exists $\eta > 0$ with
$$\lim_{n \to \infty} \frac{\sharp P^c_{\chi_1,\chi_2}(f, \H, \eta, n)}{\e^{n h_{\top}(f, \H)}} = 1.$$

\end{proposition}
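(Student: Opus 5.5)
We must prove Proposition \ref{close}: there is a uniform $\chi_2<0$ such that for every $\chi_1\in(0,h_{\top}(f,\H))$ some $\eta>0$ gives $\sharp P^c_{\chi_1,\chi_2}(f,\H,\eta,n)\sim \e^{nh}$, where $h=h_{\top}(f,\H)$ and $l(\H)=1$. The plan is to prove the upper bound $\limsup \sharp P^c/\e^{nh}\ls 1$ symbolically, and to obtain the lower bound $\liminf\gs 1$ from a localized version of Buzzi's injective-coding count \cite[Theorem 1.2]{buzzi2020}. For the lower bound we combine this count with Proposition \ref{away}: periodic points of the irreducible component attached to $\mathcal{X}$ number at least $(1-o(1))\e^{nh}$, and those with orbit measures farther than $\eta$ from $\mu$ are exponentially fewer. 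We also check that the coded periodic points are $(\chi_1,\chi_2)$-hyperbolic. Since their unstable exponent is close to $\lambda^u(\mu)\gs h>\chi_1$ by Ruelle's inequality, and the stable exponent is close to $\lambda^s(\mu)\ls -h$, this holds once $\chi_2>-h$ and $\eta$ is small.

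\textbf{Choice of $\chi_2$ and localization.} By Theorem \ref{spr h x}, $\H$ is SPR with some $\chi>0$. Fix $\varepsilon$ small, and take the Pesin block $\Lambda$ and constants $h_0<h$, $\tau>0$ from Definition \ref{SPR}. Set $\chi_2:=-\min(\chi,h)/2$, which depends only on $\H$. For $\chi_1<h$ and $\eta$ small, every orbit measure $\mu_{\mathcal O}$ with $d_{\M}(\mu_{\mathcal O},\mu)\ls\eta$ has entropy... Here the difficulty is that a periodic orbit has zero entropy, so SPR does not apply directly. Instead we use the large-Pesin-block estimate for saddles with exponents outside $[\chi_2,\chi_1]$: such orbits admit Pesin blocks of controlled size in which most points of the orbit are $(\min(\chi_1,|\chi_2|),\varepsilon)$-regular, via a Pliss-lemma argument. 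Combined with weak$^*$ closeness to $\mu$ and upper semicontinuity of $\nu\mapsto\nu(\Lambda')$ on compact $\Lambda'$, this shows that a fixed proportion of each such orbit lies in a fixed compact block $\Lambda'\supset\Lambda$. Uniform local stable and unstable manifolds on $\Lambda'$ and on the $\mu$-typical points near them then give transverse intersections, so every such orbit is homoclinically related to $\mu$ and lies in $\mathcal{X}$.

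\textbf{Coding and the magic word.} Take the irreducible SPR countable Markov shift $(\Sigma,\sigma)$ with Hölder coding $\pi$ from \cite{BCS,SPR2025}, restricted to the component lifting $\mathcal X$, and pass to the Bowen quotient so that each relevant periodic orbit has a periodic lift that recurs to a finite set of states over $\Lambda'$. Then choose a magic word $w$ such that the quotient map is injective on periodic points whose lift contains $w$. We split $P^c$ into two families. For orbits whose lift avoids $w$, we aim for a bound $\ls C\e^{n(h-\delta)}$. This combines three ingredients: the subshift avoiding $w$ has Gurevich entropy $<h$, since the SPR shift has a unique MME of full support; lifts recurring to the finite set over $\Lambda'$ are counted by first-return words; and Burguet's local periodic packing bound \cite{Bur} controls the multiplicity from the finite-to-one coding, via an $(n,\epsilon)$-separated set argument.

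\textbf{Zeta function count.} For orbits seeing $w$, we induce on the cylinder $[w]$. This gives a countable full shift on first-return words $a$, each with return time $r(a)$. The number of periodic points of period $n$ through $[w]$ is then given by a renewal series with generating function $F(z)=\sum_a z^{r(a)}$, and the relevant zeta function is $\zeta(z)=1/(1-F(z))$ up to an analytic, nonvanishing correction. SPR yields $F(\e^{-h})=1$ with $F$ analytic on a disc of radius larger than $\e^{-h}$. Mixing, that is $\gcd\{r(a)\}=1$, rules out any other zero on $|z|=\e^{-h}$, and $F'(\e^{-h})<\infty$ makes the pole simple. A residue computation then gives count $=\e^{nh}+O(\e^{n(h-\delta)})$. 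Injectivity on this family converts the symbolic count into a count of points in $M$. Adding the exponentially small non-magic family yields $\limsup\ls1$.

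The main obstacle is the non-magic entropy gap. Lifts are not unique and periodic points need not lift to periodic sequences with controlled multiplicity, so the argument will hinge on combining Burguet's packing bound with SPR first-return tails.
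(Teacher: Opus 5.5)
Your localization step fails, and the failure comes from your choice $\chi_2=-\min(\chi,h)/2$. By Ruelle's inequality the stable exponent $t_2$ of $\mu$ satisfies $t_2\ls -h$, so your threshold lies at least $h/2$ above $t_2$. Hence $P^c_{\chi_1,\chi_2}(f,\H,\eta,n)$ may contain orbits that are weak$^*$ close to $\mu$ but have lost part of their stable exponent. Weak$^*$ closeness only gives $\lambda_2(\mu_{\mathcal O},f)>t_2-\delta$, by lower semicontinuity, and nothing keeps $\lambda_2(\mu_{\mathcal O},f)$ near $t_2$.

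For such orbits, your argument that a fixed proportion lies in a fixed compact block $\Lambda'$ does not work, for two reasons. First, the Pliss lemma gives hyperbolic times separately for forward contraction along $E^s$ and for backward contraction along $E^u$. Each set has density only of order $(|\lambda|-\chi)/(C_f-\chi)$, where $C_f=\max_{x\in M}\max\{\log\|D_xf\|,\log\|D_xf^{-1}\|\}$. This is a small fraction, not ``most points'', and nothing forces the two sets to intersect, since their densities would have to add up to more than $1$. Second, for compact $\Lambda'$ the map $\nu\mapsto\nu(\Lambda')$ is upper semicontinuous. So weak$^*$ closeness to $\mu$ bounds $\mu_{\mathcal O}(\Lambda')$ from above, never from below. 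Everything downstream rests on this step: the homoclinic relation with $\mu$, the existence of lifts to $\Sigma$, and the finitely many states $R_1,\dots,R_\kappa$ at which lifts of orbit points in the block start and end. Those states are what let you count non-magic orbits by first-return words.

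The missing idea is to tie $\chi_2$ to the exponents of $\mu$. The paper takes $\chi_2=t_2+\delta$, with $\delta$ supplied by the large-Pesin-block theorem (Theorem \ref{large measure}, with $b=\frac14$). Every $x\in P^c_{\chi_1,\chi_2}(f,\H,\eta_0,n)$ then satisfies $\lambda_2(\mu_{\mathcal O(x)},f)<t_2+\delta$. Together with lower semicontinuity of $\lambda_2$ and continuity of $\lambda_1+\lambda_2$, this rules out loss of exponent, and the theorem gives $\mu_{\mathcal O(x)}(\Lambda)>\frac34$ for one fixed block $\Lambda$. Semicontinuity is used only in the valid direction: for the open $\gamma$-neighbourhood $U$ of $\operatorname{supp}(\mu|_\Lambda)$, to find an orbit point in $U\cap\Lambda$ near $\mu$-typical points of $\Lambda$. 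Uniform local manifolds on $\Lambda$ then give the homoclinic relation (Proposition \ref{homoclinic}).

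Once localization is in place, your remaining steps follow the paper's structure. Your direct renewal computation, a residue at the simple zero $z=\e^{-h}$ of $1-F$, is a more economical route than the paper's von Mangoldt and Stolz--Ces\`aro argument. Note, however, that the exponent check in your lower bound relies on the same false continuity. Closeness to $\mu$ yields only $\lambda_1(\mu_{\mathcal O},f)<t_1+\delta$ and $\lambda_2(\mu_{\mathcal O},f)>t_2-\delta$, which is the wrong direction for proving $(\chi_1,\chi_2)$-hyperbolicity, so that step needs a separate justification.
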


\section{Counting orbits away from the MME}\label{sec:away from mme}

We first show that periodic orbits whose associated measures are away from $\mu$ do not contribute to the main asymptotic count.

\begin{proof}[Proof of \Cref{away}]
Define the $n$-dynamical ball at $x\in M$ with radius $\delta>0$ by
\[
B_n(x, \delta) = \big\{ y \in M : \td(f^i x, f^i y) < \delta,\; 0 \ls i \ls n-1 \big\},
\]
and denote $B_{\infty}(x, \delta) = \displaystyle\bigcap_{n \gs 1} B_n(x, \delta)$.
For any subset $\mathcal{P}\subset M$, set
\[
\mathcal{P}_n = \big\{ x \in \mathcal{P} : f^n(x) = x \big\}.
\]
Further define
\begin{align*}
g_{\mathcal{P}}(\delta) &= \limsup_{n \to \infty} \frac{1}{n} \sup_{x \in M} \log \sharp\big(\mathcal{P}_n \cap B_n(x, \delta)\big), \\[2mm]
g^*_{\mathcal{P}} &= \lim_{\delta \to 0} g_{\mathcal{P}}(\delta).
\end{align*}

For $C^\infty$ surface diffeomorphisms, let $\mathcal{P}$ be the set of saddle periodic points whose Lyapunov exponents lie outside the interval $[-\chi, \chi]$ for arbitrary $\chi>0$. Burguet \cite{Bur} established the asymptotically periodic expansive property:
\[
g^*_{\mathcal{P}} = 0.
\]

{Set
\[
K_\eta:=\big\{m\in\M(f,\H):d_{\M}(m,\mu)\gs\eta\big\}.
\]
By compactness, choose finitely many weak$^*$-open convex neighborhoods $\mathcal{B}_1,\dots,\mathcal{B}_l$ covering $K_\eta$ such that
\[
\overline{\mathcal{B}}_i\cap\big\{m\in\M(f,\H):d_{\M}(m,\mu)<\eta/2\big\}=\emptyset
\]
for every $1\ls i\ls l$.} Let
\[
\mathcal{P}_{\chi_1,\chi_2, \mathcal{B}_i}
= \bigcup_{n} P_{\chi_1, \chi_2}(f, \H, \mathcal{B}_i, \eta, n),
\]
where
\[
P_{\chi_1, \chi_2}(f, \H, \mathcal{B}_i, \eta, n)
=\big\{x\in P_{\chi_1, \chi_2}(f, \H, \eta, n):\, \mu_{\mathcal{O}(x)} \in \mathcal{B}_i\big\}.
\]
It holds that
\[
g^*_{\mathcal{P}_{\chi_1, \chi_2, \mathcal{B}_i}}=0.
\]
For each fixed $\mathcal{B}_i$, choose a subsequence $\{n_k\}$ achieving the limit superior:
\[
\lim_{k \to \infty} \frac{\log \sharp P_{\chi_1, \chi_2}(f, \H, \mathcal{B}_i, \eta, n_k)}{n_k}
= \limsup_{n \to \infty} \frac{\log \sharp P_{\chi_1, \chi_2}(f, \H, \mathcal{B}_i, \eta, n)}{n}.
\]
Define empirical measures
\[
\nu_{n_k} := \frac{1}{\sharp P_{\chi_1, \chi_2}(f, \H, \mathcal{B}_i, \eta, n_k)}
\sum_{x \in P_{\chi_1, \chi_2}(f, \H, \mathcal{B}_i, \eta, n_k)} \delta_x,
\]
which admit a convergent subsequence to some $f$-invariant Borel probability measure $\nu$.
Applying \cite[Lemma 2.3]{Bur}, we obtain
\begin{equation}\label{small}
\lim_{k \to \infty} \frac{\log \sharp P_{\chi_1, \chi_2}(f, \H, \mathcal{B}_i, \eta, n_k)}{n_k}
\ls h_{\nu}(f) + g^*_{\mathcal{P}_{\chi_1,\chi_2,\mathcal{B}_i}}
= h_{\nu}(f).
\end{equation}
All periodic orbit measures induced by points in $P_{\chi_1, \chi_2}(f, \H, \mathcal{B}_i, \eta, n_k)$ belong to $\mathcal{B}_i$, so the limit measure satisfies $\nu\in \overline{\mathcal{B}}_i$, which implies
\[
d_{\M}(\nu, \mu) \gs  \frac{\eta}{2}.
\]
By the uniqueness of measure of maximal entropy (MME) within $\H$,
\[
h_{\nu}(f) < h_{\mu}(f) = h_{\top}(f, \H).
\]
Combining with \eqref{small}, we have
\[
\limsup_{n \to \infty} \frac{\log \sharp P_{\chi_1, \chi_2}(f, \H, \mathcal{B}_i, \eta, n)}{n} < h_{\mu}(f).
\]
Consequently,
\[
\begin{aligned}
\limsup_{n \to \infty} \frac{\log \sharp P_{\chi_1, \chi_2}(f, \H, \eta, n)}{n}
&\ls \limsup_{n \to \infty} \frac{1}{n}\log \Big(\displaystyle\sum_{i=1}^l \sharp P_{\chi_1, \chi_2}(f, \H, \mathcal{B}_i, \eta, n)\Big) \\[2mm]
&< h_{\mu}(f).
\end{aligned}
\]
The proof is completed.
\end{proof}

\section{Counting orbits close to the MME}\label{sec:close to mme}

To control the growth rate of the periodic points from above, we apply the hyperbolic estimates of uniform size to the symbolic codings for surface diffeomorphisms. Sarig \cite{Sarig13} constructed a countable Markov partition $\mathcal{R}$ of $ M $ and developed a corresponding symbolic dynamics. Later, Buzzi, Crovisier, and Sarig \cite{BCS} and Buzzi \cite{buzzi2020} provided refined versions of these codings, producing remarkable quantitative results for counting the number of measures of maximal entropy and periodic points.

\subsection{Symbolic dynamics}

Let $\mathcal{G}$  be a directed graph with vertex set $\mathcal{V}$  and edge set $\mathcal{E} \subset \mathcal{V} \times \mathcal{V}$. If $(V_1, V_2) \in \mathcal{E}$, we write $V_1 \to V_2$. The associated topological Markov shift is

$$\Sigma := \Sigma(\mathcal{G}) = \{ (V_i)_{i \in \mathbb{Z}} \in \mathcal{V}^{\mathbb{Z}} : V_i \to V_{i+1} \text{ for all } i \in \mathbb{Z} \}. $$
The dynamic is given by the left shift map $\sigma: \Sigma \to \Sigma$, defined by $\sigma: (v_i)_{i \in \mathbb{Z}} \mapsto (v_{i+1})_{i \in \mathbb{Z}}$, on the metric space $(\Sigma, d)$  where

$$d(\underline{v}, \underline{u}) := \exp[-\inf\{ |i| : v_i \ne u_i \}].$$

Since every vertex in the graph $\mathcal G$ constructed in \cite{Sarig13} has a finite degree, $\Sigma$ is locally compact. Moreover, $\mathcal{G}$ is irreducible, meaning that for any two vertices $V_1, V_2$ there exists a path from $V_1$ to $V_2$. The regular part of $ \Sigma $ is
$$\Sigma^{\#} := \{ (v_i) \in \Sigma : \exists\, v, w \in \mathcal{V},\, \exists\, n_k, m_k \uparrow \infty \text{ s.t. } v_{n_k} = v \text{ and } v_{-m_k} = w \},$$
which has full measure for every $\sigma$-invariant probability measure on $\Sigma$.

\subsubsection{SPR property}

For any finite sequence $a_0, a_1, \dots, a_k \in \V$, we define the cylinder set $[a_0 a_1 \cdots a_k]$ by
\[
{[a_0 a_1 \cdots a_k] = \left\{ x = (x_i)_{i\in\mathbb Z} \in \Sigma \,:\, x_i = a_i \text{ for all } 0 \ls i \ls k \right\}.}
\]
For every function $\phi: \Sigma \to \R$, denote the oscillation modulus
\[
V_n(\phi) = \sup\left\{ |\phi(x) - \phi(y)| \,:\, x_i = y_i \text{ for all } 0 \ls i \ls n-1 \right\}.
\]
A function $\phi: \Sigma \to \R$ is called \emph{locally H\"older continuous with parameter $\rho \in (0,1)$}, or simply locally H\"older continuous, if there exists a constant $C > 0$ such that
\[
V_n(\phi) \ls C \rho^n \quad \text{for all } n \gs 1.
\]
For any locally H\"older continuous potential $\phi: \Sigma \to \R$ and any fixed vertex $R \in \V$, we define the periodic orbit sums
\begin{align*}
\mathrm{F}_n([R], \phi, \sigma) &= \sum_{\substack{x \in [R] \\ \sigma^n(x) = x}} e^{\phi_n(x)}, \\[2mm]
\mathrm{F}^*_n([R], \phi, \sigma) &= \sum_{\substack{x \in [R] \\ \sigma^n(x) = x, \, H_R(x) = n}} e^{\phi_n(x)},
\end{align*}
where $\phi_n(x) = \sum_{i=0}^{n-1} \phi(\sigma^i x)$ denotes the $n$-th ergodic sum, and the return time function $H_R: \Sigma \to \N \cup \{\infty\}$ is given by
\[
H_R(x) = \mathbf{1}_{[R]}(x) \cdot \min\left\{ n \gs 1 \,:\, \sigma^n(x) \in [R] \right\}.
\]
Here $\mathbf{1}_{[R]}$ is the indicator function of the cylinder $[R]$.

The Gurevich pressure of $\phi$ is defined (\cite{OS, Gurevic}) as
\[
P_G(\Sigma, \phi) = \limsup_{n \to \infty} \frac{1}{n} \log \mathrm{F}_n([R], \phi, \sigma).
\]
This quantity is well-defined when $\sup_{\Sigma} \phi < \infty$, and the $\limsup$ reduces to a limit when $(\Sigma, \sigma)$ is topologically mixing. Moreover, it satisfies the variational principle \cite[Theorem 3]{OS}:
\[
P_G(\Sigma, \phi) = \sup\left\{ h_{\nu}(\sigma) + \int_{\Sigma} \phi\, \td \nu \,:\, \nu \in \M(\Sigma, \sigma) \right\},
\]
where $\M(\Sigma, \sigma)$ denotes the space of the $\sigma$-invariant Borel probability measures on $\Sigma$, and $h_{\nu}(\sigma)$ is the measure-theoretic entropy of $\nu$.
A measure $\nu_0 \in \M(\Sigma, \sigma)$ is called an \emph{equilibrium state} for $\phi$ if it attains the supremum in the variational principle, i.e.,
\[
h_{\nu_0}(\sigma) + \int_{\Sigma} \phi\, \td \nu_0 = P_G(\Sigma, \phi).
\]
When $\phi\equiv0$,  we have $h_{\top} (\sigma,\Sigma)=P_{G}(\Sigma, 0)$.

\begin{definition}[Sarig \cite{OS.1}]
\label{def:recurrence}
Let $\phi: \Sigma \to \R$ be a locally
H\"older continuous potential with $\sup \phi < \infty$.
\begin{enumerate}
\item $\phi$ is called \emph{positive recurrent} if and only if
\[
\sum_{n=1}^\infty e^{-n P_G(\Sigma, \phi)} \mathrm{F}_n([R], \phi, \sigma) = \infty
\quad \text{and} \quad
\sum_{n=1}^\infty n e^{-n P_G(\Sigma, \phi)} \mathrm{F}^*_n([R], \phi, \sigma) < \infty.
\]
\item $\phi$ is called \emph{strongly positive recurrent (SPR)} if and only if
\[
\limsup_{n \to \infty} \frac{1}{n} \log \mathrm{F}^*_n([R], \phi, \sigma) < P_G(\Sigma, \phi).
\]
In particular, the shift space $\Sigma$ itself is said to be SPR if the zero potential $\phi \equiv 0$ is SPR. 
\end{enumerate}
\end{definition}

\begin{proposition}[Theorem 7.4 of \cite{SPR2025}, and   \cite{Todd22entropyofTMS}]\label{spr e}
  The  following are equivalent:
  \begin{itemize}\item[(1)]$(\Sigma, \sigma)$ is SPR ;
  \item[(2)] There are a finite union $U$ of cylinders of $\Sigma$, $\tau\in (0,1)$, and $h_0<h_{\top}(\sigma,\Sigma)$ such that
  $$\forall\,\nu\in \mathcal{M}_{\erg}(\sigma, \Sigma): \,h_{\nu}(\sigma)>h_0 \Rightarrow \nu(U)>\tau.$$
  \item[(3)] There exists an MME $\mu$, and for any sequence $\{\mu_n\}$ of measures in $\mathcal{M}_{\erg}(\sigma, \Sigma)$, if 
  $\lim_{n\to \infty} h_{\mu_n}(\sigma)=h_{\top}(\sigma,\Sigma) $ then $\mu_n$ converges weak$^*$ to $\mu$. 
\end{itemize}
\end{proposition}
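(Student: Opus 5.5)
The plan is to prove the cycle $(1)\Rightarrow(3)\Rightarrow(2)\Rightarrow(1)$. Throughout, $\Sigma$ is irreducible and locally compact with $h_{\top}(\sigma,\Sigma)<\infty$, as for the codings of \cite{Sarig13}. The central intermediate notion is the \emph{entropy at infinity}
\[
h_\infty(\Sigma):=\inf_{F\subset\V\ \text{finite}}\ \sup\Bigl\{\limsup_k h_{\nu_k}(\sigma)\ :\ \nu_k\in\M_{\erg}(\sigma,\Sigma),\ \nu_k\to 0 \text{ vaguely}\Bigr\}
\]
(equivalently defined through loops avoiding a finite set of vertices). I would use the known characterisation that an irreducible shift is SPR if and only if $h_\infty(\Sigma)<h_{\top}(\sigma,\Sigma)$. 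This is in the spirit of Gurevich–Zargaryan and Ruette, and is the content of \cite{Todd22entropyofTMS} and \cite[Section 7]{SPR2025}; the plan is organised so that this is the only nontrivial input.

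For $(1)\Rightarrow(3)$: SPR implies positive recurrence, so by Sarig's generalised Ruelle–Perron–Frobenius theorem \cite{OS.1} there is an MME $\mu$, and it is unique by irreducibility. Let $\mu_n$ be ergodic with $h_{\mu_n}\to h_{\top}$, and pass to a vague limit $\mu_n\to t\,\nu$ with $t\in[0,1]$ and $\nu$ a probability measure. SPR gives $h_\infty<h_{\top}$, and the standard upper semicontinuity bound on countable Markov shifts (e.g.\ from Iommi–Jordan–Todd) reads
\[
\limsup h_{\mu_n}\ \ls\ t\,h_\nu+(1-t)h_\infty .
\]
Since the left side equals $h_{\top}$ and $h_\infty<h_{\top}$, this forces $t=1$ and $h_\nu=h_{\top}$. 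Hence $\nu=\mu$ by uniqueness, and since there is no loss of mass the convergence is weak$^*$.

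For $(3)\Rightarrow(2)$ I argue by contradiction. Enumerate $\V=\{v_1,v_2,\dots\}$ and set $U_k=\bigcup_{j\ls k}[v_j]$. If (2) fails, then for each $k$, taking $U=U_k$, $\tau=1/k$ and $h_0=h_{\top}-1/k$, there is an ergodic $\nu_k$ with $h_{\nu_k}>h_{\top}-1/k$ and $\nu_k(U_k)\ls 1/k$. By (3), $\nu_k\to\mu$ weak$^*$. Cylinders are clopen, so $\nu_k([v_1])\to\mu([v_1])$. But $\mu$ is fully supported by irreducibility, so $\mu([v_1])>0$, while $\nu_k([v_1])\ls\nu_k(U_k)\ls 1/k\to0$. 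This is a contradiction.

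For $(2)\Rightarrow(1)$, which I expect to be the main obstacle, I would show that (2) forces $h_\infty<h_{\top}$ and then invoke the characterisation above. Suppose instead $h_\infty=h_{\top}$. Take $U$, $\tau$ and $h_0$ from (2), and let $F$ be the finite set of vertices defining $U$. By the definition of $h_\infty$, there are ergodic measures of entropy above $h_0$ whose mass on any fixed finite union of cylinders, in particular on $U$, is arbitrarily small. Choosing one with mass below $\tau$ contradicts (2). The delicate part is the passage from $h_\infty<h_{\top}$ to the first-return estimate
\[
\limsup_n\tfrac1n\log \mathrm F^*_n([R],0,\sigma)<h_{\top}.
\]
For this I would split each first-return loop at $R$ into excursions outside a large finite set $F'$ and pieces inside $F'$. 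Excursions outside $F'$ grow at rate at most about $h_\infty$. Pieces inside $F'$ avoid $R$, so they live in a finite irreducible subgraph omitting $R$, whose entropy is strictly less than $h_{\top}$ by Perron–Frobenius strict monotonicity. A renewal/concatenation count then gives the strict gap. If this combinatorial count becomes too lengthy, I would simply cite \cite[Theorem 7.4]{SPR2025} for this step.
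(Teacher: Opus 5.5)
The paper gives no argument for this proposition; it simply quotes \cite[Theorem 7.4]{SPR2025} and \cite{Todd22entropyofTMS}. Your cycle $(1)\Rightarrow(3)\Rightarrow(2)\Rightarrow(1)$ unpacks that citation into two inputs: the Iommi--Todd--Velozo inequality $\limsup_n h_{\mu_n}\ls t\,h_\nu+(1-t)h_\infty$ (attributed by you to Iommi--Jordan--Todd, but it is due to Iommi--Todd--Velozo), and the characterisation SPR $\Leftrightarrow h_\infty<h_{\top}(\sigma,\Sigma)$. The steps you carry out by hand are correct:
\begin{itemize}
\item $(1)\Rightarrow(3)$ works once you note that every subsequence has a further subsequence converging to $\mu$.
\item $(3)\Rightarrow(2)$ is fine; already $U=[v_1]$ with $\tau=\mu([v_1])/2$ works.
\item $(2)\Rightarrow h_\infty<h_{\top}(\sigma,\Sigma)$ is fine when $h_\infty$ is defined through ergodic measures. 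If it is defined through invariant measures, pass to ergodic components.
\end{itemize}
As written, the $\inf_F$ in your definition of $h_\infty$ is vacuous, because the inner set does not depend on $F$.

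The one step you try to prove rather than cite does not work as described. In your renewal count for $h_\infty<h_{\top}\Rightarrow$ SPR, a first-return loop of length $n$ can contain order $n$ alternating blocks. Concatenations of two block families, each growing at a rate below $h_{\top}$, can grow at rate $h_{\top}$ or more: one block of each length in each family already gives $2^{n-1}$ concatenations. So knowing the two rates separately yields no gap. A renewal proof would need the spectral radius of the generating matrix of the graph reduced to $F'$, evaluated at $\e^{-h_{\top}}$, together with convergence of the excursion series slightly beyond $\e^{-h_{\top}}$. It is not enough to use the entropy of $F'\setminus\{R\}$.

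It is simpler to reuse your inequality. Suppose $\Sigma$ is not SPR. Choose $n$ with $\frac1n\log\mathrm F^*_n([R],0,\sigma)\to h_{\top}$, and let $\nu_n$ be the ergodic measure equidistributed over bi-infinite concatenations of first-return loops at $R$ of length exactly $n$. Then $h_{\nu_n}=\frac1n\log\mathrm F^*_n([R],0,\sigma)$ and $\nu_n([R])=1/n$. For any vague limit $t\nu$, the inequality together with $h_\infty<h_{\top}$ forces $t=1$ and $h_\nu=h_{\top}$. So $\nu$ is an MME with $\nu([R])=0$, which contradicts the full support of MMEs on irreducible shifts.

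Likewise, $(1)\Rightarrow(2)$, and hence SPR $\Rightarrow h_\infty<h_{\top}$, follows from Abramov--Kac on $[R]$. Take $s$ strictly between $\limsup\frac1n\log\mathrm F^*_n([R],0,\sigma)$ and $h_{\top}$. Then every ergodic $\nu$ with $\nu([R])=\alpha>0$ satisfies $h_\nu\ls s+\alpha\log\sum_n\mathrm F^*_n([R],0,\sigma)\e^{-sn}$. If $\alpha=0$, then $h_\nu\ls\limsup\frac1n\log\mathrm F^*_n([R],0,\sigma)$. With these two repairs, the semicontinuity inequality becomes the only external input.
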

\subsubsection{Symbolic coding} 

We state an irreducible version of the coding, concerning the homoclinic equivalence relation of measures.  Let $\mu$ be the MME of $f$ on $\H$, and $t_1, t_2$ be its  positive and  negative Lyapunov exponents, respectively. Recall that $\mathcal X$ is the  Borel homoclinic class carrying $\mu$. {Let $\chi_0>0$ be a hyperbolicity rate in the SPR property of $\H$. We choose the coding rate $\chi$ below $\chi_0$.}  

\begin{theorem}[Theorem 3.1 of \cite{BCS}, Theorem 10.5 of \cite{SPR2025}] \label{the 5}
Let $f$ be a $C^{\infty}$  diffeomorphism on a  closed surface $M$ with positive topological entropy and $\H$ be a homoclinic class. For any $0<\chi< \min\{t_1, |t_2|\}$ and  sufficiently small $\chi'\in (0,\chi) $ there exist a locally compact countable Markov shift $(\Sigma,\sigma)$ and a H\"older continuous map $\pi: \Sigma \to M$ such that $\pi \circ \sigma = f \circ \pi$ and:
\begin{itemize}
\item[(1)] $\Sigma$ is irreducible,  strongly positive recurrent (SPR), and $h_{\top}(\sigma, \Sigma)=h_{\top}(f,\H)$. 
\item[(2)] $\pi: \Sigma^{\#} \to M$ is finite-to-one. More precisely, if $x = \pi(\underline{x})$  where $x_i = a$ for infinitely many $i < 0$ and $x_i = b$ for infinitely many $i > 0$, then $\sharp \pi^{-1}(x)$ is bounded by a constant $C(a, b)$.
\item[(3)] {$\nu(\pi(\Sigma^{\#}))=1$ for every $\chi$-hyperbolic measure $\nu \in \mathcal{M}(f,\mathcal X)$. Moreover, there exists an ergodic measure $\hat{\nu}$ on $\Sigma$ with $\pi_{*}(\hat{\nu}) = \nu$. Conversely, if $\hat{\nu}$ is an ergodic measure on $\Sigma$, then $\pi_{*}(\hat{\nu})$ is $f$-ergodic, $\chi'$-hyperbolic, homoclinically related to $\mu$, and $h_{\hat{\nu}}(\sigma) = h_{\pi_{*}(\hat{\nu})}(f)$.}
\end{itemize}
\end{theorem}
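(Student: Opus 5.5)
The statement is a repackaging of known coding results, so I would assemble it from Sarig's construction \cite{Sarig13}, as refined in \cite{BCS} and \cite{SPR2025}, and then restrict to one irreducible component.

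\textbf{Step 1: the global coding.} First fix $0<\chi'<\chi<\min\{t_1,|t_2|\}$, with $\chi'$ small enough that Pesin theory with parameters $(\chi',\varepsilon)$, $\varepsilon\ll\chi'$, applies. Sarig's construction gives a locally compact countable Markov shift $(\Sigma_0,\sigma)$ and a H\"older map $\pi_0$ with $\pi_0\circ\sigma=f\circ\pi_0$. Its vertices are $\varepsilon$-double charts, and every vertex has finite degree.
\begin{itemize}
\item Every $\chi$-hyperbolic ergodic measure $\nu$ satisfies $\nu(\pi_0(\Sigma_0^\#))=1$. Each $\nu$-typical point of $\mathrm{NUH}'_\chi(f)$ is shadowed by a regular chain of charts, and a measurable selection of these chains gives an ergodic lift $\hat\nu$.
\item Conversely, any point of $\pi_0(\Sigma_0^\#)$ lies in a $(\chi',\varepsilon)$-Pesin block. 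Hence every projected ergodic measure is $\chi'$-hyperbolic.
\item The finiteness bound $C(a,b)$ in (2) comes from local finiteness of the affiliation relation of the Markov partition $\mathcal R$. If $x_i=a$ and $x_i=b$ infinitely often in the past and future respectively, only boundedly many symbols can occur at those times, and this bounds the number of preimages.
\end{itemize}
Since $\pi_0$ is finite-to-one on $\Sigma_0^\#$, it preserves metric entropy: $h_{\hat\nu}(\sigma)=h_{\pi_*\hat\nu}(f)$.

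\textbf{Step 2: selecting the irreducible component.} Decompose $\Sigma_0$ into irreducible components (maximal strongly connected subgraphs). An ergodic measure on $\Sigma_0$ is carried by exactly one component. Following \cite[Theorem 3.1]{BCS}, two ergodic measures lift to the same component if and only if their projections are homoclinically related. One direction holds because chains in one component yield transverse intersections of the stable and unstable manifolds attached to the charts. The other holds because a transverse homoclinic intersection between Pesin blocks can be realized by admissible chains connecting the corresponding vertices.

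Let $\Sigma$ be the component carrying a lift of $\mu$. Then every ergodic measure on $\Sigma$ projects to a $\chi'$-hyperbolic measure homoclinically related to $\mu$. Every $\chi$-hyperbolic measure in $\mathcal M(f,\mathcal X)$ has its ergodic lift in $\Sigma$, which gives (3). For the entropy identity in (1), the variational principle for $P_G(\Sigma,0)$ and entropy preservation show that $h_{\top}(\sigma,\Sigma)$ is the supremum of $h_\nu(f)$ over such measures. This supremum is at most $h_{\top}(f,\H)$, and it attains $h_\mu(f)=h_{\top}(f,\H)$ because $\mu$ lifts.

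\textbf{Step 3: SPR of $\Sigma$ (the main obstacle).} I would use the equivalence (1)$\Leftrightarrow$(2) of Proposition~\ref{spr e}, together with Theorem~\ref{spr h x}. The latter gives, for the chosen $\varepsilon$, a $(\chi_0,\varepsilon)$-Pesin block $\Lambda$ and constants $h_0<h_{\top}(f,\H)$, $\tau>0$ such that every ergodic $\nu$ on $\mathcal X$ with $h_\nu>h_0$ satisfies $\nu(\Lambda)>\tau$. The goal is to pull this back to a finite union $U$ of cylinders, with
\[
\hat\nu(U)\gs c\,\nu(\Lambda)
\]
for every ergodic lift $\hat\nu$ of such a $\nu$.

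The difficulty is that a point of $\Lambda$ may be coded by chains whose zeroth symbol is arbitrary. I would handle it as follows.
\begin{itemize}
\item On $\Lambda$ the Pesin charts have size bounded below by compactness. So only finitely many double charts $v$ can be the zeroth symbol of a chain that codes a point of $\Lambda$ while being $\varepsilon$-overlap compatible with the Pesin chart there; this is the key technical claim.
\item Take $U$ to be the union of the cylinders $[v]$ over these finitely many $v$, intersected with $\Sigma$.
\item A lift $\hat\nu$ of $\nu$ satisfies $\hat\nu(\pi^{-1}\Lambda)=\nu(\Lambda)$, and $\pi^{-1}(\Lambda)\cap\Sigma^\#\subset U$ up to a null set. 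Hence $\hat\nu(U)>\tau$ whenever $h_{\hat\nu}=h_\nu>h_0$.
\end{itemize}
I expect the key technical claim to be the delicate part. It needs $\chi<\chi_0$ and $\varepsilon$ small relative to $\chi'$, and it is where \cite[Section 10]{SPR2025} does the real work. Given it, Proposition~\ref{spr e} yields that $\Sigma$ is SPR, completing (1).
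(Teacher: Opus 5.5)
Your proposal is correct and follows the route behind the statement, which the paper does not prove itself but quotes from \cite[Theorem 3.1]{BCS} and \cite[Theorem 10.5]{SPR2025}: Sarig's coding restricted to the irreducible component carrying a lift of $\mu$, with SPR obtained by pulling the SPR Pesin block back to finitely many cylinders (Lemma~\ref{belong}) and then applying Proposition~\ref{spr e}(2). This is exactly the argument the paper later uses to show that the quotient component $Z$ is SPR, and the restriction $\chi<\chi_0$ you flag is also imposed in the paper just before the statement, where the coding rate is chosen below the SPR rate $\chi_0$.
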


\subsection{Bowen quotient}

We further investigate injective coding via the strong positive recurrence property.
Let $(\Xi, \sigma)$ be a countable Markov systems generated by a graph $\mathcal{G}$ with vertex set $\mathcal{A}$. 

\subsubsection{Bowen property}
\begin{definition}
Consider $\pi \colon (\Xi,\sigma) \to (X,T)$ is a semiconjugacy between dynamical systems. We say $\pi$ satisfies the \textit{Bowen property} if  there exists a reflexive, symmetric binary relation $\sim$ on the alphabet $\mathcal{A}$ of $\Xi$ such that for all bi-infinite sequences $x,y\in \Xi$,
\begin{equation}\label{eq:bowen-equiv}
\pi(x) = \pi(y) \iff x_n \sim y_n \quad \text{for every } n\in\mathbb{Z}.
\end{equation}
When the above equivalence holds, we call $x,y$ \textit{Bowen equivalent} and write $x \approx y$.
The relation $\sim$ on $\mathcal{A}$ is referred to as a \textit{Bowen relation} admitted by $\pi$.
{We say $\sim$ is \textit{locally finite} if the set $\{b\in \mathcal{A} : b \sim a\}$ is finite for each symbol $a\in \mathcal{A}$.}
\end{definition}

The semiconjugacy $\pi$ appearing in Theorem \ref{the 5} admits a locally finite Bowen relation.  

\subsubsection{Bowen equivalence on words}

For a bi-infinite sequence $x\in \Xi$ and integers $a \ls b$, we adopt standard segment notation:
\[
x_{[a,b]} = x_a x_{a+1}\cdots x_b \in \mathcal{A}^{b-a+1},\qquad
x_{[a,b)} = x_{[a,b-1]}.
\]
For each integer $n>0$, the set of all admissible length-$n$ words in $\Xi$ is
\[
\mathcal{L}_n(\Xi) := \bigl\{ x_{[0,n)} : x \in \Xi \bigr\},
\]
and the full language of $\Xi$ is the union over all word lengths:
\[
\mathcal{L}(\Xi) := \bigcup_{n>0}\mathcal{L}_n(\Xi).
\]
For any word $w\in \mathcal{L}_n(\Xi)$, its length is denoted by $|w|:=n$.
We say a word $w$ \textit{occurs at position }$k\in\mathbb{Z}$ along a sequence $x\in\Xi$ if $x_{[k,k+|w|)} = w$.
Each finite word $w\in\mathcal{L}_n(\Xi)$ defines a base cylinder set:
\[
[w]_\Xi := \bigl\{ y \in \Xi : y_{[0,n)} = w \bigr\}.
\]

We say that a word $w$ occurs i.o. in $x\in \Xi$ or that $x$ sees i.o. $w$ if $w$ occurs infinitely often in both $x_{(-\infty,0]}$ and $x_{[0,\infty)}$.  A sequence $x \in \Xi$ is word recurrent if any word $w$ in $x$ occurs i.o. in $x$. We denote by $\Xi^\mathrm{rec} \subset \Xi$ the set of such sequences. Note that $\Xi^\mathrm{rec}$ carries all invariant probability measures on $\Xi$; in particular, it contains all periodic orbits. Finally, there is an obvious inclusion $\Xi^\mathrm{rec} \subset \Xi^\#$.

{
The Bowen relation $\sim$ induces a reflexive and symmetric relation (also denoted $\sim$) on the language $\mathcal{L}(\Xi^\#)$ through word-wise comparison:
\[
v \sim w \overset{\mathrm{def}}{\iff} |v| = |w| \text{ and } v_i \sim w_i \text{ for all } 0 \ls i < |v|.
\]
}
We work with three nested languages
\[
\mathcal{L}(\Xi^\mathrm{rec}) \subset \mathcal{L}(\Xi^\#) \subset \mathcal{L}(\Xi).
\]
In general, these three sets are distinct; they coincide if and only if the shift $\Xi$ is the disjoint union of irreducible components.

\begin{definition}[Word Degree, Magic Words and Magic Pairs]
Fix a Bowen relation $\sim$. For any word $w\in \mathcal{L}(\Xi)$ and index $0 \ls i < |w|$, define the \textit{degree of $w$ at position $i$} by
\begin{eqnarray*}
\delta_\sim(w,i) :=  \sharp \{v_i : v \in \mathcal{L}(\Xi^\#),\ v \sim w\}, 
\end{eqnarray*}
and let the degree of $w$ 
\begin{eqnarray*}
\delta_\sim(w) := \min_{0\ls i <|w|} \delta_\sim(w,i).
\end{eqnarray*}The \textit{recurrent degree} of the Bowen relation $\sim$ is the infimum over all recurrent admissible words:
\[
\delta_\mathrm{rec}(\sim) := \inf\Bigl\{ \delta_\sim(w) : w \in \mathcal{L}(\Xi^\mathrm{rec}) \Bigr\}.
\]
A word $w\in \mathcal{L}(\Xi^\mathrm{rec})$ attaining this infimum is called a \textit{magic word}; a pair $(w,i)$ realizing the infimum value is termed a \textit{magic couple}.
\end{definition}

Since $\sim$ is reflexive, one immediately has $\delta_\sim(w,i)\gs 1$ for every recurrent word $w$ and index $i$.
If $\delta_\mathrm{rec}(\sim)<\infty$, magic words and magic couples always exist.
In particular, it holds whenever $\sim$ is locally finite.


\subsubsection{Bowen quotient semi-conjugacy.}

We now present a purely combinatorial construction to produce a new semiconjugacy $\pi_N$ from the original Bowen semiconjugacy $\pi$ (see \cite{buzzi2020} for details). The construction encodes information about the $N$-fold preimage structure of points in the base space $X$.

Fix an integer $N\gs 1$. We first define a directed graph $\mathcal{G}_N$ built from the original transition graph $\mathcal{G}$ of $\Xi$:
\begin{itemize}
    \item Let $\mathcal{A}_N$ be the collection of all $N$-element subsets $A\subset \mathcal{A}$ such that every pair of distinct symbols in $A$ is Bowen equivalent under $\sim$.
    \item Let $\mathcal{G}_N$ be the simple directed graph with vertex set $\mathcal{A}_N$. A directed edge $A\to B$ exists in $\mathcal{G}_N$ if and only if there is a bijection $\phi \colon A \to B$ satisfying: for all $a\in A$ and $ b\in B$, $a\to b$ is an edge in the original graph $\mathcal{G}$ if and only if $b=\phi(a)$.
\end{itemize}
Denote by $(\Xi_N, \sigma_N)$ the symbolic systems generated by $\mathcal{G}_N$ with left shift $\sigma_N$.  
\begin{definition}[$N$-th Order Bowen Quotient]
Let $\pi \colon (\Xi^\#,\sigma) \to (X,T)$ be a semiconjugacy admitting a locally finite Bowen relation $\sim$. For any integer $N\gs 1$, the \textit{$N$-th order Bowen quotient} associated to $(\pi,\sim)$ is the triple
\[
\bigl(\pi_N \colon \Xi_N^\# \to X,\ \stackrel{N}{\sim}\bigr)
\]
defined via the following three conditions:
\begin{enumerate}
    \item $\Xi_N^\#$ is the regular recurrent subset of the Markov shift $\Xi_N$ generated by the transition graph $\mathcal{G}_N$;
    \item The map $\pi_N \colon \Xi_N^\# \to X$ satisfies the invariance rule: for any $\hat{x} \in \Xi_N^\#$ and any bi-infinite lift $x\in \Xi^\#$ with $x_n \in \hat{x}_n$ for all $n\in\mathbb{Z}$, we have $\pi_N(\hat{x}) = \pi(x)$;
    \item The quotient Bowen relation $\stackrel{N}{\sim}$ on $\mathcal{A}_N$ is defined by: $A \stackrel{N}{\sim} B$ if and only if $a\sim b$ for every pair $(a,b)\in A\times B$.
\end{enumerate}
\end{definition}

We next introduce the canonical lift map from quotient sequences back to the original shift space $\Xi$.
\begin{lemma}\label{unique path}
    Take any finite or bi-infinite path $\hat{x} = (\hat{x}_n)_{i<n<j}$ on $\mathcal{G}_N$ with index range $-\infty \ls i<0<j \ls\infty$. For each symbol $a\in \hat{x}_0$, there exists a unique path
\[
Q(\hat{x},a) := (x_n^a)_{i<n<j}
\]
on the original graph $\mathcal{G}$ such that $x_0^a = a$ and $x_n^a \in \hat{x}_n$ holds for all indices $i<n<j$.
\end{lemma}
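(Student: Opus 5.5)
The plan is to build the lift $Q(\hat{x},a)$ one step at a time, moving outward from index $0$ in both directions, using the bijections attached to the edges of $\mathcal{G}_N$. The key observation is that the definition of an edge $A\to B$ in $\mathcal{G}_N$ is very rigid. It says that for each $a\in A$ the \emph{only} $b\in B$ with $a\to b$ in $\mathcal{G}$ is $b=\phi(a)$. Symmetrically, since $\phi$ is a bijection, for each $b\in B$ the only $a\in A$ with $a\to b$ is $a=\phi^{-1}(b)$. So each edge of $\mathcal{G}_N$ determines a unique forward successor and a unique backward predecessor inside the neighbouring vertex-sets.

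\textbf{Construction.} Fix $a\in\hat{x}_0$ and set $x^a_0=a$. For $0\ls n<j-1$, let $\phi_n\colon\hat{x}_n\to\hat{x}_{n+1}$ be a bijection witnessing the edge $\hat{x}_n\to\hat{x}_{n+1}$, and define $x^a_{n+1}=\phi_n(x^a_n)$ recursively. For $i+1<n\ls 0$, define $x^a_{n-1}=\phi_{n-1}^{-1}(x^a_n)$ recursively. By construction $x^a_n\in\hat{x}_n$ for all $i<n<j$, and $x^a_n\to x^a_{n+1}$ is an edge of $\mathcal{G}$ for every admissible $n$ (apply the edge property with $b=\phi_n(a)$). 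Hence $(x^a_n)_{i<n<j}$ is a path on $\mathcal{G}$ with the required properties. When $i=-\infty$ or $j=\infty$, the recursion simply runs through all integers in the range, and no limiting argument is needed because each coordinate is determined after finitely many steps.

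\textbf{Uniqueness.} Let $(y_n)_{i<n<j}$ be any path on $\mathcal{G}$ with $y_0=a$ and $y_n\in\hat{x}_n$. I will show $y_n=x^a_n$ by induction on $|n|$. The case $n=0$ holds by assumption. For the forward step, suppose $y_n=x^a_n$. Since $y_n\to y_{n+1}$ in $\mathcal{G}$ with $y_{n+1}\in\hat{x}_{n+1}$, the ``only if'' direction of the edge condition forces $y_{n+1}=\phi_n(y_n)=x^a_{n+1}$. For the backward step, suppose $y_n=x^a_n$. Since $y_{n-1}\to y_n$ with $y_{n-1}\in\hat{x}_{n-1}$, the edge condition gives $y_n=\phi_{n-1}(y_{n-1})$, so $y_{n-1}=\phi_{n-1}^{-1}(x^a_n)=x^a_{n-1}$ by injectivity of $\phi_{n-1}$.

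A remark on well-definedness: the bijection $\phi_n$ is itself uniquely determined by the edge, since $\phi_n(a)$ must be the unique $\mathcal{G}$-successor of $a$ in $\hat{x}_{n+1}$. So the construction involves no arbitrary choices.

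There is no serious obstacle here. The one point needing care is the backward direction, where we must use that $\phi$ is a bijection (not merely a map) so that predecessors are unique. This is exactly what the definition of edges in $\mathcal{G}_N$ provides.
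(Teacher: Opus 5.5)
Your proof is correct: existence and the forward uniqueness step follow from the ``if and only if'' edge condition defining $\mathcal{G}_N$, and the backward uniqueness step correctly uses that each $\phi_{n-1}$ is a bijection. The paper gives no argument of its own and only cites Fact 3.9 of \cite{buzzi2020}; your proposal supplies the routine propagation along the edge bijections that this citation stands for.
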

\begin{proof}
    See Fact 3.9 of \cite{buzzi2020}.
\end{proof}
Fix an arbitrary total ordering on the alphabet $\mathcal{A}$ of $\Xi$. Using the lift $Q$, we define a projection map
\[
q_N \colon \Xi_N \to \Xi,\qquad q_N(\hat{x}) := Q\bigl(\hat{x},\min(\hat{x}_0)\bigr),
\]
where $\min(\hat{x}_0)$ denotes the minimal symbol in the vertex set $\hat{x}_0$ under the chosen total order.
The map $q_N$ is $1$-Lipschitz with respect to the standard shift metric on $\Xi_N$ and $\Xi$, since any central word segment $x_{[-n,n]}$ of $q_N(\hat{x})$ is fully determined by the corresponding segment $\hat{x}_{[-n,n]}$ of $\hat{x}$.
If $\hat{x} \in \Xi_N^\#$, then every lift sequence $x$ satisfying $x_n\in \hat{x}_n$ for all $n\in\mathbb{Z}$ belongs to $\Xi^\#$.

Composing with the original semiconjugacy $\pi$, we obtain a Borel measurable quotient semiconjugacy:
\[
\pi_N \colon (\Xi_N^\#,\sigma_N) \to (X,T),\quad \hat{x} \mapsto \pi\bigl(q_N(\hat{x})\bigr).
\]

We say a bi-infinite quotient sequence $\hat{x} \in \Xi_N^\#$ is \textit{related to a finite word $w\in\mathcal{L}(\Xi)$} if there exist infinitely many integers $p\gs 0$ and infinitely many integers $p \ls 0$ such that
\[
w_i \in \hat{x}_{p+i} \quad \text{for all } 0 \ls i < |w|.
\]
In this situation, we say the point $ \pi_N(\hat{x})$ is related to $w$.

The following definition will be convenient for our study.

\begin{definition}
    A semiconjugacy $\pi:(\Xi,\sigma)\to (X,T)$ is \textit{excellent} for $\sim$ if
\begin{enumerate}
    \item $\sim$ is a locally finite, reflexive and symmetric relation on the alphabet of $\Xi$;
    \item $\pi$ is Bowen with respect to the relation $\sim$;
    \item $\pi$ is Borel and finite-to-one.
\end{enumerate}
\end{definition} 

Throughout this paper, $\binom{p}{q} = \frac{p!}{q!(p-q)!}$ and is zero if
$q>p$.

\begin{theorem}[Theorem 3.3 of Buzzi \cite{buzzi2020}]\label{well-d}Let $\Xi$ be a Markov shift with regular part $\Xi^\#$. Let $\pi:(\Xi^\#,\sigma)\to (X,T)$ be an
excellent semiconjugacy for some Bowen relation $\sim$. Then, for any integer $N\gs 1$,
the Bowen quotient $(\pi_N:\Xi_N^\#\to X,\overset{N}{\sim})$ of order $N$ is well-defined and excellent.
Moreover, the $q_N:\Xi_N\to \Xi$ defined above is a finite-to-one, $1$-Lipschitz map and satisfies:
\begin{enumerate}
    \item if $\Xi$ is locally compact, then so is $\Xi_N$;
    \item {$\pi_N = \pi\circ q_N:\Xi_N^\#\to X$;}
    \item $|\pi_N^{-1}(y)| \ls \binom{|\pi^{-1}(y)|}{N}$ for all $y\in X$ with equality except on a null set;
    \item {$q_N:\Xi_N^\#\to \Xi^\#$ is proper, i.e., for any compact set $K\subset \Xi^\#$, $q_N^{-1}(K)\cap \Xi_N^\#$ is
    compact.}
\end{enumerate}
    
\end{theorem}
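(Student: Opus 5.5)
The plan is to work directly with the graph $\mathcal{G}_N$ and the lifting map $Q$ of Lemma \ref{unique path}, proving the claims in an order where each one feeds the next.

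\textbf{Well-definedness of $\pi_N$ and Bowen property for $\overset{N}{\sim}$.} For $\hat{x}\in\Xi_N^\#$, Lemma \ref{unique path} gives exactly $N$ lifts $Q(\hat{x},a)$, $a\in\hat{x}_0$, one through each symbol, and every lift $x$ with $x_n\in\hat{x}_n$ for all $n$ is one of these. Each coordinate $\hat{x}_n$ is a set of pairwise $\sim$-related symbols. Hence any two lifts satisfy $x_n\sim y_n$ for all $n$, and the Bowen property of $\pi$ gives $\pi(x)=\pi(y)$. Regularity of $\hat{x}$ (symbols $A,B$ repeating in the past and the future) gives regularity of every lift, since the lifts take values in the finite sets $A,B$ at those times. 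So $\pi_N=\pi\circ q_N$ is independent of the choice of lift, which proves (2), and $\pi_N$ is Borel.

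For the Bowen property of $\overset{N}{\sim}$: if $\hat{x}_n\overset{N}{\sim}\hat{y}_n$ for every $n$, then $\min\hat{x}_n\sim\min\hat{y}_n$ along the lifts, so $\pi_N(\hat{x})=\pi_N(\hat{y})$. Conversely, if $\pi_N(\hat{x})=\pi_N(\hat{y})=y$, then every lift of $\hat{x}$ and every lift of $\hat{y}$ lies in $\pi^{-1}(y)$. Pairwise Bowen equivalence inside $\pi^{-1}(y)$ then gives $a\sim b$ for all $a\in\hat{x}_n$ and $b\in\hat{y}_n$. Local finiteness of $\overset{N}{\sim}$ holds because $A\overset{N}{\sim}B$ forces $B$ to be an $N$-subset of the finite set $\{b:b\sim a\}$ for a fixed $a\in A$.

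\textbf{Finite-to-one and the binomial bound (3).} The assignment $\hat{x}\mapsto\{Q(\hat{x},a):a\in\hat{x}_0\}$ is injective, since $\hat{x}_n$ is recovered as the set of $n$-th coordinates of the lifts. Its image is an $N$-element subset of $\pi^{-1}(y)$, because distinct lifts differ at time $0$. This gives $|\pi_N^{-1}(y)|\le\binom{|\pi^{-1}(y)|}{N}$, and in particular $\pi_N$ is finite-to-one, so $\pi_N$ is excellent.

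For equality off a null set, I would show the reverse inclusion for generic $y$. Take $N$ distinct preimages $x^1,\dots,x^N$ of $y$ in $\Xi^\#$ and form $\hat{x}_n=\{x^1_n,\dots,x^N_n\}$. One must check that these sets have $N$ elements for every $n$, and that $\hat{x}_n\to\hat{x}_{n+1}$ is an edge of $\mathcal{G}_N$, i.e.\ that the only $\mathcal{G}$-edges from $\hat{x}_n$ into $\hat{x}_{n+1}$ are the diagonal ones. I expect this to be the main obstacle. I would show that the failure of either condition at some time $n$ forces $y$ into a set of the form ``two distinct regular codings that cross or merge''. By the Bowen property and the local product structure of the coding, such points lie in a countable union of sets that are null for every ergodic measure (the exceptional set in Buzzi's argument), and that is the meaning of ``null set'' I would adopt.

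\textbf{Properties of $q_N$ and local compactness.} The 1-Lipschitz property is immediate, since $q_N(\hat{x})_{[-n,n]}$ is determined by $\hat{x}_{[-n,n]}$. For finite-to-one: a point $x$ has $q_N$-preimages $\hat{x}$ with $\min\hat{x}_n=x_n$. These are determined by $N$-subsets of $\{b\sim x_n\}$ chosen compatibly; at time $0$ there are finitely many choices, and Lemma \ref{unique path} propagates the choice. So the preimage count is bounded by $\binom{\sharp\{b\sim x_0\}}{N-1}$.

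Local compactness (1) follows because a vertex $A$ of $\mathcal{G}_N$ has finite out-degree: every $B$ with $A\to B$ is contained in the finite union of the successors of the elements of $A$.

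For properness (4): if $K\subset\Xi^\#$ is compact, then the coordinates of points of $K$ at each time range over finite sets. The preimage coordinates are $N$-subsets of finite $\sim$-neighbourhoods of these symbols, hence also range over finite sets. So $q_N^{-1}(K)$ lies in a product of finite sets and is closed, hence compact; intersecting with $\Xi_N^\#$ uses that $q_N^{-1}(K)\cap\Xi_N^\#$ is closed there.
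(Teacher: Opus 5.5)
The paper gives no proof of its own here; it quotes Buzzi's Theorem 3.3, so I judge your argument directly. The formal parts are fine: independence of the lift, the Bowen property and local finiteness of $\overset{N}{\sim}$, the inequality in (3), and the Lipschitz bound. Two substantive claims, however, are not established.

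\textbf{Equality in (3) off a null set.} You defer this to ``the local product structure of the coding''. But the statement concerns an abstract excellent semiconjugacy $\pi:\Xi^\#\to X$, and there is no product structure on $X$ to invoke. What is missing is a splicing step followed by an invariance argument.

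Suppose distinct preimages $x^1,\dots,x^N$ of $y$ fail to define a path of $\mathcal{G}_N$. Then either two of them share a symbol at some time, or there is a cross edge $x^i_n\to x^j_{n+1}$ with $i\neq j$. In the latter case $x^i_{(-\infty,n]}x^j_{[n+1,\infty)}$ is admissible, regular and coordinatewise $\sim$-related to $x^i$, hence again a preimage of $y$. In all cases $y$ has two distinct preimages whose pasts agree up to some time and then split, or whose futures agree from some time on after differing earlier.

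Let $p_n(y)$ be the number of distinct words $x_{(-\infty,n]}$ with $x\in\pi^{-1}(y)$. It is bounded by $\sharp\pi^{-1}(y)$ and nondecreasing in $n$. The Bowen property gives $\pi^{-1}(Ty)=\sigma(\pi^{-1}(y))$, hence $p_0(Ty)=p_1(y)\ge p_0(y)$. For any $T$-invariant probability measure this forces $p_0\circ T=p_0$ almost everywhere. Pulling back by $T^{-k}$ and using invariance, splitting of pasts at any time, and symmetrically merging of futures, occurs only on a set that is null for every invariant measure.

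Off that set every $N$-subset of $\pi^{-1}(y)$ yields a path of $\mathcal{G}_N$, which is regular by pigeonhole and local finiteness of $\sim$. This is the actual content of ``equality except on a null set''. The exceptional set cannot be removed, since equality genuinely fails at points where two codings split.

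\textbf{Finiteness of $q_N$.} Here the argument is wrong, not just incomplete. The identity $\min\hat x_n=x_n$ holds only for $n=0$. Lemma \ref{unique path} is also used backwards: it gives a unique lift once $\hat x$ is fixed, but it does not reconstruct $\hat x$ from one lift $x$ and the set $\hat x_0$. The other members of $\hat x_n$ may have several admissible successors.

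Local finiteness alone cannot give finiteness. Take symbols $a_n,b_n,c_n$ ($n\in\mathbb{Z}$), edges $a_n\to a_{n+1}$ and $b_n,c_n\to b_{n+1},c_{n+1}$, and $a_n\sim b_n\sim c_n$. Every sequence $(\{a_n,y_n\})_{n\in\mathbb{Z}}$ with $y_n\in\{b_n,c_n\}$ is then a path of $\mathcal{G}_2$, and each is sent by $q_2$ to $(a_n)_{n\in\mathbb{Z}}$ once the $a$'s are ordered first. These points are not regular, so excellence of $\pi$ does not exclude them, and your bound $\binom{\sharp\{b\sim x_0\}}{N-1}$ fails.

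Finiteness has to be proved on regular points and must use the finite fibres of $\pi$. If $q_N(\hat x)=x\in\Xi^\#$, then every lift of $\hat x$ is regular and Bowen equivalent to $x$, hence lies in $\pi^{-1}(\pi(x))$. Since $\hat x$ is determined by its set of lifts, there are at most $\binom{\sharp\pi^{-1}(\pi(x))-1}{N-1}$ such preimages.

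The same pigeonhole fact, $q_N^{-1}(\Xi^\#)\subseteq\Xi_N^\#$, is what (4) needs, because it gives $q_N^{-1}(K)\cap\Xi_N^\#=q_N^{-1}(K)$. Being relatively closed in $\Xi_N^\#$ would not give compactness.

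Two smaller points. Local compactness also needs finite in-degree, which follows by the same argument. You should also record $\pi_N\circ\sigma_N=T\circ\pi_N$, which holds because $\sigma x$ is a lift of $\sigma_N\hat x$.
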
 
By Theorem \ref{well-d}, the $N$-th order Bowen quotient $(\pi_N \colon \Xi_N^\# \to X,\stackrel{N}{\sim})$ is well-defined  and  excellent for every $N\gs 1$. {Notice that $q_N$ need not commute with the shifts; the semiconjugacy is $\pi_N$, not $q_N$.}

\subsubsection{SPR component}

We now regard $\Xi$ as the countable Markov system $(\Sigma, \sigma)$ from Theorem \ref{the 5} and set $N = \delta_\mathrm{rec}(\sim)$. The shift space $\Sigma$ is generated by the graph $\mathcal{G}$ with vertex set $\mathcal{V}$.

\begin{proposition}
There exists a mixing SPR irreducible component $Z$ of $\Sigma_N$ containing every $\hat{x} \in \Sigma_N^\#$ related to some magic word for $\sim$ over $\mathcal{L}(\Sigma^\mathrm{rec})$.
\end{proposition}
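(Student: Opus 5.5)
The plan follows Buzzi's construction in \cite{buzzi2020} and treats the three claims (existence of a single irreducible component, mixing, SPR) in turn.

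\textbf{Step 1: a single irreducible component.} Fix a magic couple $(w,i)$ with $N=\delta_\mathrm{rec}(\sim)$. Let $\hat w$ be the $\mathcal{G}_N$-word whose entries are
\[
\hat w_j=\{v_j: v\in\mathcal{L}(\Sigma^\#),\ v\sim w\},
\]
suitably trimmed so that each entry has exactly $N$ elements. First I will show that any $\hat x\in\Sigma_N^\#$ related to $w$ must contain $\hat w$ at every position where $w$ is related. Minimality of $N$ forces each entry to contain exactly the $N$ symbols realizing the degree; by the Bowen property, Lemma \ref{unique path} then pins down the neighbouring entries.

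Next, irreducibility of $\Sigma$ lets me connect one occurrence of $w$ to another through recurrent words $w u w$. The degree at the magic position is still $N$, so these lift to $\mathcal{G}_N$-paths $\hat w\hat u\hat w$. Hence all vertices that appear on bi-infinite paths through $\hat w$ communicate. I take $Z$ to be this irreducible component; it contains every $\hat x\in\Sigma_N^\#$ related to a magic word. Here magic words for different magic couples are also joined by irreducibility of $\Sigma$.

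\textbf{Step 2: mixing.} I will compute the period of $Z$ via loops at $\hat w$. Each loop $w u w$ in $\Sigma^\mathrm{rec}$ yields a loop in $Z$ of the same length. Since $\Sigma$ is mixing (after passing to the mixing case $l(\H)=1$, and using that the SPR coding of a mixing homoclinic class has period $1$), the gcd of these lengths is $1$, so $Z$ is mixing.

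\textbf{Step 3: SPR.} This is the main obstacle. I plan to use the entropy characterization in Proposition \ref{spr e}(2). The map $q_N$ is finite-to-one and satisfies $\pi_N=\pi\circ q_N$. Moreover, the lifts $Q(\hat x,a)$, $a\in\hat x_0$, give $N$ sequences in $\Sigma$, so an ergodic measure $\hat\nu$ on $Z$ induces an invariant measure on $\Sigma$, namely the average of the pushforwards of these $N$ lifts, with the same entropy. Conversely, $h_{\top}(Z)\gs h_{\top}(\Sigma)$, because the MME of $\Sigma$ sees $w$ with positive frequency and its lifts through the magic positions lie in $Z$. Hence $h_{\top}(Z)=h_{\top}(\Sigma)$.

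Now take the finite union $U\subset\Sigma$ of cylinders given by Proposition \ref{spr e} for $\Sigma$. Its preimage under $q_N$ (together with the other lift coordinates) is a finite union of cylinders of $\Sigma_N$, by local finiteness of $\sim$ and properness of $q_N$ (Theorem \ref{well-d}(4)). An ergodic $\hat\nu$ on $Z$ with entropy above $h_0$ projects to a measure of entropy above $h_0$, which therefore gives mass at least $\tau$ to $U$. Pulling back then gives $\hat\nu(\hat U)\gs\tau/N$. The delicate points are the ergodic-decomposition bookkeeping (the projection may fail to be ergodic; I would apply the criterion to its components, with $\tau$ replaced by $\tau/N$) and checking that $\hat U\cap Z$ is a finite union of cylinders. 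If this runs into difficulty, the fallback is the alternative route used by Buzzi: show that $Z$ is SPR via Proposition \ref{spr e}(3), using uniqueness of the MME on $\H$ and the finite-to-one entropy-preserving factor map.
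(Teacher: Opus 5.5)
\textbf{Gap in Step 2 (mixing).} Your gcd argument needs $\Sigma$ itself to be topologically mixing, and you only assert that ``the SPR coding of a mixing homoclinic class has period $1$''. Theorem~\ref{the 5} gives irreducibility only, and mixing of $(\mathcal{H},f)$ does not lift through a finite-to-one Bowen coding.

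For example, take a mixing coding $\Sigma$ as in Theorem~\ref{the 5}. Form the Markov shift on $\mathcal{V}\times\{0,1\}$ with edges $(a,j)\to(b,1-j)$ whenever $a\to b$, coded by $((x_n,j_n))_n\mapsto\pi(x)$, with the locally finite Bowen relation $(a,j)\sim'(b,k)\iff a\sim b$. This shift still satisfies every conclusion of Theorem~\ref{the 5}, but it has period $2$. Every loop $wuw$ then has even length, so your computation gives nothing. Yet the Bowen quotient of this doubled coding is the same as that of $\Sigma$.

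So the period of $Z$ has to be obtained without using the period of $\Sigma$. The paper does this through measures:
\begin{itemize}
\item by Buzzi's Theorems 4.6 and 5.2, $\pi_N\colon(Z,\mu_Z,\sigma_Z)\to(\mathcal{H},\mu,f)$ is a measure-theoretic isomorphism;
\item $\mu$ is Bernoulli, hence mixing, by \cite[Theorem 2]{BCS} when $l(\mathcal{H})=1$;
\item the fully supported MME of an irreducible Markov shift can be mixing only if the shift has period $1$.
\end{itemize}
A periodic-orbit version would also work: each $n$-periodic $f$-orbit with a lift seeing $w$ has a unique quotient lift, which is $n$-periodic in $Z$. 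But you would then have to prove separately that the periods of such orbits have gcd $l(\mathcal{H})=1$, and that is not automatic.

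\textbf{Step 3 is a valid alternative route.} The paper pulls back the geometric SPR property of $\mathcal{H}$ (Theorem~\ref{spr h x}). The Pesin block $B$ is covered by finitely many cylinders of $\Sigma$ (Lemma~\ref{belong}), hence by the finite union $D$ of cylinders $[A]_Z$. The bound $\rho(D)\geqslant(\pi_N)_*\rho(B)>\tau$ then uses that $(\pi_N)_*\rho$ is $\chi'$-hyperbolic and carried by $\mathrm{NUH}^\#_{\chi'}$. You instead pull back the symbolic SPR property of $\Sigma$ (Theorem~\ref{the 5}(1) with Proposition~\ref{spr e}(2)) through the $N$ lifts $Q(\hat x,a)$. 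This is purely combinatorial and avoids Pesin theory.

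The ergodicity issue you flag is harmless. Almost every ergodic component of the lifted measure on the finite extension projects to $\rho$, so it has entropy $h_\rho(\sigma_Z)>h_0$, and so does its finite-to-one image in $\Sigma$. If you define $\hat U$ through all lifts rather than through $q_N$ alone, you get $\rho(\hat U)>\tau$ directly, with no loss to $\tau/N$. That $\hat U$ is a finite union of cylinders follows from local finiteness of $\sim$ and of $\mathcal{G}_N$, not from properness of $q_N$.

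Both routes need $h_{\mathrm{top}}(\sigma_Z,Z)=h_{\mathrm{top}}(\sigma,\Sigma)$. In your sketch, as in the paper, the lower bound rests on Buzzi's result that $\mu$-a.e.\ fibre has exactly $N$ regular preimages.

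\textbf{Step 1.} You only obtain, and only need, $\hat x_{p+i}=A_w$ at the magic position, not a full block $\hat w$. Membership in $Z$ then follows because $\hat x$ visits $A_w$ infinitely often in both directions, not from lifting loops $wuw$.
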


\begin{proof}
By Claim 5.5 of \cite{buzzi2020}, it suffices to verify the SPR property. More precisely, fix a magic couple $(w,i)$ for $\sim$ over $\mathcal{L}(\Sigma^{\mathrm{rec}})$ and define
\[
A_w
\coloneqq
\bigl\{
v_i : v\in\mathcal{L}_{|w|}(\Sigma^\#),\ v\sim w
\bigr\}
\in\mathcal{V}_N.
\]
The proof of Claim 5.5 of \cite{buzzi2020} shows that every $\hat{x}\in\Sigma_N^\#$ related to $w$ visits the symbol $A_w$ infinitely often in both forward and backward time, and hence belongs to the irreducible component $Z_w$ of $\Sigma_N$ containing $A_w$. If $v$ is another magic word, the irreducibility of $\Sigma$ yields a periodic point whose orbit sees both $w$ and $v$ infinitely often. Its lift to $\Sigma_N^\#$ is related to both words, and therefore $Z_w=Z_v$. The resulting irreducible component, independent of the chosen magic word, is denoted by $Z$. {We write $\sigma_Z:=\sigma_N|_Z$ for the restricted shift on $Z$.} Thus $Z$ contains every $\hat{x}\in\Sigma_N^\#$ related to some magic word for $\sim$ over $\mathcal{L}(\Sigma^{\mathrm{rec}})$. Moreover, Claim 5.5 of \cite{buzzi2020} gives
\[
\delta_{\mathrm{rec}}\bigl(\pi_N|_{Z^\#}\bigr)=1.
\]
It remains to prove that $Z$ is SPR.

{
Let $w$ be a magic word of $\Sigma$. Denote by $\mu$ the unique MME on $\mathcal{H}$. By Theorem \ref{the 5}, $\mu$ admits an entropy-preserving invariant lift $\mu_1$ to $\Sigma$. This lift is the unique MME on $\Sigma$ by \cite{Buzzi-Sarig}, and it has full support since $\Sigma$ is irreducible. Thus
\[
\pi_*\mu_1=\mu.
\]
Since $\mu_1([w])>0$, almost every orbit with respect to $\mu_1$ visits $[w]$ infinitely often. By Theorem 4.6 of \cite{buzzi2020} and the Bowen property,
\[
\sharp\bigl(\pi^{-1}(y)\cap\Sigma^\#\bigr)=N
\]
for $\mu$-almost every $y$. Applying Theorem 5.2 of \cite{buzzi2020} to the $N$-th order Bowen quotient gives an ergodic invariant measure $\mu_2$ on $\Sigma_N$ such that
\[
(\pi_N)_*\mu_2=\mu
\]
and $\pi_N:(\Sigma_N,\mu_2,\sigma_N)\to(\mathcal H,\mu,f)$ is a measure-theoretic isomorphism. For $\mu$-almost every $y$, one can choose $x\in\pi^{-1}(y)\cap\Sigma^\#$ whose orbit sees $w$ infinitely often. Since this fiber has cardinality $N$, the unique point $\hat{x}\in\pi_N^{-1}(y)$ contains $x$ coordinatewise and is therefore related to $w$. Claim 5.5 of \cite{buzzi2020} then implies that $\mu_2$ is carried by $Z$. {In what follows, we write $\mu_Z:=\mu_2$, viewed as a measure on $Z$.} Consequently,
{
\[
h_{\mathrm{top}}(\sigma_Z,Z)
\gs
h_{\mu_Z}(\sigma_Z)
=
h_\mu(f)
=
h_{\mathrm{top}}(f,\mathcal{H}).
\]
}
}
{
In addition, for any ergodic invariant measure
\[
\rho\in\mathcal M_{\erg}(\sigma_Z,Z),
\]
the pushforward
\[
(\pi_N)_*\rho
\coloneqq
\rho\circ\pi_N^{-1}
\]
is an ergodic $f$-invariant measure on $\mathcal H$, as $\pi_N$ defines a semiconjugacy from $(Z,\sigma_Z)$ to $(\mathcal H,f)$. Since $\pi_N$ is finite-to-one, metric entropy is preserved:
\[
h_{(\pi_N)_*\rho}(f)
=
h_\rho(\sigma_Z).
\]
}
Since $\rho(Z^\#)=1$, consider the finite extension
\[
\widetilde Z:=\bigl\{(\hat{x},x)\in Z^\#\times\Sigma^\#:x_n\in\hat{x}_n\text{ for all }n\in\mathbb Z\bigr\},
\qquad
\widetilde\sigma(\hat{x},x):=({\sigma_Z}\hat{x},\sigma x).
\]
For each $\hat{x}$, its fiber consists of the $N$ paths $Q(\hat{x},a)$, $a\in\hat{x}_0$. The uniform measures on these fibers are equivariant and define a $\widetilde\sigma$-invariant lift $\widetilde\rho$ of $\rho$. Let $\widehat\rho$ be its projection to $\Sigma$. Then $\widehat\rho$ is $\sigma$-invariant and
\[
\pi_*\widehat\rho=(\pi_N)_*\rho.
\]
Theorem \ref{the 5} therefore implies that $(\pi_N)_*\rho$ is $\chi'$-hyperbolic and
\[
(\pi_N)_*\rho\bigl(\mathrm{NUH}_{\chi'}^\#\bigr)=1.
\]

It then follows that
\[
{
h_{\mathrm{top}}(\sigma_Z,Z)
=
\sup
\bigl\{
h_\rho(\sigma_Z):
\rho\in\mathcal M_{\erg}(\sigma_Z,Z)
\bigr\}
\ls
h_{\mathrm{top}}(f,\mathcal H).
}
\]
Combining the two entropy inequalities yields
\[
{
h_{\mathrm{top}}(\sigma_Z,Z)
=
h_{\mathrm{top}}(f,\mathcal H).
}
\]

Moreover, $Z$ is topologically mixing. Indeed, in the present reduction
$l(\mathcal H)=1$, and \cite[Theorem 2]{BCS} implies that
$(\mathcal H,\mu,f)$ is Bernoulli, hence mixing. Since
\[
\pi_N:(Z,\mu_Z,\sigma_Z)\longrightarrow(\mathcal H,\mu,f)
\]
is a measure-theoretic isomorphism, $\mu_Z$ is mixing. The measure $\mu_Z$
is the MME of the irreducible Markov shift $Z$ and has full support.
Therefore the period of $Z$ is equal to $1$, and hence $(Z,\sigma_Z)$ is
topologically mixing.

{By Theorem \ref{spr h x}, for the fixed rate $\chi_0>0$ and all sufficiently small $\vep>0$, one can find $\tau\in(0,1)$, a constant $h_0<h_{\mathrm{top}}(f,\mathcal H)$, and a $(\chi_0,\vep)$-Pesin block $B$ such that}
\begin{eqnarray}\label{spr h}
\forall\,\nu\in \mathcal{M}_{\erg}(f, \mathcal{H}):\quad h_\nu(f) > h_0 \Rightarrow \nu(B) > \tau,
\end{eqnarray}
as in Definition \ref{SPR}. Observe that any $(\chi_0, \vep)$-Pesin block is also a $(\chi, \vep)$-Pesin block whenever $0 < \chi < \chi_0$.

Concerning recurrence properties, for each $\chi' > 0$, \cite[Section 3.3.1]{BCS} introduces a Borel subset $\mathrm{NUH}^{\#}_{\chi'} \subseteq \mathcal{H}$ of full measure for every saddle-type $\chi'$-hyperbolic measure supported on $\mathcal{H}$. Choose any $0 < \chi' < \chi - \vep$ as in Theorem \ref{the 5}; then every $(\chi, \vep)$-Pesin block is contained in $\mathrm{NUH}_{\chi'}(f)$.

\begin{lemma}[(P3) of \cite{BCS} (Page 453), Lemma 10.3 of \cite{SPR2025}]\label{belong}
For the Pesin block $B$, there exist vertices $V_1, \dots, V_{m} \in \mathcal{V}$ such that
\[
\bigl(\pi^{-1}\bigl(B \cap \mathrm{NUH}_{\chi'}^{\#}\bigr)\bigr) \cap \Sigma^{\#} \subset \bigcup_{1 \ls j \ls m} [V_j].
\]
\end{lemma}

Recall the factorization $\pi_N = \pi \circ q_N$. Applying Lemma \ref{belong},
\[
q_N\bigl(\pi_N^{-1}\bigl(B \cap \mathrm{NUH}_{\chi'}^{\#}\bigr)\bigr) \cap \Sigma^{\#} \subseteq \pi^{-1}\bigl(B \cap \mathrm{NUH}_{\chi'}^{\#}\bigr) \cap \Sigma^{\#} \subseteq \bigcup_{1 \ls j \ls m} [V_j].
\]

For each index $1 \ls j \ls m$, define the finite collection
\[
A_j = \bigl\{A\in \mathcal{V}_N: V \sim V_j \text{ for all } V\in A\bigr\},
\]
which is finite by local finiteness of the Bowen relation $\sim$. Note that $q_N(\Sigma_N^\#) \subseteq \Sigma^\#$. We then deduce
\[
{\pi_N^{-1}\bigl(B \cap \mathrm{NUH}_{\chi'}^{\#}\bigr) \cap Z^{\#} \subseteq \bigcup_{A\in \bigcup_{1 \ls j \ls m} A_j} [A]_Z \eqqcolon D,}
\]
where $[A]_Z$ denotes the cylinder set of the symbol $A$ within the component $Z$.

{
Take an arbitrary
\[
\rho\in\mathcal M_{\erg}(\sigma_Z,Z)
\]
with
\[
h_\rho(\sigma_Z)>h_0.
\]
Its pushforward satisfies
\[
(\pi_N)_*\rho\in\mathcal M_{\erg}(f,\mathcal H)
\]
and
\[
h_{(\pi_N)_*\rho}(f)
=
h_\rho(\sigma_Z)
>
h_0.
\]
}
Using the SPR estimate \eqref{spr h}, we compute
\begin{eqnarray*}
\rho(D) &\gs& \rho\Bigl(\pi_N^{-1}\bigl(B \cap \mathrm{NUH}_{\chi'}^{\#}\bigr) \cap {
Z^{\#}}\Bigr)\\[2mm]
&=& (\pi_N)_*\rho\bigl(B \cap \mathrm{NUH}_{\chi'}^{\#}\bigr)
= (\pi_N)_*\rho(B) > \tau.
\end{eqnarray*}
By assertion (2) of Proposition \ref{spr e}, this verifies that $Z$ satisfies the SPR condition.
\end{proof}

{Applying the argument of Buzzi \cite[Theorem 1.2]{buzzi2020} to the irreducible component associated with $\mathcal H$, we obtain the following lower bound:}
\[
\liminf_{ n \to \infty} \frac{\sharp P_{\chi_1,\chi_2}(f, \mathcal{H}, n)}{e^{n h_{\mathrm{top}}(f, \mathcal{H})}} \gs 1.
\]
Combined with Proposition \ref{away}, it remains to prove the corresponding upper bound:
\[
\limsup_{n \to \infty} \frac{\sharp P^c_{\chi_1,\chi_2}(f, \mathcal{H}, \eta, n)}{e^{n h_{\mathrm{top}}(f, \mathcal{H})}} \ls 1.
\]

\subsection{Continuity of Lyapunov exponents}\label{sec 4.3}

For any Lyapunov regular point $x$, let $\lambda_1(x,f) \gs \lambda_2(x,f)$ denote its two Lyapunov exponents. For each $f$-invariant Borel probability measure $\nu$, define the integrated Lyapunov exponents by
\[
\lambda_1(\nu,f)
=
\int \lambda_1(x,f)\,\mathrm{d}\nu,
\qquad
\lambda_2(\nu,f)
=
\int \lambda_2(x,f)\,\mathrm{d}\nu.
\]
Recall the trace identity
\[
\lambda_1(\nu,f)+\lambda_2(\nu,f)
=
\int \log |\det(D_xf)|\,\mathrm{d}\nu,
\]
which implies that the sum $\lambda_1+\lambda_2$ is continuous with respect to the weak$^*$ topology on the space of invariant measures. Together with the lower semicontinuity of $\lambda_2$, this implies that, for every sufficiently small
\[
0<\delta<\frac{1}{2}\min\{t_1,-t_2\},
\]
there exists $\eta_0=\eta_0(\delta)>0$ such that every ergodic $f$-invariant measure $\nu$ satisfying $d_{\mathcal{M}}(\nu,\mu)<\eta_0$ also satisfies
\[
\bigl|\lambda_1(\nu,f)+\lambda_2(\nu,f)-t_1-t_2\bigr|<\delta,
\qquad
\lambda_2(\nu,f)>t_2-\delta.
\]
If, in addition, $\lambda_2(\nu,f)<t_2+\delta$, then
\[
|\lambda_2(\nu,f)-t_2|<\delta
\qquad\text{and}\qquad
|\lambda_1(\nu,f)-t_1|<2\delta.
\]
Consequently, under these assumptions, $\lambda_1(\nu,f)$ and $\lambda_2(\nu,f)$ approximate $t_1$ and $t_2$, respectively.

\begin{theorem}[\cite{SPR2025}, Proposition 2.11 of \cite{LTTY2026}]
\label{large measure}
There exist $\chi>0$ such that for any $b\in(0,1)$, there exists $\delta>0$ such that, for all sufficiently small $\varepsilon>0$, one can construct a $(\chi,\varepsilon)$-Pesin block $\Lambda$ and a constant $\eta_1>0$ with the following property: every ergodic $f$-invariant measure $\nu$ such that
\[
d_{\mathcal{M}}(\nu,\mu)<\eta_1
\qquad\text{and}\qquad
\lambda_2(\nu,f)<t_2+\delta
\]
satisfies
\[
\nu(\Lambda)>1-b.
\]
\end{theorem}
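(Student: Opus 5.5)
The plan is to reduce the uniform mass estimate to a maximal-inequality bound on finite-time expansion rates. The key tools are the weak$^*$ continuity of the quantities $\int \tfrac1N\log\|D_xf^{\pm N}\|\,\td\nu$ for fixed $N$, and the exponent control of Section~\ref{sec 4.3}.

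\textbf{Choice of constants.} Fix $\chi:=\tfrac12\min\{t_1,-t_2\}$. Given $b\in(0,1)$, take $\delta<\tfrac14\min\{t_1,-t_2\}$, so that $t_1-2\delta$ and $|t_2|-\delta$ exceed $\chi$ by a definite margin $\kappa>0$. For $\varepsilon$ small we will use the maximal blocks $\Lambda_k(\chi,\varepsilon)$. These are compact, so it suffices to find $k$ and $\eta_1$ with $\nu(\Lambda_k(\chi,\varepsilon))>1-b$ for every admissible $\nu$.

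\textbf{Step 1: choose the scale $N$ and the radius $\eta_1$.} Choose $N$ so that
\[
\int \tfrac1N\log\|D_xf^N\|\,\td\mu<t_1+\theta,
\qquad
\int \tfrac1N\log\|D_xf^{-N}\|\,\td\mu<-t_2+\theta,
\]
where $\theta\ll \kappa b$; this is possible by the subadditive ergodic theorem. Then take $\eta_1\le\eta_0(\delta)$ small enough that both integrals stay within $\theta$ when $\mu$ is replaced by any $\nu$ with $d_{\M}(\nu,\mu)<\eta_1$. Section~\ref{sec 4.3} gives $\lambda_1(\nu)>t_1-2\delta$ and $\lambda_2(\nu)<t_2+\delta$.

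\textbf{Step 2: small defects.} In dimension two, $\|D_xf^N|_{E^u(x)}\|$ and $\|D_xf^{-N}|_{E^s(x)}\|$ have Birkhoff integrals $N\lambda_1(\nu)$ and $-N\lambda_2(\nu)$. Define the defect functions
\[
g^u_N:=\tfrac1N\log\|D f^N\|-\tfrac1N\log\|Df^N|_{E^u}\|\gs0,
\qquad
g^s_N:=\tfrac1N\log\|D f^{-N}\|-\tfrac1N\log\|Df^{-N}|_{E^s}\|\gs0 .
\]
By Step 1 and the exponent bounds, these satisfy $\int g^{u}_N\,\td\nu,\ \int g^s_N\,\td\nu\ls 2\delta+2\theta$. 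This is not yet small compared with $\kappa$. I therefore plan to shrink $\delta$ as well, so that it is at most a small multiple of $\kappa b$; this is allowed, since $\delta$ may depend on $b$.

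\textbf{Step 3: pass from $N$-step contraction to the Pesin estimate.} Along $\nu$-typical orbits,
\[
\log\|Df^{n}|_{E^s(y)}\|
\]
is, up to a bounded error $C_N$, the Birkhoff sum over $N$-blocks of $\tfrac1N\log\|Df^{N}|_{E^s}\|$. The latter has $\nu$-average $\ls t_2+\delta<-\chi-\kappa$. Apply the maximal ergodic theorem (for $f^N$, averaged over the $N$ phases) to the function
\[
\psi:=\bigl(\tfrac1N\log\|Df^{N}|_{E^s}\|+\chi+\kappa/2\bigr)^+ .
\]
This controls the $\nu$-measure of the set of $y$ for which some forward Birkhoff sum of $\tfrac1N\log\|Df^{N}|_{E^s}\|+\chi$ exceeds a constant $K$. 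That measure is bounded by $\int\psi\,\td\nu/K$, which is $\ls b/8$ once $K$ is large. The same argument applies to backward times and to $E^u$. The tempering term $\varepsilon|m|$ in the Pesin block definition is handled similarly: apply the maximal inequality to the bounded functions $\log\|Df^{\pm1}\|$ and to the angle function $\log|\sin\angle(E^s,E^u)|$. For the angle, $\int g^u_N,\int g^s_N$ small forces $E^u$ (resp.\ $E^s$) to be close to the most expanded direction of $Df^N$ (resp.\ $Df^{-N}$) on most of the mass, so the angle is bounded below except on a set of small measure. Collecting the exceptional sets, all points outside them lie in $\Lambda_k(\chi,\varepsilon)$ for some $k=k(K,N,\varepsilon)$, independent of $\nu$. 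This gives $\nu(\Lambda)>1-b$ with $\Lambda=\Lambda_k(\chi,\varepsilon)$.

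\textbf{Main obstacle.} The hard step is making the estimates uniform over all $\nu$ near $\mu$, because $E^s$ and $E^u$ are only measurable and do not vary continuously with $\nu$. The route planned here is to express everything through the continuous quantities $\log\|Df^{\pm N}\|$, with the defect bounds of Step 2 as the sole link to the Oseledets splitting. I would make the angle control precise first, via a cone argument for $Df^N$ with a dominated gap of size about $N(t_1-t_2)$, since the rest is routine bookkeeping with the maximal inequality.
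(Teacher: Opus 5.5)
\paragraph{Verdict.} The paper gives no proof of this statement; it is imported from \cite{SPR2025} and \cite{LTTY2026}, so your proposal has to stand on its own. Its core idea is sound. Weak$^*$ continuity of $\int\frac1N\log\|Df^{\pm N}\|\,\td\nu$, together with $\lambda_2(\nu)<t_2+\delta$, gives $\int g^u_N\,\td\nu,\ \int g^s_N\,\td\nu=O(\delta+\theta)$. However, the tempering step has a genuine gap, and Step 3 also needs two repairs before it yields even the untempered estimate.

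\paragraph{Repairs needed for the untempered estimate.} First, the bound $\nu\{\sup_nS_n\psi>K\}\ls\int\psi\,\td\nu/K$ is not a maximal inequality. Since $\psi\gs0$, $S_n\psi\to\infty$ a.s.\ whenever $\int\psi>0$. The correct tool is $\nu\{\sup_n\frac1nS_n\psi>\frac{\kappa}{2}\}\ls\frac{2}{\kappa}\int\psi\,\td\nu$. This bounds $\nu\{G>C_N\}$, where $G(x):=\sup_{n\gs0}\sum_{j<n}\bigl(\log\|Df|_{E^s(f^jx)}\|+\chi\bigr)$. Second, a negative mean does not make $\int\psi\,\td\nu$ small. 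You need
\[
\psi(x)\ls\Bigl(\chi+\tfrac{\kappa}{2}-\tfrac1N\log\|D_{f^Nx}f^{-N}\|\Bigr)^++g^s_N(f^Nx),
\]
whose first term is continuous and has small $\mu$-integral for large $N$. With these repairs, $G\ls C_N$ holds off a set of $\nu$-measure $O((\delta+\theta)/\kappa)$, uniformly in $\nu$.

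\paragraph{The gap: tempering.} Membership in $\Lambda_k(\chi,\varepsilon)$ requires $G(f^my)\ls k\varepsilon+\varepsilon|m|$ for \emph{every} $m\in\mathbb Z$. But $|G\circ f-G|$ is only bounded by a constant $C$ depending on $\|Df^{\pm1}\|$ and $\chi$. So a run of $L$ consecutive orbit points with large $G$ disqualifies about $L/\varepsilon$ neighbouring points. The standard cure is the maximal inequality for $\mathbf 1_{\{G>K_0\}}$ at level $\varepsilon/2C$, applied forward and backward. It needs $\nu\{G>K_0\}\ll\varepsilon b/C$.

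Your control is $O((\delta+\theta)/\kappa)$. You may shrink $\theta$ with $\varepsilon$, since $N$ and $\eta_1$ are chosen after $\varepsilon$. But $\delta=\delta(b)$ is fixed \emph{before} $\varepsilon$, and the bound does not improve as $K_0\to\infty$. An $L^1$ bound of size $\delta$ on $g^s_N$ is compatible with a periodic $\nu$ that is weak$^*$ close to $\mu$, satisfies $\lambda_2(\nu)<t_2+\delta$, and spends a fraction $\beta$ of order $\delta$ of its period in one stretch on which $E^s$ is not contracted. An example would be orbits that shadow $\mu$-typical pieces and pass very close to a homoclinic tangency, where $E^s$ is nearly aligned with the preceding unstable direction. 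If $\varepsilon\ll\beta$ and the period is large, no point of such an orbit lies in $\Lambda_k(\chi,\varepsilon)$.

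Maximal inequalities for the bounded functions $\log\|Df^{\pm1}\|$, or for the angle, cannot repair this: their Birkhoff sums move at slope $O(1)$, not $\varepsilon$. Your angle control has the same defect. Moreover, small $g^u_N$ does not place $E^u$ near the most expanded direction of $Df^N$. It only keeps $E^u$ at angle $\gtrsim e^{-Ng^u_N}$ from the most contracted one.

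\paragraph{What your argument does prove.} It proves only the weaker statement in which $\delta$ may depend on $\varepsilon$. Taking $\delta,\theta\ll\varepsilon\kappa b/C$ makes the bad set $\ll\varepsilon b$, and the two-sided maximal inequality then yields the $\varepsilon|m|$ tempering, for both the rates and the angle. The quantifier order as stated, with $\delta$ depending only on $b$, is not reached.
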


We now fix the parameters $\chi_1$ and $\chi_2$ appearing in our main theorems. Let
\[
\chi_1\in\bigl(0,h_{\top}(f,\H)\bigr)
\]
be arbitrary and fixed in the rest of the paper.
{Let $\chi_{\mathrm L}>0$ be the constant in Theorem \ref{large measure}. Replacing the hyperbolicity rate by a smaller one if necessary, fix
\[
0<\chi<\frac{\min\{\chi_{\mathrm L},\chi_0,\chi_1,t_1,|t_2|\}}{100}.
\]
The conclusion of Theorem \ref{large measure} is unchanged, since every $(\chi_{\mathrm L},\varepsilon)$-Pesin block is also a $(\chi,\varepsilon)$-Pesin block. This $\chi$ is also used in the coding Theorem \ref{the 5}.}

Then, we define $\chi_2$. {Set $b:=\frac{1}{4}$ and apply Theorem \ref{large measure} with this fixed value of $b$.}
This gives the constant $\delta>0$.
By decreasing $\delta$ if necessary, we may also assume that
\[
0<\delta<\frac{1}{2}\min\{t_1,-t_2\}.
\]
Fix $\varepsilon>0$ sufficiently small so that Theorem \ref{large measure} applies, and $\varepsilon<\chi$, and let $\Lambda$ and $\eta_1$ be the corresponding $(\chi,\varepsilon)$-Pesin block and constant. For this choice of $\delta$, choose $\eta_0>0$ as in the continuity argument above, and decrease $\eta_0$ if necessary so that
\(
\eta_0\leqslant \eta_1.
\)
We then define
\begin{equation}\label{chi1}
\chi_2=t_2+\delta.
\end{equation}
{It follows from Theorem \ref{large measure} that, for this fixed constant $b=\frac{1}{4}$ and every}
\[
x\in\bigcup_n P^c_{\chi_1,\chi_2}
(f,\mathcal{H},\eta_0,n),
\]
the periodic measure $\mu_{\mathcal{O}(x)}$ satisfies
\begin{equation}\label{b-large}
\mu_{\mathcal{O}(x)}(\Lambda)>1-b.
\end{equation}

\begin{proposition}\label{homoclinic}
{For the fixed constant $b=\frac{1}{4}$, there exists
$\eta\in(0,\eta_0)$ such that}
\[
\bigcup_n
P^c_{\chi_1,\chi_2}
(f,\mathcal{H},\eta,n)
\subseteq \mathcal{X}.
\]
\end{proposition}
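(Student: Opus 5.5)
The plan is to show that every periodic point $x\in P^c_{\chi_1,\chi_2}(f,\H,\eta,n)$ is homoclinically related to $\mu$, which places $x$ in $\mathcal{X}$. The strategy uses three ingredients: the large Pesin block $\Lambda$ from \eqref{b-large}, the uniform size of local stable and unstable manifolds on $\Lambda$, and a $\mu$-typical recurrent configuration inside $\Lambda$.

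\emph{Step 1: a good reference point set for $\mu$.} Since $\mu$ is $\chi$-hyperbolic and carried by $\mathcal{X}$, Theorem \ref{large measure} applied to $\mu$ itself (taking $\nu=\mu$, which satisfies both hypotheses) gives $\mu(\Lambda)>1-b=3/4$. By continuity of $W^s_{\mathrm{loc}}$ and $W^u_{\mathrm{loc}}$ on $\Lambda$ (Pesin's theorem), there is $r>0$ such that for all $y,z\in\Lambda$ with $d(y,z)<r$, the manifolds $W^u_{\mathrm{loc}}(y)$ and $W^s_{\mathrm{loc}}(z)$ intersect transversally; in dimension two this follows from the uniform angle between $E^s$ and $E^u$ on $\Lambda$. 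Cover $\Lambda$ by finitely many open sets $U_1,\dots,U_k$ of diameter less than $r/2$. Choose a point $y_j\in U_j\cap\Lambda\cap\mathcal{X}$ for each $j$ such that $\mu(U_j\cap\Lambda)>0$; such points exist because $\mu(\mathcal{X})=1$. We may also take each $y_j$ in $\mathrm{NUH}'$. Let $G=\bigcup_{\mu(U_j\cap\Lambda)>0}U_j$. Then the part of $\Lambda$ lying outside $G$ has $\mu$-measure zero, so the part of $\Lambda$ covered by the sets $U_j$ with $\mu(U_j\cap\Lambda)=0$ is $\mu$-null.

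\emph{Step 2: choosing $\eta$.} The aim is for the orbit of $x$ to meet some $U_j\cap\Lambda$ with $\mu(U_j\cap\Lambda)>0$. Let $F:=\Lambda\setminus G$, a compact $\mu$-null set. Choose an open set $W\supset F$ with $\mu(\overline W)<1/4$. I expect this to be the main obstacle: weak$^*$ closeness alone does not control $\mu_{\mathcal O(x)}(W)$ from above, because $W$ is open. The remedy is to use the closed set $\overline W$ and upper semicontinuity, $\limsup_{\nu\to\mu}\nu(\overline W)\le\mu(\overline W)<1/4$. This gives $\eta<\eta_0$ such that $\mu_{\mathcal{O}(x)}(\overline W)<1/4$ whenever $d_\M(\mu_{\mathcal O(x)},\mu)\le\eta$. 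Combining this with \eqref{b-large} gives $\mu_{\mathcal O(x)}(\Lambda\setminus W)>1/2>0$. Hence some iterate $f^i(x)$ lies in $\Lambda\cap G$, that is, within distance $r/2$ of some $y_j$ with both points in $\Lambda$.

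\emph{Step 3: homoclinic relation.} By Step 1, $W^u_{\mathrm{loc}}(f^ix)$ meets $W^s_{\mathrm{loc}}(y_j)$ transversally, and $W^s_{\mathrm{loc}}(f^ix)$ meets $W^u_{\mathrm{loc}}(y_j)$ transversally. Since $x$ is a hyperbolic periodic point, its Pesin manifolds coincide with its true stable and unstable manifolds, and $f^ix\in\mathrm{NUH}'$ trivially as a periodic point of $\Lambda$. Therefore $f^ix\sim y_j$ in the sense of the pointwise homoclinic relation. Because $\sim$ is an equivalence relation on $\mathrm{NUH}'(f)$ (Proposition 2.6 of \cite{SPR2025}), invariant under $f$, and $y_j\in\mathcal{X}$, we get $x\in\mathcal{X}$. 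One point needs checking: $y_j$ must actually lie in $\mathrm{NUH}'$ with its local manifolds given by the block. This holds by choosing $y_j$ $\mu$-typical, since Katok's theorem gives $\mu(\mathrm{NUH}')=1$.
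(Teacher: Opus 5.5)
Your proposal is correct and follows essentially the paper's argument: weak$^*$ closeness combined with \eqref{b-large} places an orbit point of $x$ in $\Lambda$ within the uniform transversality scale of a $\mu$-charged part of $\Lambda$, and transverse intersections of the local Pesin manifolds then give the homoclinic relation. The differences are only technical. You discard the $\mu$-null pieces of a finite cover and use upper semicontinuity on the closed set $\overline W$, whereas the paper takes an open $\gamma$-neighborhood of $\supp(\mu|_{\Lambda})$ and uses lower semicontinuity. You also conclude via the pointwise relation $f^i x\sim y_j$ with a $\mu$-typical $y_j\in\mathcal X$, whereas the paper shows that $\mu_{\mathcal O(x)}$ and $\mu$ are homoclinically related as measures.
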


\begin{proof}
On the Pesin block $\Lambda$, the local stable and unstable manifolds have uniformly positive sizes and uniform transverse-intersection properties. Accordingly, there exists $\gamma_0>0$ such that, for any pair $y,z\in\Lambda$ with $d(y,z)<\gamma_0$,
\[
W^s_{\mathrm{loc}}(z)
\text{ intersects }
W^u_{\mathrm{loc}}(y)
\text{ transversally},
\qquad
W^u_{\mathrm{loc}}(z)
\text{ intersects }
W^s_{\mathrm{loc}}(y)
\text{ transversally}.
\]
In particular, any such points $y$ and $z$ are homoclinically related.

Choose $0<\gamma<\gamma_0/2$, and let $U$ be the open $\gamma$-neighborhood of
\[
\supp(\mu\mid_{\Lambda}).
\]
{Since $\mu$ itself satisfies the hypotheses of Theorem \ref{large measure} with the fixed value $b=\frac{1}{4}$, we have}
\[
\mu(\Lambda)>1-b.
\]
Moreover,
\[
\mu(U)\geqslant\mu(\Lambda)>1-b.
\]
Because $U$ is open and $d_{\mathcal{M}}$ is compatible with the weak$^*$ topology, after choosing $\eta\in(0,\eta_0)$ sufficiently small, every invariant probability measure $\nu$ satisfying
\[
d_{\mathcal{M}}(\nu,\mu)<\eta
\]
also satisfies
\[
\nu(U)>1-2b.
\]

Take an arbitrary point
\[
x\in
\bigcup_n
P^c_{\chi_1,\chi_2}
(f,\mathcal{H},\eta,n).
\]
Then
\[
d_{\mathcal{M}}
\bigl(\mu_{\mathcal{O}(x)},\mu\bigr)<\eta,
\]
and hence
\[
\mu_{\mathcal{O}(x)}(U)>1-2b.
\]
Since $\eta<\eta_0$, estimate \eqref{b-large} also gives
\[
\mu_{\mathcal{O}(x)}(\Lambda)>1-b.
\]
Combining these two estimates, we obtain
\[
\mu_{\mathcal{O}(x)}(U\cap\Lambda)
\geqslant
\mu_{\mathcal{O}(x)}(U)
+
\mu_{\mathcal{O}(x)}(\Lambda)-1
>
1-3b>0.
\]

Choose a point
\[
z\in\mathcal{O}(x)\cap U\cap\Lambda.
\]
By the definition of $U$, there exists
\(
y\in\supp(\mu\mid_{\Lambda})
\)
such that
\[
d(y,z)<\gamma.
\]
For every
\[
\xi\in B(y,\gamma)\cap\Lambda,
\]
we have
\[
d(z,\xi)
\leqslant d(z,y)+d(y,\xi)
<2\gamma<\gamma_0.
\]
It follows from the uniform transverse-intersection property of $\Lambda$ that $z$ is homoclinically related to every
\[
\xi\in B(y,\gamma)\cap\Lambda.
\]
Since
\(
y\in\supp(\mu\mid_{\Lambda}),
\)
we have
\[
\mu\bigl(B(y,\gamma)\cap\Lambda\bigr)>0.
\]
Thus the periodic measure $\mu_{\mathcal{O}(x)}$ is homoclinically related to $\mu$. Since $\mathcal{X}$ is the Borel homoclinic class carrying $\mu$, this implies
\[
\mathcal{O}(x)\subseteq\mathcal{X},
\]
and, in particular, $x\in\mathcal{X}$. This proves Proposition \ref{homoclinic}.
\end{proof}

Proposition \ref{homoclinic} guarantees that all points in
\[
\bigcup_n
P^c_{\chi_1,\chi_2}
(f,\mathcal{H},\eta,n)
\]
admit symbolic codings in the shift space $\Sigma$.

Parallel to Lemma \ref{belong}, for the Pesin block $\Lambda$, there exist vertices
\[
R_1,\dots,R_{\kappa}\in\mathcal{V}
\]
such that
\begin{align}\label{finite cy}
\pi^{-1}
\bigl(\Lambda\cap\mathrm{NUH}_{\chi'}^{\#}\bigr)
\cap\Sigma^{\#}
\subset
\bigcup_{1\ls j\ls \kappa}[R_j].
\end{align}

For any
\[
x\in
\left(
\bigcup_n
P^c_{\chi_1,\chi_2}
(f,\mathcal{H},\eta,n)
\right)
\cap\Lambda
\subset
\Lambda\cap\mathrm{NUH}_{\chi'}^{\#},
\]
one has
\[
\pi^{-1}(x)\cap\Sigma^{\#}
\subseteq
\bigcup_{1\ls j\ls \kappa}[R_j].
\]
Moreover, {for every lift $z\in\pi^{-1}(x)\cap\Sigma^\#$}, the equivariance relation
\[
\pi\circ\sigma=f\circ\pi
\]
yields
\[
\pi\bigl(\sigma^{n_0}(z)\bigr)
=
f^{n_0}\bigl(\pi(z)\bigr)
=
f^{n_0}(x)
=
x,
\] 
where $n_0$ is the period of $x$. Therefore,
\[
{\sigma^{n_0}(z)\in\pi^{-1}(x)\cap\Sigma^\#,}
\]
and hence
\[
\sigma^{n_0}(z)
\in
\bigcup_{1\ls j\ls \kappa}[R_j].
\]

\subsection{Periodic orbits avoiding the magic cylinder}

Let \(w\) be a magic word of \(\Sigma\). In this section, we investigate periodic orbits of \(f\) whose symbolic lifts on \(\Sigma\) never encounter the magic word \(w\) and prove that their exponential growth rate is strictly less than the topological entropy. Define 
\[
P^w_n=\left\{x\in P^c_{\chi_1,\chi_2}
(f,\mathcal{H},\eta,n):
\sigma^j\bigl(\pi^{-1}(x)\cap\Sigma^\#\bigr)\cap[w]=\emptyset,\quad
0\ls j\ls n-1\right\}.
\]

For arbitrary words \(W_1,W_2,W_3\in\mathcal{L}(\Sigma)\), let
\([W_1],[W_2],[W_3]\) be the corresponding cylinder sets in \(\Sigma\).
For $n>\max\{|W_1|,|W_2|,|W_3|\}$, we define
\begin{align*}
\mathcal{C}_n(W_1,W_2;W_3)
=\bigl\{W\in\mathcal{L}_n(\Sigma):\,\,
&[W]\subseteq[W_1],\quad
\sigma^{n-|W_2|}\bigl([W]\bigr)\cap[W_2]\neq\emptyset,\\
&\sigma^j\bigl([W]\bigr)\cap[W_3]=\emptyset,\quad
\forall\,1\ls j<n-|W_3|
\bigr\}.
\end{align*}

\begin{lemma}\label{through}
Let \(V_1,V_2\) be any two vertices of the graph \(\mathcal{G}\). Then
\[
\limsup_{n\to\infty}\frac{1}{n}
\log\sharp\mathcal{C}_n(V_1,V_2;w)
\ls
\limsup_{n\to\infty}\frac{1}{n}
\log\sharp\mathcal{C}_n(w,w;w)
<
h_{\top}(\sigma,\Sigma).
\]
\end{lemma}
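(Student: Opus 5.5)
The plan has two parts: a comparison step for the first inequality, and strong positive recurrence of $\Sigma$ (Theorem \ref{the 5}(1)) for the strict inequality.

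\textbf{First inequality.} Since $\mathcal{G}$ is irreducible, fix once and for all an admissible word $u$ that starts with $w$ and ends with $V_1$, and an admissible word $v$ that starts with $V_2$ and ends with $w$. Let $a=|u|$ and $b=|v|$. For $W\in\mathcal{C}_n(V_1,V_2;w)$, glue $u$, $W$ and $v$ along the shared symbols $V_1$ and $V_2$. The result is an admissible word that begins and ends with $w$.

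Occurrences of $w$ in this glued word cannot start in the part coming from the interior of $W$. So every occurrence starts within a window of bounded length $a+|w|$ at the left end, or $b+|w|$ at the right end. Take the last occurrence of $w$ in the left window and the first occurrence in the right window. The subword running from the start of the first of these to the end of the second lies in $\mathcal{C}_m(w,w;w)$ with $|m-n|\ls a+b+2|w|$.

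This map $W\mapsto$ (middle word, two offsets) is injective, because $W$ is a subword of the middle word at a position fixed by the offsets. The offsets take at most $(a+|w|)(b+|w|)$ values. Hence
\[
\sharp\mathcal{C}_n(V_1,V_2;w)\ls C\max_{|m-n|\ls K}\sharp\mathcal{C}_m(w,w;w),
\]
with constants independent of $n$, and the first inequality follows on taking $\frac1n\log$ and $\limsup$.

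\textbf{Strict inequality.} I would pass to the $|w|$-block presentation $\Sigma^{[|w|]}$ of $\Sigma$. This is again an irreducible, locally compact countable Markov shift, topologically conjugate to $\Sigma$, with the same entropy $h_{\top}(\sigma,\Sigma)$, and $w$ becomes a single vertex $R_w$.

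The conjugacy transports the characterization (2) (or (3)) in Proposition \ref{spr e}: finite unions of cylinders go to finite unions of cylinders, and entropies and weak$^*$ limits are preserved. So $\Sigma^{[|w|]}$ is SPR as well.

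A word in $\mathcal{C}_n(w,w;w)$ corresponds, up to the fixed shift by $|w|-1$ in length, to a first-return loop at $R_w$ of length $n-|w|+1$. Because periodic points of the block shift are in bijection with closed paths, this gives
\[
\sharp\mathcal{C}_n(w,w;w)\ls \mathrm{F}^*_{n-|w|+1}([R_w],0,\sigma).
\]

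Definition \ref{def:recurrence}(2) gives $\limsup\frac1n\log \mathrm{F}^*_n([R],0,\sigma)<h_{\top}$ only for the vertex $R$ used in that definition. The step I expect to be the main obstacle is transferring this to the particular vertex $R_w$. For an irreducible shift, SPR of the zero potential does not depend on the chosen vertex. This is Sarig's (equivalently Gurevich--Savchenko's) result that SPR is equivalent to a spectral gap, or to the discriminant being positive, a condition that is vertex-independent. I would cite this.

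As a fallback argument, I would use Proposition \ref{spr e}(3). Suppose the first-return count at $R_w$ grew at rate $h_{\top}$. Building periodic orbits from concatenations of long first-return loops, as in the renewal (Vere-Jones) argument, gives one of two outcomes. Either the loop generating function fails to be analytic beyond $e^{-h_{\top}}$, which contradicts positive recurrence plus the equivalence in Proposition \ref{spr e}. Or there are ergodic measures of entropy tending to $h_{\top}$ giving $[w]$ vanishing mass. The latter contradicts their weak$^*$ convergence to the fully supported MME, since $[w]$ is clopen and the MME gives it positive mass.
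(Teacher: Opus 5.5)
Your proposal is correct and takes essentially the paper's route. You pass to the $|w|$-block shift so that $w$ becomes a vertex, transfer SPR through the conjugacy via Proposition~\ref{spr e}, identify $\sharp\mathcal{C}_n(w,w;w)$ with a first-return count at that vertex, and compare $\mathcal{C}_n(V_1,V_2;w)$ with $\mathcal{C}_m(w,w;w)$ by gluing fixed connecting paths; you do the gluing directly in $\Sigma$ via the last/first occurrence windows, whereas the paper does it in the block shift by summing over the finitely many $|w|$-blocks at $V_1,V_2$. Your explicit check that SPR can be tested at $R_w$ fills a step the paper uses tacitly: either cite vertex-independence, or argue directly from (3) using suspensions of full shifts on first-return loops, which have entropy tending to $h_{\top}$ and mass $1/n$ on $[R_w]$. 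The exact index is $\mathrm{F}^*_{n-|w|}$ rather than $\mathrm{F}^*_{n-|w|+1}$, which is harmless for growth rates.
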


\begin{proof}
Set \(m=|w|\). Consider the countable Markov shift
\((\Sigma_1,\sigma_1)\) obtained by taking all words of length \(m\) in
\(\Sigma\) as the vertices of the new state graph. Then
\((\Sigma_1,\sigma_1)\) is topologically conjugate to
\((\Sigma,\sigma)\). Recall that the SPR property is invariant under
topological conjugacy; see assertion~(3) of Proposition~\ref{spr e}.

{In the higher-block shift,
\[
\sharp\mathcal{C}_{\Sigma_1,n}(w,w;w)
=
\mathrm{F}^*_{n-1}([w],0,\sigma_1),
\]
and this shift of one in the length does not affect the exponential growth rate.}

By the SPR condition, we obtain the strict entropy bound
\[
\limsup_{n\to\infty}\frac{1}{n}
\log\sharp\mathcal{C}_{\Sigma_1,n}(w,w;w)
<
h_{\top}(\sigma_1,\Sigma_1)
=
h_{\top}(\sigma,\Sigma),
\]
where \(\mathcal{C}_{\Sigma_1,n}\) is defined analogously to
\(\mathcal{C}_n\) using the language \(\mathcal{L}_n(\Sigma_1)\).
Each word of length \(n\) in \(\Sigma_1\) corresponds canonically to a
word of length \(n+m-1\) in \(\Sigma\).

Now fix arbitrary vertices \(V_1,V_2\in\mathcal{V}\). By the local
finiteness of \(\mathcal{G}\), the families
\[
\mathcal{U}_1
=
\left\{
W_1\in\mathcal{L}_m(\Sigma):
[W_1]\subseteq[V_1]
\right\}
\]
and
\[
\mathcal{U}_2
=
\left\{
W_2\in\mathcal{L}_m(\Sigma):
\sigma^{m-1}([W_2])\subseteq[V_2]
\right\}
\]
are finite. Thus, \(\mathcal{U}_1\) consists of the length-\(m\) words
beginning at \(V_1\), whereas \(\mathcal{U}_2\) consists of the
length-\(m\) words ending at \(V_2\).

By topological transitivity, for every
\(W_1\in\mathcal{U}_1\) and \(W_2\in\mathcal{U}_2\), there exist a
connecting path of length \(l(w,W_1)\gs0\) from \(w\) to \(W_1\) and a
return path of length \(l(W_2,w)\gs0\) from \(W_2\) to \(w\), such that
neither path passes through \(w\) at an intermediate vertex. Connecting
these fixed paths gives an injection from $\sharp\mathcal{C}_{\Sigma_1,n}(W_1,W_2;w)$ to $\sharp\mathcal{C}_{\Sigma_1,
n+l(w,W_1)+l(W_2,w)}(w,w;w)$, and hence
\[
\sharp\mathcal{C}_{\Sigma_1,n}(W_1,W_2;w)
\ls
\sharp\mathcal{C}_{\Sigma_1,
n+l(w,W_1)+l(W_2,w)}(w,w;w).
\]

The canonical higher-block identification gives
\[
\sharp\mathcal{C}_{\Sigma_1,n}(W_1,W_2;w)
=
\sharp\mathcal{C}_{n+m-1}(W_1,W_2;w).
\]
Consequently,
\begin{align*}
&\limsup_{n\to\infty}\frac{1}{n}
\log\sharp\mathcal{C}_n(V_1,V_2;w)\\[2mm]
&\ls
\limsup_{n\to\infty}\frac{1}{n}
\log
\sum_{\substack{W_1\in\mathcal{U}_1\\W_2\in\mathcal{U}_2}}
\sharp\mathcal{C}_{\Sigma_1,n-m+1}(W_1,W_2;w)\\[2mm]
&\ls
\limsup_{n\to\infty}\frac{1}{n}
\log
\sum_{\substack{W_1\in\mathcal{U}_1\\W_2\in\mathcal{U}_2}}
\sharp\mathcal{C}_{\Sigma_1,
n-m+1+l(w,W_1)+l(W_2,w)}(w,w;w)\\[2mm]
&\ls
\limsup_{n\to\infty}\frac{1}{n}
\log\sharp\mathcal{C}_{\Sigma_1,n}(w,w;w)\\[2mm]
&<
h_{\top}(\sigma,\Sigma).
\end{align*}
Here the third inequality follows because
\(\mathcal{U}_1\) and \(\mathcal{U}_2\) are finite and all the changes in
word length are bounded independently of \(n\).

Finally, the canonical identification
\[
\sharp\mathcal{C}_{\Sigma_1,n}(w,w;w)
=
\sharp\mathcal{C}_{n+m-1}(w,w;w)
\]
implies
\[
\limsup_{n\to\infty}\frac{1}{n}
\log\sharp\mathcal{C}_{\Sigma_1,n}(w,w;w)
=
\limsup_{n\to\infty}\frac{1}{n}
\log\sharp\mathcal{C}_n(w,w;w).
\]
This proves the lemma.
\end{proof}

\begin{proposition}\label{not see magic}
    \[
\limsup_{n\to \infty} \frac{1}{n} \log\sharp P^w_n <h_{\top}(\sigma,\Sigma).
\]
\end{proposition}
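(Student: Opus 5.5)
The strategy is to combine the strict entropy gap of Lemma \ref{through} with the fact that every point of \(P^w_n\) meets the Pesin block \(\Lambda\) on a fixed proportion of its orbit, and then convert symbolic word counts into counts of actual periodic points using Burguet's asymptotic periodic expansiveness \(g^*_{\mathcal P}=0\) from Section \ref{sec:away from mme}.

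\emph{Step 1: produce admissible words.} Fix \(x\in P^w_n\). By \eqref{b-large} and Proposition \ref{homoclinic}, the orbit \(\mathcal O(x)\) lies in \(\mathcal X\) and spends more than \(3n/4\) of its time in \(\Lambda\). Pick \(i\) with \(f^i(x)\in\Lambda\) and a lift \(z\in\pi^{-1}(f^i x)\cap\Sigma^\#\). By \eqref{finite cy}, together with the periodicity argument following it, \(z_0\in\{R_1,\dots,R_\kappa\}\) and \(z_n\in\{R_1,\dots,R_\kappa\}\). The word \(W=z_{[0,n]}\) starts and ends at some \(R_j,R_k\) and never contains \(w\), since \(x\in P^w_n\). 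Hence \(W\in\mathcal C_{n+1}(R_j,R_k;w)\), up to the harmless boundary positions where the definition of \(\mathcal C\) does not forbid \(w\). There are only \(\kappa^2\) choices of endpoints, so Lemma \ref{through} yields
\[
\limsup_{n\to\infty}\frac1n\log\sharp\bigcup_{j,k}\mathcal C_{n+1}(R_j,R_k;w)\ls h'<h_{\top}(\sigma,\Sigma).
\]

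\emph{Step 2: bound the fibres of \(x\mapsto W\), the main obstacle.} The coding \(\pi\) is only finite-to-one with bounds \(C(a,b)\) that depend on the symbols, so distinct periodic points might share a word, and a finite word does not determine a point. I would handle this as follows. Each finite word \(W\) with fixed endpoints \(R_j,R_k\) determines a cylinder whose \(\pi\)-image lies in a Bowen ball \(B_n(y_W,\delta)\) for some \(y_W\), because \(\pi\) is Hölder and the cylinder constrains \(n\) consecutive symbols, and symbols in a finite collection have uniformly small images. Points of \(P^w_n\) assigned to \(W\) are therefore \(n\)-periodic points in \(\mathcal P\) lying in a single Bowen ball of radius \(\delta\). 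By Burguet's estimate \(g^*_{\mathcal P}=0\), for \(\delta\) small there are at most \(e^{\epsilon n}\) such points for large \(n\). The choice of shift \(i\) adds only a factor \(n\). If the diameter of \(\pi([R])\) is not small, I would first pass to a higher block \(\Sigma_1\) of length \(2L+1\) centred at the \(R_j\)'s, as in the proof of Lemma \ref{through}, so that cylinders over finitely many words have diameter below \(\delta\). Lemma \ref{through} still applies with \(V_1,V_2\) replaced by finitely many words.

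\emph{Step 3: conclude.} Combining the two steps gives
\[
\sharp P^w_n\ls n\,e^{\epsilon n}\sum_{j,k}\sharp\mathcal C_{n+1}(R_j,R_k;w).
\]
Hence \(\limsup_{n}\frac1n\log\sharp P^w_n\ls h'+\epsilon\), and choosing \(\epsilon<h_{\top}(\sigma,\Sigma)-h'\) proves the proposition. The delicate point throughout is Step 2: making precise that a symbolic word of length \(n\) with endpoints in a fixed finite set pins the corresponding points into one \((n,\delta)\)-Bowen ball, uniformly in \(n\). This uses the Hölder continuity of \(\pi\) on \(\Sigma\) with its standard metric, together with the local compactness of \(\Sigma\) near the finitely many vertices \(R_j\).
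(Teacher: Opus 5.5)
Your argument is correct and is essentially the paper's proof. It uses the same ingredients: a return of the orbit to $\Lambda$, lifts beginning and ending in $\{R_1,\dots,R_\kappa\}$, Lemma \ref{through}, Burguet's local bound, and Hölder continuity of $\pi$ together with local finiteness. The only organizational difference is that the paper phrases your Step 2 through a maximal $(n,\delta)$-separated set whose lifts are $(n,\delta_1)$-separated, rather than bounding the fibres of the word map directly.

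Your higher-block fallback is needed in general, not only when $\pi([R])$ happens to be large. The cylinder over $z_{[0,n]}$ pins only a one-sided block at times near $0$ and $n$, and $\delta$ is dictated by Burguet's estimate, so it can be arbitrarily small. You must therefore always extend to $z_{[-L,n+L]}$, at the cost of a bounded factor by local finiteness; this is exactly the paper's $N_1$ and $C_1$.
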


\begin{proof}
We first establish a uniform upper bound for the growth rate of admissible symbolic cylinder sets, which serves as the key quantitative ingredient for the final entropy estimate.

Take arbitrary vertex states $R_i, R_j\in \mathcal{V}$ appearing in \eqref{finite cy} with $1\ls i,j\ls \kappa$. By Lemma \ref{through}, the cardinality of finite admissible orbit cylinders satisfies
\begin{align}
\limsup_{n\to\infty}\frac{1}{n}
\log\sharp\mathcal{C}_n(R_i,R_j;w)
&\ls
\limsup_{n\to\infty}\frac{1}{n}
\log\sharp\mathcal{C}_n(w,w;w) \notag\\[2mm]
&
<
h_{\top}(\sigma,\Sigma).
\label{small minset}
\end{align}

Define the critical growth constant
\[
c_0:=\limsup_{n\to \infty} \frac{1}{n} \log\sharp  \mathcal{C}_{n}(w, w; w),
\]
so we have $c_0< h_{\top}(\sigma,\Sigma)$.

The core local packing estimate is provided by the following lemma, which controls the number of periodic points inside any dynamical ball of fixed scale.

\begin{lemma}\label{small loc}
For any $\rho>0$, there exist $\delta>0$ sufficiently small such that for $\chi_1$, $\chi_2$, and $\eta$ defined as in \Cref{sec 4.3}, all sufficiently large $n\in\mathbb N$ and every orbit point $x\in P^c_{\chi_1,\chi_2}
(f,\mathcal{H},\eta,n)$,
\[
 \sharp \bigl(P^c_{\chi_1,\chi_2}
(f,\mathcal{H},\eta,n) \cap B_n(x, \delta)\bigr)<e^{n\rho}.
\]
\end{lemma}

\begin{proof}[Proof of Lemma \ref{small loc}]
Set
\[
\chi_*:=\min\{\chi_1,-\chi_2\}>0,
\]
and let $\mathcal{P}_{\chi_*}$ denote the set of saddle periodic
points of $f$ whose Lyapunov exponents lie outside the interval
$[-\chi_*,\chi_*]$. For every $n\in\mathbb{N}$, set
\[
\mathcal{P}_{\chi_*,n}
:=
\bigl\{
p\in\mathcal{P}_{\chi_*}:f^n(p)=p
\bigr\}.
\]

By definition, it follows that
\[
P^c_{\chi_1,\chi_2}
(f,\mathcal{H},\eta,n)
\subseteq
\mathcal{P}_{\chi_*,n}
\]
for every $n\in\mathbb{N}$.
By the asymptotically periodic expansive property established by
Burguet \cite{Bur},
\[
g^*_{\mathcal{P}_{\chi_*}}=0.
\]
Hence, for the given $\rho>0$, there exists $\delta>0$ sufficiently
small such that
\[
g_{\mathcal{P}_{\chi_*}}(\delta)
=
\limsup_{n\to\infty}
\frac{1}{n}
\sup_{y\in M}
\log
\sharp\bigl(
\mathcal{P}_{\chi_*,n}\cap B_n(y,\delta)
\bigr)
<
\frac{\rho}{2}.
\]
It follows that there
exists $n_0\in\mathbb{N}$ such that, for every $n\gs n_0$,
\[
\frac{1}{n}
\sup_{y\in M}
\log
\sharp\bigl(
\mathcal{P}_{\chi_*,n}\cap B_n(y,\delta)
\bigr)
<
\rho.
\]
Equivalently, for every $n\gs n_0$,
\[
\sup_{y\in M}
\sharp\bigl(
\mathcal{P}_{\chi_*,n}\cap B_n(y,\delta)
\bigr)
<
e^{n\rho}.
\]
Now fix $n\gs n_0$ and
\(
x\in
P^c_{\chi_1,\chi_2}
(f,\mathcal{H},\eta,n).
\)
Using the inclusion established above, we obtain
\[
\begin{aligned}
\sharp\Bigl(
P^c_{\chi_1,\chi_2}
(f,\mathcal{H},\eta,n)
\cap B_n(x,\delta)
\Bigr)
&\ls
\sharp\bigl(
\mathcal{P}_{\chi_*,n}
\cap B_n(x,\delta)
\bigr)\\[2mm]
&\ls
\sup_{y\in M}
\sharp\bigl(
\mathcal{P}_{\chi_*,n}
\cap B_n(y,\delta)
\bigr)\\[2mm]
&<
e^{n\rho}.
\end{aligned}
\]
This proves the lemma.
\end{proof}

We now complete the main proposition via separated set counting and symbolic coding entropy comparison.

Fix
\[
0<\rho<h_{\top}(\sigma,\Sigma)-c_0,
\]
and let $\delta>0$ be given by Lemma \ref{small loc}. For every sufficiently large $n$, let
\[
S_n\subset P_n^w
\]
be a maximal $(n,\delta)$-separated subset. Then
\[
P_n^w\subseteq \bigcup_{x\in S_n} B_n(x,\delta).
\]

By \eqref{b-large}, the periodic orbit of every $x\in S_n$ intersects the Pesin block $\Lambda$. For each $x\in S_n$, choose an integer
\[
k(x)\in\{0,\dots,n-1\}
\]
such that
\[
f^{k(x)}(x)\in\Lambda.
\]
For each $0\ls k\ls n-1$, define
\[
S_{n,k}:=\{x\in S_n:k(x)=k\},
\qquad
Y_{n,k}:=f^k(S_{n,k}).
\]
Writing
\[
d_n(x,y):=\max_{0\ls r<n}d(f^r(x),f^r(y)),
\]
the $n$-periodicity of all points under consideration gives
\[
d_n(f^k(x),f^k(y))=d_n(x,y).
\]
Hence, every $Y_{n,k}$ is an $(n,\delta)$-separated subset of $\Lambda$.

By definition, it follows that
\[
Y_{n,k}\subseteq P_n^w\cap\Lambda.
\]

By Proposition \ref{homoclinic} and Theorem \ref{the 5}, every
$y\in Y_{n,k}$ admits a regular symbolic lift. Choose one point
\[
z_y\in\pi^{-1}(y)\cap\Sigma^\#
\]
for each $y\in Y_{n,k}$, and set
\[
\widetilde{Y}_{n,k}:=\{z_y:y\in Y_{n,k}\}.
\]
Then
\[
\sharp\widetilde{Y}_{n,k}
=
\sharp Y_{n,k}
=
\sharp S_{n,k}.
\]

Since $\pi$ is H\"older continuous, there exist constants $C>0$ and
$\beta\in(0,1)$ such that
\[
d(\pi(u),\pi(v))
\ls
C\,d_\Sigma(u,v)^\beta
\]
for all $u,v\in\Sigma$. Define
\[
\delta_1:=(C^{-1}\delta)^{1/\beta}.
\]
If $z_y,z_{y'}\in\widetilde{Y}_{n,k}$ are distinct, then the
$(n,\delta)$-separation of $Y_{n,k}$ gives an integer
$0\ls r<n$ such that
\[
\delta
\ls
d(f^r(y),f^r(y'))
=
d\bigl(\pi(\sigma^r(z_y)),\pi(\sigma^r(z_{y'}))\bigr).
\]
It follows that
\[
d_\Sigma(\sigma^r(z_y),\sigma^r(z_{y'}))
\gs
\delta_1.
\]
Thus $\widetilde{Y}_{n,k}$ is an $(n,\delta_1)$-separated subset of
$\Sigma$.

Since $y\in\Lambda\cap\mathrm{NUH}_{\chi'}^\#$, relation
\eqref{finite cy} implies that
\[
z_y\in[R_i]
\]
for some $1\ls i\ls \kappa$. Furthermore,
\[
\pi(\sigma^n(z_y))
=
f^n(\pi(z_y))
=
f^n(y)
=
y,
\]
and $\sigma^n(z_y)\in\Sigma^\#$. Hence
\[
\sigma^n(z_y)\in[R_j]
\]
for some $1\ls j\ls \kappa$. Since $y\in P_n^w$, the length-$(n+1)$ word
\(
(z_y)_{[0,n]}
\)
does not encounter $w$ and therefore belongs to
\(
\mathcal{C}_{n+1}(R_i,R_j;w).
\)

Fix $N_1\in\mathbb{N}$ sufficiently large so that
\(
e^{-N_1}<\delta_1.
\)
Because the graph $\mathcal{G}$ is locally finite, the number of
admissible extensions of a word
\(
W\in\mathcal{C}_{n+1}(R_i,R_j;w)
\)
from the coordinate interval $[0,n]$ to
\(
[-N_1,n+N_1]
\)
is finite. Since the initial and terminal vertices range over the
finite set $\{R_1,\dots,R_\kappa\}$, there exists a constant $C_1>0$,
independent of $n$, $i$, $j$, and $W$, such that every such word has
at most $C_1$ admissible extensions.

If two points $z,z'\in\widetilde{Y}_{n,k}$ determine the same extended
block on $[-N_1,n+N_1]$, then, for every $0\ls r<n$, the sequences
$\sigma^r(z)$ and $\sigma^r(z')$ agree on all coordinates
$[-N_1,N_1]$. Therefore,
\[
d_\Sigma(\sigma^r(z),\sigma^r(z'))
\ls
e^{-(N_1+1)}
<
\delta_1,
\]
contradicting the $(n,\delta_1)$-separation of
$\widetilde{Y}_{n,k}$. Consequently, each word in
$\mathcal{C}_{n+1}(R_i,R_j;w)$ corresponds to at most $C_1$ points of
$\widetilde{Y}_{n,k}$. Hence
\[
\sharp S_{n,k}
=
\sharp\widetilde{Y}_{n,k}
\ls
C_1
\sum_{1\ls i,j\ls \kappa}
\sharp\mathcal{C}_{n+1}(R_i,R_j;w).
\]
Summing over $0\ls k\ls n-1$, we obtain
\[
\sharp S_n
\ls
nC_1
\sum_{1\ls i,j\ls \kappa}
\sharp\mathcal{C}_{n+1}(R_i,R_j;w).
\]

Finally, the covering property of $S_n$ and Lemma \ref{small loc}
give
\[
\begin{aligned}
\sharp P_n^w
&\ls
\sum_{x\in S_n}
\sharp\bigl(P_n^w\cap B_n(x,\delta)\bigr)\\[2mm]
&\ls
\sum_{x\in S_n}
\sharp\Bigl(
P^c_{\chi_1,\chi_2}(f,\mathcal{H},\eta,n)
\cap B_n(x,\delta)
\Bigr)\\[2mm]
&<
e^{n\rho}\sharp S_n.
\end{aligned}
\]
Therefore,
\begin{eqnarray*}
\limsup_{n\to\infty}
\frac{1}{n}\log\sharp P_n^w
&\ls&
 \limsup_{n\to\infty}
\frac{1}{n}
\log\left(
nC_1
\sum_{1\ls i,j\ls \kappa}
\sharp\mathcal{C}_{n+1}(R_i,R_j;w)
\right)
+\rho\\[2mm]
&\ls&
c_0+\rho.
\end{eqnarray*}
Here the last inequality follows from Lemma \ref{through}, the
finiteness of the set of pairs $(R_i,R_j)$, and the fact that replacing
$n$ by $n+1$ does not change the exponential growth rate. Since
\[
\rho<h_{\top}(\sigma,\Sigma)-c_0,
\]
we conclude that
\[
\limsup_{n\to\infty}
\frac{1}{n}\log\sharp P_n^w
<
h_{\top}(\sigma,\Sigma),
\]
which completes the proof of the proposition.
\end{proof}

\section{Periodic orbits visiting the magic cylinder}\label{see}

{
We now count the periodic orbits that admit a symbolic lift on $\Sigma$ seeing the magic word $w$. Such orbits admit quotient lifts in the SPR component $Z$, and Theorem 5.3 of \cite{buzzi2020} guarantees that $\pi_N$ is injective on the set of quotient lifts related to $w$. Recall the fixed magic couple $(w,i)$ and set
\[
V_0:=A_w
=
\left\{
v_i:
v\in\mathcal{L}_{|w|}(\Sigma^\#),\ v\sim w
\right\}
\in\mathcal V_N.
\]
Since $(w,i)$ is a magic couple and $N=\delta_{\mathrm{rec}}(\sim)$, we have
\[
\sharp V_0
=
\delta_\sim(w,i)
=
\delta_{\mathrm{rec}}(\sim)
=
N.
\]
}

\subsection{Dynamics on magic cylinder}

\begin{proposition}\label{see magic}
{
For any periodic point $x\in\mathcal{H}$ admitting a lift on $\Sigma$ that encounters the magic word $w$, every quotient lift
\[
\hat y\in\pi_N^{-1}(x)\cap Z^\#
\]
that is related to $w$ visits the cylinder $[V_0]_Z$.
}
\end{proposition}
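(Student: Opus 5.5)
The plan is to unwind the definition of ``related to $w$'' and show directly that, at each occurrence position of $w$ inside $\hat y$, the coordinate of $\hat y$ at offset $i$ is forced to equal $V_0$. This is essentially the content of Claim 5.5 of \cite{buzzi2020}, which we already invoked in the SPR component proposition. Fix $\hat y\in\pi_N^{-1}(x)\cap Z^\#$ related to $w$. By definition there is an integer $p$ (in fact infinitely many, in both directions) with $w_k\in\hat y_{p+k}$ for all $0\ls k<|w|$. The goal is to prove $\hat y_{p+i}=V_0$, so that $\sigma_N^{p+i}(\hat y)\in[V_0]_Z$.

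\textbf{Step 1: $V_0\subseteq\hat y_{p+i}$.} Take $a\in V_0$. Then $a=v_i$ for some $v\in\mathcal L_{|w|}(\Sigma^\#)$ with $v\sim w$. I want to show that $v$ is realized coordinatewise inside $\hat y_{[p,p+|w|)}$. Consider the canonical lift $Q(\hat y,w_0)$ shifted to position $p$, given by Lemma \ref{unique path}. By uniqueness, this lift reads $w$ on $[p,p+|w|)$, since $w$ is a path in $\mathcal G$ compatible with the vertices $\hat y_{p+k}$.

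Next, every symbol of $\hat y_{p+k}$ is Bowen related to $w_k$, because the elements of a vertex of $\mathcal G_N$ are pairwise $\sim$. For the converse direction I use the Bowen property of $\pi$. Any two lifts $Q(\hat y,a)$ and $Q(\hat y,b)$ project to the same point $x$. Conversely, any $z\in\pi^{-1}(x)\cap\Sigma^\#$ is coordinatewise $\sim$-related to $q_N(\hat y)$.

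The key fact needed is that the $N$ lifts $\{Q(\hat y,a)\}$ exhaust the words $v\sim w$ sitting at position $p$ that extend to regular sequences Bowen-equivalent to $q_N(\hat y)$. Since $x$ is periodic and its lift sees $w$, the fiber $\pi^{-1}(x)\cap\Sigma^\#$ is Bowen-saturated and contains at most $N$ points. This last bound follows from minimality of $\delta_{\rm rec}$ together with Theorem 4.6 of \cite{buzzi2020}, applied along the recurrent periodic lift.

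\textbf{Step 2: counting.} Both sides have cardinality $N$: $\sharp\hat y_{p+i}=N$ because $\hat y_{p+i}\in\mathcal V_N$, and $\sharp V_0=N$ as recorded above. So it suffices to prove the single inclusion $\hat y_{p+i}\subseteq V_0$, which I expect to be the easier direction.

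Let $b\in\hat y_{p+i}$ and set $z=Q(\hat y,b)$. By Lemma \ref{unique path} and Theorem \ref{well-d}, $z$ lies in $\Sigma^\#$. The word $v:=z_{[p,p+|w|)}$ belongs to $\mathcal L_{|w|}(\Sigma^\#)$. For each $k$, both $v_k$ and $w_k$ lie in $\hat y_{p+k}$, so $v_k\sim w_k$, and hence $v\sim w$. Therefore $b=v_i\in V_0$. Equality then follows from the cardinality count, and Step 1 is not needed at all.

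The only point I expect to require care is confirming that $\mathcal L(\Sigma^\#)$ is the language used in the definition of $V_0$, so that $v$ qualifies; this holds because $z\in\Sigma^\#$. Since $p$ can be taken arbitrarily large in both directions, $\hat y$ visits $[V_0]_Z$ infinitely often forward and backward, which is consistent with $\hat y\in Z^\#$.
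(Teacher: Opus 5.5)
Your Step 2 is correct and is the paper's proof. Each $b\in\hat y_{p+i}$ lies on a lift $Q(\hat y,\cdot)\in\Sigma^\#$ whose window on $[p,p+|w|)$ is a word of $\mathcal{L}_{|w|}(\Sigma^\#)$ Bowen-related to $w$, because its symbols and those of $w$ lie in the same vertices $\hat y_{p+k}$. This gives $\hat y_{p+i}\subseteq V_0$, and equality follows from $\sharp\hat y_{p+i}=\sharp V_0=N$.

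Step 1 is unnecessary and can be deleted, including its appeal to a fiber bound $\sharp(\pi^{-1}(x)\cap\Sigma^\#)\le N$; the hypothesis on $x$ plays no role. One notational point: to lift from $b\in\hat y_{p+i}$, apply Lemma \ref{unique path} to $\sigma_N^{p+i}\hat y$, or write $b=z^{\xi}_{p+i}$ for some $\xi\in\hat y_0$.
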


\begin{proof}
{
Fix such a quotient lift $\hat y\in\pi_N^{-1}(x)\cap Z^\#$. Since $\hat y$ is related to $w$, there exist infinitely many integers $p\gs0$ and infinitely many integers $p\ls0$ such that
\[
w_j\in\hat y_{p+j}
\quad\text{for every }0\ls j<|w|.
\]
From Lemma \ref{unique path}, for all $n\in\mathbb Z$,
\[
\hat y_n
=
\left\{
z_n^\xi:
\xi\in\hat y_0
\right\},
\]
where $z^\xi=Q(\hat y,\xi)$ for each $\xi\in\hat y_0$. Thus, for every $\xi\in\hat y_0$, the finite word
\[
z_p^\xi\cdots z_{p+|w|-1}^\xi
\sim w.
\]
At the fixed magic position $i$, this implies
\[
\hat y_{p+i}\subseteq V_0.
\]
Since
\[
\sharp\hat y_{p+i}
=
\sharp V_0
=
N,
\]
we obtain
\[
\hat y_{p+i}=V_0.
\]
Hence the orbit of $\hat y$ visits $[V_0]_Z$.
}
\end{proof}

Recall that $Z$ is the irreducible component of $\Sigma_N$ containing {$V_0$}. Proposition \ref{see magic} immediately yields:

\begin{corollary}\label{injective per}
For every $n$-periodic point $x\in\mathcal{H}$ with a symbolic lift on $\Sigma$ seeing the magic word $w$, there exists a unique $n$-periodic point $\hat y\in Z$ that necessarily visits the cylinder {$[V_0]_Z$}.
\end{corollary}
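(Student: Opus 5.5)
The plan has three parts: construct a periodic quotient lift related to $w$, show it lies in $Z$ and visits $[V_0]_Z$, and derive uniqueness from the injectivity of $\pi_N$ on lifts related to $w$.

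\emph{Existence.} Let $x\in\mathcal H$ have period $n$, and let $z\in\pi^{-1}(x)\cap\Sigma^\#$ be a lift that sees $w$ at some position $p$. The fiber $\pi^{-1}(x)\cap\Sigma^\#$ is finite by Theorem \ref{the 5}(2), and $\sigma^n$ permutes it because $\pi\circ\sigma=f\circ\pi$. Hence some power $\sigma^{nk}$ fixes $z$, so $z$ is periodic and sees $w$ infinitely often in both time directions. Since $z$ is periodic, it belongs to $\Sigma^{\mathrm{rec}}$.

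The Bowen property shows that the fiber of $x$ consists of all regular sequences coordinatewise $\sim$-related to $z$. By Theorem 4.6 of \cite{buzzi2020}, as already used in the SPR component proposition, a point whose lift sees a magic word i.o. has fiber of cardinality exactly $N=\delta_{\mathrm{rec}}(\sim)$. The coordinatewise sets $\hat y_m:=\{u_m:u\in\pi^{-1}(x)\cap\Sigma^\#\}$ then form an element $\hat y\in\Sigma_N^\#$ with $\pi_N(\hat y)=x$. This mirrors the construction of the isomorphism $\mu_2$ in that proof, following Theorem 5.2 of \cite{buzzi2020}.

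Because $\sigma^n$ permutes the fiber, $\sigma_N^n\hat y=\hat y$, so $\hat y$ is $n$-periodic, not merely $nk$-periodic. Since $z\in\pi^{-1}(x)$ has $z_m\in\hat y_m$ and $z$ sees $w$ i.o., $\hat y$ is related to $w$. By the SPR component proposition, $\hat y\in Z$, and Proposition \ref{see magic} shows that $\hat y$ visits $[V_0]_Z$.

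\emph{Uniqueness.} Let $\hat y'\in Z$ be another $n$-periodic point with $\pi_N(\hat y')=x$. Since $\hat y'$ is periodic, it lies in $Z^\#$. The main obstacle is to show that $\hat y'$ is also related to $w$; only then can Theorem 5.3 of \cite{buzzi2020}, the injectivity of $\pi_N$ on quotient lifts related to $w$, be applied to give $\hat y'=\hat y$.

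For this step I would argue as follows. Every lift $Q(\hat y',a)$ lies in $\pi^{-1}(x)\cap\Sigma^\#$, a fiber of cardinality $N$. There are $N$ such lifts, and they are pairwise distinct, because the symbols of $\hat y'_0$ are distinct. Hence they exhaust the fiber, and $\hat y'_m=\hat y_m$ for all $m$. If that count fails, I would fall back on $\delta_{\mathrm{rec}}(\pi_N|_{Z^\#})=1$ from Claim 5.5 of \cite{buzzi2020}, which makes $\pi_N$ injective on $Z^\#$ points through the magic symbol, and I would use that $\hat y'$, as a periodic point of the irreducible component $Z$, can be compared via its fiber with $\hat y$.
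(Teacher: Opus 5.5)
Your proof is correct. Its skeleton matches the paper's: a quotient lift related to $w$ lies in $Z$ by the SPR-component proposition, and it visits $[V_0]_Z$ by Proposition~\ref{see magic}. The differences are in existence and, above all, in uniqueness.

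\textbf{How your route differs.} The paper only asserts that such orbits admit quotient lifts in $Z$, and takes uniqueness from Theorem 5.3 of \cite{buzzi2020}, namely injectivity of $\pi_N$ on quotient lifts related to $w$. You instead build $\hat y$ explicitly from the fiber $F:=\pi^{-1}(x)\cap\Sigma^\#$. This makes both $\sigma_N^n\hat y=\hat y$ and the relation of $\hat y$ to $w$ transparent.

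You then prove uniqueness by counting. Any $\hat y'\in\pi_N^{-1}(x)\cap\Sigma_N^\#$ has $N$ distinct lifts $Q(\hat y',a)$, all lying in $F$. So $\sharp F=N$ forces $\hat y'_m=\hat y_m$ for every $m$. This is exactly the bound $\sharp\pi_N^{-1}(x)\ls\binom{\sharp\pi^{-1}(x)}{N}=1$ of Theorem~\ref{well-d}(3). It gives uniqueness in all of $\Sigma_N^\#$, with no need to know beforehand that $\hat y'$ is related to $w$. The appeal to Theorem 5.3 tacitly requires that knowledge. So the ``main obstacle'' you name never arises, and the vague fallback in your last sentence should simply be deleted.

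\textbf{What your route costs.} Everything rests on $\sharp F=N$ for this particular periodic point, whereas the paper invokes Theorem 4.6 of \cite{buzzi2020} only almost everywhere. It also rests on the sets $\hat y_m$ having exactly $N$ elements and forming a path of $\mathcal G_N$, which you assert without argument. For a periodic fiber both facts are elementary.

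\begin{itemize}
\item Suppose $u\neq u'$ in $F$ agree at one coordinate. Some $\sigma^{nk}$ fixes both, so they agree along an arithmetic progression and differ in every window in between. Splicing independently in each window then produces uncountably many regular sequences, each coordinatewise $\sim$-related to $z$ and hence in $F$. This contradicts finiteness of $F$.
\item So points of $F$ have pairwise distinct symbols at every coordinate. At a magic coordinate this gives $\sharp\hat y_m=\sharp F\ls\sharp V_0=N$.
\item Splicing at a single coordinate likewise excludes cross edges $u_m\to u'_{m+1}$ with $u\neq u'$. So $\hat y$ is a genuine $\mathcal G_N$-path.
\item The reverse inequality $\sharp F\gs N$ follows from minimality of $\delta_{\mathrm{rec}}(\sim)$ applied to the recurrent words $z_{[-L,L]}$. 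Combine this with a compactness argument that uses local finiteness of $\sim$.
\end{itemize}

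With these details written out, your argument is more self-contained than the paper's.
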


{ 
For a finite word}
\[
\underline a=a_0\cdots a_{r-1},
\qquad r\gs1,
\]
write $|\underline a|=r$. Define the return-word alphabet
\[
\bar{\mathcal V}
=
\left\{
[\underline a]:
\begin{array}{l}
|\underline a|\gs1,\quad
a_j=V_0\Longleftrightarrow j=0,\\[1mm]
[a_0\cdots a_{r-1}V_0]_Z\neq\emptyset
\end{array}
\right\},
\]
and the induced one-sided full shift
\[
\bar\Sigma
=
\bar{\mathcal V}^{\mathbb N\cup\{0\}}.
\]
Denote the left shift on $\bar\Sigma$ by $\bar\sigma$. Concatenating the
return words determines the nonnegative coordinates of a point in
$[V_0]_Z$. Choose an arbitrary compatible negative itinerary and denote the
resulting Borel choice by
\[
\bar p:\bar\Sigma\longrightarrow[V_0]_Z.
\]
Thus, for
\[
\bar x=
\bigl(
[\underline a^{\,0}],
[\underline a^{\,1}],
\dots
\bigr),
\]
the nonnegative coordinates of $\bar p(\bar x)$ are
\[
\underline a^{\,0}\underline a^{\,1}\cdots.
\]
Let
\[
H_{V_0}(\hat x)
=
\mathbf 1_{[V_0]_Z}(\hat x)
\min
\left\{
n\gs1:
\sigma_Z^n(\hat x)\in[V_0]_Z
\right\},
\qquad
\hat x\in Z,
\]
and set
\[
H_0(\bar x):=|\underline a^{\,0}|
=H_{V_0}\bigl(\bar p(\bar x)\bigr).
\]
For every function $u:Z\to\mathbb R$, define the induced function
\[
\bar u
=
\left(
\sum_{k=0}^{H_0-1}
u\circ\sigma_Z^k
\right)
\circ\bar p.
\]
Note that $H_0$ is locally H\"older continuous since it depends only on the zero-th coordinate.

{
Let
\[
h
:=
h_\mu(f)
=
h_{\mathrm{top}}(f,\H)
=
h_{\mathrm{top}}(\sigma_Z,Z)
=
P_G(\Sigma,0).
\]
Since
\[
h
=
\sup
\left\{
h_{\widetilde\nu}(\sigma_Z):
\widetilde\nu\in\mathcal M(\sigma_Z,Z)
\right\},
\]
we may restrict to measures $\widetilde\nu$ obtained from some
\[
\bar\nu\in\mathcal M(\bar\sigma,\bar\Sigma).
\]
To interpret the two-sided lift, let $\bar\nu^\natural$ be the natural
extension of $\bar\nu$ to
\[
\bar\Sigma^\natural:=\bar{\mathcal V}^{\mathbb Z},
\]
and let
\[
\bar p^\natural:\bar\Sigma^\natural\longrightarrow[V_0]_Z
\]
be the bi-infinite concatenation of return words. Then
\[
\widetilde\nu
=
\frac{1}{\int H_0\,\td\bar\nu}
\sum_{k=0}^{\infty}
(\sigma_Z^k\circ\bar p^\natural)_*
\left(
\mathbf 1_{\{H_0>k\}}\bar\nu^\natural
\right).
\]
Abramov's theorem \cite{LM} gives
\[
h_{\widetilde\nu}(\sigma_Z)
=
\frac{h_{\bar\nu}(\bar\sigma)}
{\int H_0\,\td\bar\nu}\ls h.
\]
Recall that  $\mu_Z$ is the MME on $Z$, which induces $\bar{\sigma}$-invariant measure $\bar{\mu}$ on $\bar{\Sigma}$.  Therefore,
\[
h_{\bar\mu}(\bar\sigma)
-
h\int H_0\,\td\bar\mu=
0,
\]
which meanwhile yields \[
P_G(\bar\Sigma,-hH_0)
:=
\sup
\left\{
h_{\bar\nu}(\bar\sigma)
-
h\int H_0\,\td\bar\nu:
\bar\nu\in\mathcal M(\bar\sigma,\bar\Sigma)
\right\}
=
0.
\]
That is, the MME $\mu_Z$ on $Z$ projecting to $\mu$ corresponds to the equilibrium state $\bar\mu$ for $-hH_0$ on $\bar\Sigma$. Since $\bar\Sigma$ is a full shift and $H_0$ depends only on the zero-th coordinate,
\[
\sum_{x\in\operatorname{Fix}_1(\bar\sigma)}
\e^{-hH_0(x)}
=
\e^{P_G(\bar\Sigma,-hH_0)}
=
1.
\]
}

{
For $s\in\mathbb C$ near $1$, let
\[
\mathcal L_s
:=
\overline{\mathcal L}_{-shH_0}
\]
be the Ruelle transfer operator on functions $g:\bar\Sigma\to\mathbb C$:
\[
(\mathcal L_sg)(x)
=
\sum_{\bar\sigma(y)=x}
\e^{-shH_0(y)}g(y).
\]
Define a metric on $\bar\Sigma$ by
\[
d_1(x,y)
=
\e^{-\min\{n:x_n\neq y_n\}},
\]
and, for $g\in C(\bar\Sigma)$, let
\[
\|g\|_\infty
=
\sup_{x\in\bar\Sigma}|g(x)|,
\qquad
Dg
=
\sup
\left\{
\frac{|g(x)-g(y)|}{d_1(x,y)}:
x\neq y
\right\}.
\]
Let $\operatorname{Lip}_{\mathbb C}(\bar\Sigma)$ be the Banach space of functions $g\in C(\bar\Sigma)$ with norm
\[
\|g\|_{\operatorname{Lip}}
=
\|g\|_\infty+Dg
<
\infty.
\]
}


{
Since $\bar\mu$ is the equilibrium state of $-hH_0$ on $\bar\Sigma$, it follows from \cite[Theorem 1.2]{Buzzi-Sarig} and \cite[Theorem 4]{OS} that $-hH_0$ is positive recurrent. Then, by \cite[Lemma 4]{OS.1}, the spectrum of
\[
\mathcal L_1:
\operatorname{Lip}_{\mathbb C}(\bar\Sigma)
\longrightarrow
\operatorname{Lip}_{\mathbb C}(\bar\Sigma)
\]
consists of a simple eigenvalue $1$ and a subset of
\[
\{z:|z|<\tau\}
\]
for some constant $\tau<1$. Furthermore, by virtue of the SPR property, $\mathcal L_s$ depends holomorphically on $s$; see \cite[Lemma 6]{OS.1}. Consequently, for every $s\in\mathbb C$ sufficiently close to $1$, the operator $\mathcal L_s$ admits a simple eigenvalue $\lambda(s)$ and a corresponding eigenfunction $g_s$, both depending holomorphically on $s$. For real $s$ near $1$,
\[
\lambda(s)
=
\e^{P_G(\bar\Sigma,-shH_0)}.
\]
After shrinking the neighborhood if necessary, choose the holomorphic logarithm $p(s)$ with $p(1)=0$ so that
\[
\lambda(s)=\e^{p(s)}.
\]
}

\subsection{Dynamical zeta function and meromorphic extension}

{
For a function $\Phi:\bar\Sigma\to\mathbb C$, write
\[
S_n\Phi(x)
:=
\sum_{j=0}^{n-1}
\Phi(\bar\sigma^jx).
\]
}

\begin{definition}[Dynamical Zeta Function]
{
The dynamical zeta function of the shift $\bar\sigma$ on $\bar\Sigma$ with respect to the function $-hH_0$ is defined as
\[
\zeta(s)
=
\prod_\gamma
\frac{1}
{1-\e^{-sh\ell(\gamma)}},
\qquad
\Re(s)>1,
\]
where the product is taken over all primitive periodic orbits $\gamma$ and
\[
\ell(\gamma)
:=
S_{|\gamma|}H_0(x)
\]
for some, and hence any, $x\in\gamma$. Here $|\gamma|$ is the period of $\gamma$ under $\bar\sigma$, while $\ell(\gamma)$ is its corresponding return-time length in $Z$.
}
\end{definition}

{
Equivalently,
\[
\zeta(s)
=
\exp
\left[
\sum_{n=1}^\infty
\frac1n
\sum_{x\in\operatorname{Fix}_n(\bar\sigma)}
\exp\bigl(-shS_nH_0(x)\bigr)
\right],
\]
where
\[
\operatorname{Fix}_n(\bar\sigma)
:=
\left\{
x\in\bar\Sigma:
\bar\sigma^n(x)=x
\right\}.
\]
Note that
\[
\zeta\left(s+\i\frac{2\pi}{h}\right)
=
\zeta(s).
\]

For a complex-valued function $\Phi:\bar\Sigma\to\mathbb C$, let
\[
Q_1(z,\Phi)
=
\exp
\left[
-\sum_{n=1}^\infty
\frac{z^n}{n}
\sum_{x\in\operatorname{Fix}_n(\bar\sigma)}
\exp\bigl(S_n\Phi(x)\bigr)
\right].
\]
If $\Phi$ depends only on the zero-th coordinate, then
\[
Q_1(z,\Phi)
=
1
-
z\sum_{x\in\operatorname{Fix}_1(\bar\sigma)}
\exp\bigl(\Phi(x)\bigr).
\]
Thus,
\[
\zeta(s)
=
\frac{1}{Q_1(1,-shH_0)}
=
\frac{1}
{1-\displaystyle\sum_{x\in\operatorname{Fix}_1(\bar\sigma)}
\exp\bigl(-shH_0(x)\bigr)}.
\]
For $\Re(s)>1$, the series in the denominator converges absolutely, since
\[
\sum_{x\in\operatorname{Fix}_1(\bar\sigma)}
\e^{-\Re(s)hH_0(x)}
<
\sum_{x\in\operatorname{Fix}_1(\bar\sigma)}
\e^{-hH_0(x)}
=
1,
\]
and hence $\zeta(s)$ is holomorphic on $\Re(s)>1$.
}

{
For each periodic orbit $\gamma$ of $(\bar\Sigma,\bar\sigma)$, define
\[
\mathcal N(\gamma)
:=
\e^{h\ell(\gamma)}.
\]
A fictitious orbit is a formal product
\[
\gamma'
=
\gamma_1^{n_1}\cdots\gamma_m^{n_m},
\]
where each $\gamma_j$ is a primitive periodic orbit and $n_j\in\mathbb N$. For such composite fictitious orbits, extend the weight by
\[
\mathcal N(\gamma')
=
\mathcal N(\gamma_1)^{n_1}
\cdots
\mathcal N(\gamma_m)^{n_m}.
\]
Define the \textit{dynamical von Mangoldt function} by
\[
\Lambda_{\mathrm{dyn}}(\gamma')
=
\begin{cases}
\log\mathcal N(\gamma),
&
\text{if }\gamma'=\gamma^l
\text{ for a primitive }\gamma,\ l\gs1,\\
0,
&
\text{otherwise}.
\end{cases}
\]
The dynamical zeta function is given by the Euler product over primitive orbits:
\[
\zeta(s)
=
\prod_\gamma
\left(
1-\mathcal N(\gamma)^{-s}
\right)^{-1}
=
\exp
\left(
\sum_\gamma
\sum_{k=1}^\infty
\frac1k
\mathcal N(\gamma)^{-ks}
\right),
\]
convergent for $\Re(s)>1$.
}

\begin{lemma}\label{ext}
$\zeta(s)$ admits a meromorphic extension to a neighborhood of $\{\Re(s)\gs1\}$ with simple poles at
\[
s
=
1+\i\frac{2\pi}{h}k,
\qquad
k\in\mathbb Z.
\]
\end{lemma}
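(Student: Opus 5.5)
The plan is to exploit the renewal structure already visible in the formula
\[
\zeta(s)=\frac{1}{1-F(s)},\qquad F(s):=\sum_{x\in\operatorname{Fix}_1(\bar\sigma)}\e^{-shH_0(x)} .
\]
First I would rewrite $F$ as a power series in $z=\e^{-sh}$. A fixed point of $\bar\sigma$ is a constant sequence $([\underline a],[\underline a],\dots)$, and its return word $\underline a$ is a first-return loop at $V_0$ in the graph of $Z$. So $\operatorname{Fix}_1(\bar\sigma)$ is in bijection with the first-return periodic points of $\sigma_Z$ in $[V_0]_Z$, with $H_0$ equal to the return time. Setting
\[
a_n:=\sharp\{x\in\operatorname{Fix}_1(\bar\sigma):H_0(x)=n\}=\mathrm F^*_n([V_0]_Z,0,\sigma_Z),
\]
we get
\[
F(s)=\sum_{n\gs1}a_n\e^{-shn}.
\]
Since $Z$ is SPR (proved above), Definition \ref{def:recurrence}(2) gives $\limsup_n\frac1n\log a_n<h$. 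Hence the power series $\sum a_nz^n$ has radius of convergence strictly larger than $\e^{-h}$. Therefore $F$ is holomorphic on a half-plane $\{\Re(s)>1-\epsilon_0\}$ for some $\epsilon_0>0$, and $\zeta=1/(1-F)$ is meromorphic there. This already gives the meromorphic extension. It remains to locate the zeros of $1-F$ on $\{\Re(s)\gs1\}$ and show they are simple.

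For the location of the zeros, recall $F(1)=1$, which was established above from $P_G(\bar\Sigma,-hH_0)=0$. For $\Re(s)\gs1$ the triangle inequality gives
\[
|F(s)|\ls\sum_na_n\e^{-\Re(s)hn}\ls F(1)=1,
\]
and the second inequality is strict when $\Re(s)>1$. So zeros can only occur on $\Re(s)=1$. On that line, $F(1+\i t)=1$ forces $\e^{-\i thn}=1$ for every $n$ with $a_n>0$, that is, $thn\in2\pi\mathbb Z$ for all such $n$. The key combinatorial input is that $\gcd\{n:a_n>0\}=1$. I would deduce this from the topological mixing of $Z$ proved above: every loop at $V_0$ decomposes into first-return loops, so the set of lengths of loops at $V_0$ is the additive semigroup generated by $\{n:a_n>0\}$. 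For a mixing irreducible Markov shift this set of loop lengths has gcd $1$. Hence $t\in\frac{2\pi}{h}\mathbb Z$, and the zeros are exactly the points $1+\i\frac{2\pi}{h}k$. Conversely, each of these points is a zero, because $F$ is $\frac{2\pi\i}{h}$-periodic.

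For simplicity of the zeros,
\[
F'(1)=-h\sum_nna_n\e^{-hn}.
\]
This sum is finite by the exponential bound from SPR and strictly positive because $a_n\not\equiv0$. So $F'(1)\neq0$, and by periodicity every zero is simple. Thus $\zeta$ has simple poles at $1+\i\frac{2\pi}{h}k$.

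The last step is to obtain a uniform neighbourhood of $\{\Re(s)\gs1\}$ containing no other poles. Because of the $\frac{2\pi\i}{h}$-periodicity, it suffices to work on the compact rectangle $\{1-\epsilon\ls\Re(s)\ls1+\epsilon,\ 0\ls\Im(s)\ls2\pi/h\}$. On the vertical segment $\Re(s)=1$, $1-F$ vanishes only at the endpoints, and these are simple zeros. Since the zeros of a nonconstant holomorphic function are isolated, shrinking $\epsilon$ removes all other zeros from the rectangle. The step I expect to need the most care is the aperiodicity claim $\gcd\{n:a_n>0\}=1$, which must be tied precisely to the mixing of $Z$ via the loop decomposition at $V_0$. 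Everything else follows from SPR and the identity $F(1)=1$.
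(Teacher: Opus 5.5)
Your proof is correct and follows the paper's argument. Both use the renewal identity $\zeta=1/(1-F)$, SPR of $Z$ to extend $F$ holomorphically past $\Re(s)=1$, the triangle inequality together with $\gcd\{H_0\}=1$ (from mixing of $Z$) to locate the zeros on $\Re(s)=1$, $2\pi\i/h$-periodicity, and compactness of one fundamental strip. The only difference is the simplicity step: you differentiate the series directly, $F'(1)=-h\sum_n na_n\e^{-hn}\neq0$, while the paper differentiates the transfer-operator eigenvalue equation to get $p'(1)=-h\int H_0\,\td\bar\mu$; since $\e^{p(s)}=F(s)$ these are the same quantity, and your route is more elementary.
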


\begin{proof}
{
Set
\[
\phi_s:=-shH_0.
\]
For $s\in\mathbb C$ near $1$, the Ruelle transfer operator $\mathcal L_s$
admits the simple eigenvalue
\[
\lambda(s)=\e^{p(s)}.
\]
The corresponding eigenfunction $g_s$ satisfies
\begin{equation}\label{eigen}
\mathcal L_sg_s
=
\e^{p(s)}g_s.
\end{equation}
Since $\bar\Sigma$ is a full shift and $H_0$ depends only on the zero-th
coordinate, the constant function is an eigenfunction and hence
\[
\e^{p(s)}
=
\sum_{x\in\operatorname{Fix}_1(\bar\sigma)}
\e^{\phi_s(x)}.
\]
Consequently,
\[
\zeta(s)
=
\frac{1}
{1-\displaystyle\sum_{x\in\operatorname{Fix}_1(\bar\sigma)}
\e^{\phi_s(x)}}
=
\frac{1}{1-\e^{p(s)}}.
\]

For $n\gs1$, set
\[
c_n
:=
\sharp
\left\{
x\in\operatorname{Fix}_1(\bar\sigma):
H_0(x)=n
\right\}.
\]
By the first-return construction,
\[
c_n
=
\mathrm{F}^*_n([V_0]_Z,0,\sigma_Z).
\]
Since $Z$ is SPR and $h_{\mathrm{top}}(\sigma_Z,Z)=h$,
\[
\limsup_{n\to\infty}
\frac1n\log c_n
<
h.
\]
Thus, for some $\delta>0$,
\[
F(s)
:=
\sum_{x\in\operatorname{Fix}_1(\bar\sigma)}
\e^{-shH_0(x)}
=
\sum_{n\gs1}c_n\e^{-shn}
\]
converges locally uniformly and is holomorphic on
$\Re(s)>1-\delta$; in particular, convergence and holomorphy on
$\Re(s)>1$ are automatic. Hence
\[
\zeta(s)=\frac{1}{1-F(s)}
\]
is meromorphic on $\Re(s)>1-\delta$.
}

\smallskip
\smallskip

We next split the proof into two steps:

\bigskip

\noindent{\bf Step 1:} $s=1$ is a simple pole of $\zeta(s)$.

\smallskip
\smallskip

{
We differentiate both sides of \eqref{eigen} with respect to $s$. For the left-hand side, using
\[
\frac{\td}{\td s}\mathcal L_s
=
\mathcal L_s\circ\frac{\td\phi_s}{\td s},
\]
we obtain
\[
\frac{\td}{\td s}
\bigl(
\mathcal L_sg_s
\bigr)
=
\mathcal L_s
\left(
\frac{\td\phi_s}{\td s}g_s
+
\frac{\td g_s}{\td s}
\right).
\]
Since
\[
\frac{\td\phi_s}{\td s}
=
-hH_0,
\]
this becomes
\[
\frac{\td}{\td s}
\bigl(
\mathcal L_sg_s
\bigr)
=
\mathcal L_s
\left(
-hH_0g_s
+
\frac{\td g_s}{\td s}
\right).
\]
For the right-hand side of \eqref{eigen}, the product rule gives
\[
\frac{\td}{\td s}
\left(
\e^{p(s)}g_s
\right)
=
\e^{p(s)}
\left(
p'(s)g_s
+
\frac{\td g_s}{\td s}
\right).
\]
Equating the derivatives yields
\begin{equation}\label{A}
\mathcal L_s
\left(
-hH_0g_s
+
\frac{\td g_s}{\td s}
\right)
=
\e^{p(s)}
\left(
p'(s)g_s
+
\frac{\td g_s}{\td s}
\right).
\end{equation}

We now evaluate \eqref{A} at $s=1$. Recall that $\e^{p(1)}=1$. Let $g_1$ be the eigenfunction of $\mathcal L_1$, satisfying
\[
\mathcal L_1g_1=g_1.
\]
The equilibrium state $\bar\mu$ is related to the eigenmeasure $m_1$ of $\mathcal L_1^*$ by
\[
\td\bar\mu
=
g_1\,\td m_1,
\]
and
\[
\mathcal L_1^*m_1=m_1.
\]
Substituting $s=1$ into \eqref{A}, we obtain
\begin{equation}\label{B}
\mathcal L_1
\left(
-hH_0g_1+g_1'
\right)
=
p'(1)g_1+g_1',
\end{equation}
where
\[
g_1'
=
\left.
\frac{\td g_s}{\td s}
\right|_{s=1}.
\]
Integrating both sides of \eqref{B} against $m_1$ and using
\[
\int_{\bar\Sigma}
\mathcal L_1\varphi\,\td m_1
=
\int_{\bar\Sigma}
\varphi\,\td m_1,
\]
we obtain
\[
\int_{\bar\Sigma}
\left(
-hH_0g_1+g_1'
\right)
\,\td m_1
=
p'(1)
\int_{\bar\Sigma}
g_1\,\td m_1
+
\int_{\bar\Sigma}
g_1'\,\td m_1.
\]
The terms involving $g_1'$ cancel. Since
\[
\int_{\bar\Sigma}
g_1\,\td m_1
=
1,
\]
we obtain
\[
p'(1)
=
-h
\int_{\bar\Sigma}
H_0\,\td\bar\mu.
\]
Consequently,
\[
\left.
\frac{\td}{\td s}
\e^{p(s)}
\right|_{s=1}
=
-h
\int_{\bar\Sigma}
H_0\,\td\bar\mu
\neq0.
\]
Hence $s=1$ is a simple pole.
}

\bigskip

\noindent{\bf Step 2:} $s=1+\i\frac{2\pi}{h}k$ for $k\in\mathbb Z$ are the only poles of $\zeta(s)$.

\smallskip
\smallskip

{
Since $(Z,\sigma_Z)$ is topologically mixing, the greatest common divisor of the first-return times to $[V_0]_Z$ is equal to $1$. Under the induced coding, these first-return times are precisely
\[
\left\{
H_0(x):
x\in\operatorname{Fix}_1(\bar\sigma)
\right\}.
\]
Hence
\[
\gcd
\left\{
H_0(x):
x\in\operatorname{Fix}_1(\bar\sigma)
\right\}
=
1.
\]
Furthermore,
\[
\zeta\left(
s+\i\frac{2\pi}{h}
\right)
=
\zeta(s),
\]
which implies that
\[
s
=
1+\i k\frac{2\pi}{h},
\qquad
k\in\mathbb Z,
\]
are also simple poles of $\zeta(s)$.

Now suppose
\[
s
=
1+\i c/h
\]
is a singularity of $\zeta(s)$. We have
\[
\left|
\sum_{x\in\operatorname{Fix}_1(\bar\sigma)}
\e^{-\i cH_0(x)}
\e^{-hH_0(x)}
\right|
\ls
\sum_{x\in\operatorname{Fix}_1(\bar\sigma)}
\e^{-hH_0(x)}
=
1.
\]
If the inequality is strict, then $s=1+\i c/h$ cannot be a singularity of $\zeta(s)$. Consequently, there exists $\theta\in[0,2\pi)$ such that
\[
\sum_{x\in\operatorname{Fix}_1(\bar\sigma)}
\e^{-\i cH_0(x)}
\e^{-hH_0(x)}
=
\e^{\i\theta}.
\]
In fact, $\e^{\i\theta}=1$; otherwise, $s=1+\i c/h$ is not a singularity of $\zeta(s)$. Therefore,
\[
\sum_{x\in\operatorname{Fix}_1(\bar\sigma)}
\e^{-\i cH_0(x)}
\e^{-hH_0(x)}
=
1.
\]
Combined with
\[
\sum_{x\in\operatorname{Fix}_1(\bar\sigma)}
\e^{-hH_0(x)}
=
1,
\]
convexity yields
\[
\e^{-\i cH_0(x)}
=
1
\quad
\text{for every }
x\in\operatorname{Fix}_1(\bar\sigma).
\]
Since the greatest common divisor of the values $H_0(x)$ is $1$, $c$ is an integer multiple of $2\pi$. Hence the only poles of $\zeta(s)$ on the line $\Re(s)=1$ are
\[
s
=
1+\i k\frac{2\pi}{h},
\qquad
k\in\mathbb Z.
\]
Since $F(s)$ is holomorphic and $2\pi\i/h$-periodic on
$\Re(s)>1-\delta$, these boundary zeros of $1-F(s)$ are simple and
isolated. By compactness of one fundamental strip, after decreasing
$\delta>0$ if necessary, $1-F(s)$ has no other zeros in
$\Re(s)\gs1-\delta$. Thus these are the only poles in a neighborhood of
$\{\Re(s)\gs1\}$.
}
\end{proof}

\subsection{Counting on periodic orbits}

{
For $R\geqslant1$, define
\[
\Psi(R)
:=
\sum_{\mathcal N(\gamma')\ls R}
\Lambda_{\mathrm{dyn}}(\gamma').
\]
Then
\[
\begin{aligned}
\frac{\zeta'(s)}{\zeta(s)}
&=
(\log\zeta(s))'\\[2mm]
&=
-\sum_\gamma
\log\mathcal N(\gamma)
\sum_{k=1}^\infty
\mathcal N(\gamma)^{-ks}\\[2mm]
&=
-\sum_{\gamma'}
\frac{\Lambda_{\mathrm{dyn}}(\gamma')}
{\mathcal N(\gamma')^s}\\[2mm]
&=
-\int_1^\infty
R^{-s}\,\td\Psi(R).
\end{aligned}
\]
Let
\[
b:=\e^h.
\]
Then $\mathcal N(\gamma')$ is a positive integer power of $b$. Thus,
\begin{equation}\label{z1}
\frac{\zeta'(s)}{\zeta(s)}
=
-\sum_{n=1}^\infty
b^{-ns}
\sum_{\mathcal N(\gamma')=b^n}
\Lambda_{\mathrm{dyn}}(\gamma').
\end{equation}
}

By Lemma \ref{ext}, the function $\zeta'(s)/\zeta(s)$ has simple poles at
\[
s
=
1+\i k(2\pi/h),
\qquad
k\in\mathbb Z,
\]
with residue $-1$. Therefore,
\begin{equation}\label{z2}
\frac{\zeta'(s)}{\zeta(s)}
=
\frac{-h}{1-\e^{-h(s-1)}}
+
v(s)
=
-h
\sum_{n=0}^\infty
\e^{nh}\e^{-nhs}
+
v(s),
\end{equation}
where $v(s)$ is analytic on $\Re(s)>1-\delta$ for some $\delta>0$.
{
Moreover, by \eqref{z2} and the $2\pi\i/h$-periodicity of both
$\zeta'(s)/\zeta(s)$ and the principal term, $v(s)$ is
$2\pi\i/h$-periodic.
}

\begin{lemma}\label{Zest}
{
\[
\Psi(R)
\sim
h\sum_{\substack{n\gs1\\ \e^{nh}\ls R}}\e^{nh},
\]
where $u_R\sim w_R$ means
\[
\lim_{R\to\infty}
\frac{u_R}{w_R}
=
1.
\]
}
\end{lemma}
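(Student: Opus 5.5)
Since every $\mathcal N(\gamma')$ is an integer power of $b=\e^h$, the function $\Psi$ is a step function and it suffices to study the coefficients
\[
a_n:=\sum_{\mathcal N(\gamma')=b^n}\Lambda_{\mathrm{dyn}}(\gamma'),\qquad n\gs1,
\]
so that $\Psi(R)=\sum_{n\gs1,\ b^n\ls R}a_n$. I will show that $a_n=h\e^{nh}+O(\e^{n h(1-\delta')})$ for some $\delta'>0$. This is enough: summing gives $\Psi(R)=h\sum_{\e^{nh}\ls R}\e^{nh}+O(\sum_{\e^{nh}\ls R}\e^{nh(1-\delta')})$, and the error sum is $o$ of the geometric main sum, which is comparable to its last term.

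To extract $a_n$, substitute $z=\e^{-hs}$, so that $b^{-ns}=z^n$. Equation \eqref{z1} then reads $\zeta'(s)/\zeta(s)=-\sum_{n\gs1}a_nz^n$. The $2\pi\i/h$-periodicity of $\zeta$ (and hence of $\zeta'/\zeta$ and of $v$) makes both sides genuine functions of $z$. The half plane $\Re(s)>1-\delta$ corresponds to the disc $|z|<\e^{-h(1-\delta)}$, and the line $\Re(s)=1$ corresponds to the circle $|z|=\e^{-h}$. In the $z$-variable, \eqref{z2} becomes
\[
-\sum_n a_nz^n=-h\sum_{n\gs0}\e^{nh}z^n+\tilde v(z),
\]
where $\tilde v(z):=v(s)$ is well defined by periodicity. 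Since $v$ is analytic on $\Re(s)>1-\delta$, the function $\tilde v$ is holomorphic on $0<|z|<\e^{-h(1-\delta)}$. At $z=0$ (that is, $\Re s\to\infty$), both $\zeta'/\zeta$ and the principal part tend to finite limits, so $\tilde v$ is bounded there and the singularity is removable. Hence $\tilde v$ is holomorphic on the full disc $|z|<\e^{-h(1-\delta)}$.

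Comparing Taylor coefficients gives $a_n=h\e^{nh}-[z^n]\tilde v$. Cauchy's estimate on the circle of radius $\e^{-h(1-\delta/2)}$ bounds the coefficients by $|[z^n]\tilde v|\ls C\e^{nh(1-\delta/2)}$. This proves the claimed asymptotic for $a_n$, and the summation step above yields the lemma.

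The main point needing care is justifying that $v$ really is analytic on a full strip $\Re(s)>1-\delta$, so that $\tilde v$ has radius of convergence strictly larger than $\e^{-h}$. This follows from Lemma \ref{ext}, whose proof shows that the only zeros of $1-F$ in $\Re(s)\gs1-\delta$ are at $1+2\pi\i k/h$ and that they are simple. The residue of $\zeta'/\zeta$ there is $-1$, and the residue of the principal term $-h/(1-\e^{-h(s-1)})$ is also $-1$, so subtracting it removes every pole. Finally, one should check that \eqref{z1} converges for $\Re s>1$, which holds since $\zeta$ is zero-free there. This ensures the coefficient identification takes place inside a common disc of convergence.
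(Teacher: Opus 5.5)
Your proposal is correct and follows essentially the same route as the paper. Like the paper, you use the $2\pi\i/h$-periodicity to pass to $z=\e^{-hs}$, remove the singularity of $v$ at $z=0$, match the coefficients of \eqref{z1} and \eqref{z2}, and bound the Taylor coefficients of the analytic remainder by $C\e^{nh(1-\delta/2)}$. The only differences are cosmetic: you use Cauchy's estimate where the paper uses Cauchy--Hadamard, and direct geometric summation where the paper uses Stolz--Ces\`aro along $R=b^m$.
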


\begin{proof}
{
By the $2\pi\i/h$-periodicity of $v(s)$, the substitution
\[
z=b^{-s}
\]
defines a single-valued holomorphic function $V(z)$ on the punctured disk
\[
0<|z|<b^{-1+\delta}.
\]
Moreover, as $\Re(s)\to+\infty$, one has $\zeta'(s)/\zeta(s)\to0$ and the
principal term in \eqref{z2} tends to $-h$, so $v(s)\to h$. Thus the
singularity of $V$ at $z=0$ is removable, and
\[
v(s)
=
\sum_{n=0}^\infty
a_nb^{-ns}.
\]
Equating \eqref{z1} and \eqref{z2} and matching the coefficients of
$b^{-ns}$ for all $n\gs1$, we obtain
\[
-
\sum_{\mathcal N(\gamma')=b^n}
\Lambda_{\mathrm{dyn}}(\gamma')
=
-hb^n+a_n,
\]
and hence
\[
\sum_{\mathcal N(\gamma')=b^n}
\Lambda_{\mathrm{dyn}}(\gamma')
-
h\e^{nh}
=
-a_n.
\]
The radius of convergence $R_0$ of
\[
\sum_{n=0}^\infty a_nz^n
\]
satisfies
\[
R_0\geqslant b^{-1+\delta}
>
b^{-1+\delta/2}.
\]
By the Cauchy--Hadamard formula,
\[
\limsup_{n\to\infty}
\sqrt[n]{|a_n|}
<
b^{1-\delta/2}.
\]
Therefore, there exists $C>0$ such that
\[
|a_n|
\ls
Cb^{n(1-\delta/4)}
=
C\e^{nh(1-\delta/4)}
\]
for all $n\gs1$.

Finally, applying the Stolz--Ces\`aro theorem to the sequence of cutoffs $R=b^m$, we obtain
\[
\begin{aligned}
\lim_{m\to\infty}
\frac{\Psi(b^m)}
{h\displaystyle\sum_{1\ls n\ls m}\e^{nh}}
&=
\lim_{m\to\infty}
\frac{
\displaystyle\sum_{\mathcal N(\gamma')=b^m}
\Lambda_{\mathrm{dyn}}(\gamma')
}
{h\e^{mh}}\\[2mm]
&=
\lim_{m\to\infty}
\frac{h\e^{mh}-a_m}
{h\e^{mh}}
=
1.
\end{aligned}
\]
Since $\mathcal N(\gamma')$ takes only the values $b^n$, both
$\Psi(R)$ and $\sum_{\substack{n\gs1\\ \e^{nh}\ls R}}\e^{nh}$ are constant between
successive powers of $b$. Hence the same limit holds for arbitrary
$R\to\infty$. This proves the lemma.
}
\end{proof}

{
Define the periodic-orbit counting function
\[
\Pi(R)
:=
\sharp
\left\{
\gamma\text{ primitive}:
\mathcal N(\gamma)\ls R
\right\}.
\]
}

\begin{lemma}\label{asy eq}
{
\[
\Pi(R)
\sim
\frac{\Psi(R)}{\log R}.
\]
}
\end{lemma}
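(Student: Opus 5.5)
This is the classical Chebyshev comparison between $\pi(x)$ and $\psi(x)$. The idea is to split $\Psi$ into a prime-orbit part and a negligible part coming from proper powers. Only the asymptotic $\Psi(R)\sim h\sum_{e^{nh}\ls R}e^{nh}$ from Lemma~\ref{Zest} is needed as input. That asymptotic gives $\Psi(R)\asymp R$ along $R=b^m$, and more generally $\Psi(R)\ls C R$ for all $R\gs1$.

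\textbf{Step 1: discard proper powers.} Write
\[
\Psi(R)=\theta(R)+\Psi_{\gs2}(R),\qquad \theta(R):=\sum_{\mathcal N(\gamma)\ls R}\log\mathcal N(\gamma),
\]
where $\theta$ sums over primitive orbits and $\Psi_{\gs2}$ collects the contributions of $\gamma^l$ with $l\gs2$. A term $\gamma^l$ with $\mathcal N(\gamma^l)\ls R$ forces $\mathcal N(\gamma)\ls R^{1/l}$. Since $\mathcal N(\gamma)\gs b>1$, only $l\ls \log R/h$ occur, and therefore
\[
\Psi_{\gs2}(R)\ls \sum_{2\ls l\ls \log R/h}\theta(R^{1/l})\ls C\bigl(R^{1/2}+ (\log R)\,R^{1/3}\bigr)=o\bigl(R/\log R\bigr).
\]
Combined with $\Psi(R)\gs cR$ along the relevant range, this gives $\theta(R)\sim\Psi(R)$.

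\textbf{Step 2: partial summation.} By Abel summation,
\[
\Pi(R)=\frac{\theta(R)}{\log R}+\int_{b}^{R}\frac{\theta(t)}{t\log^2 t}\,\td t .
\]
Bounding $\theta(t)\ls Ct$ makes the integral at most $C\int_b^R\frac{\td t}{\log^2 t}$. Splitting this at $\sqrt R$ gives $O(\sqrt R)+O(R/\log^2R)=o(R/\log R)$. Since $\Psi(R)\gs cR$, it follows that $\Pi(R)=\theta(R)/\log R+o(\Psi(R)/\log R)\sim\Psi(R)/\log R$.

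\textbf{Main obstacle: the lower bound $\Psi(R)\gs cR$ for all large $R$.} Because $\Psi$ is a step function with jumps only at powers of $b$, $R/\Psi(R)$ is bounded only up to the factor $b$: for $b^m\ls R<b^{m+1}$ one gets $\Psi(R)=\Psi(b^m)\asymp b^m\gs R/b$. This bound of order $R$ is enough to absorb all the $o(R/\log R)$ errors above.

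Since all quantities are constant between successive powers of $b$, I would carry out the argument at $R=b^m$ and then extend. There $\log R=mh$, and the ratio $\log(b^m)/\log R\to1$ as $m\to\infty$ uniformly for $R\in[b^m,b^{m+1})$. So the asymptotic transfers to arbitrary $R\to\infty$ in exactly the same way as at the end of the proof of Lemma~\ref{Zest}.
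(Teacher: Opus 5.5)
Your proof is correct, but it takes a different route from the paper's. You pass through the primitive-orbit Chebyshev function $\theta(R)$: you discard proper powers, then use Abel summation with the bound $\theta(t)\le Ct$.

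The paper does neither of these things. For one inequality it uses the trivial bound $\Psi(R)\le\Pi(R)\log R$, since each primitive $\gamma$ contributes $k_R(\gamma)\log\mathcal N(\gamma)\le\log R$. For the other it splits $\Pi(R)\le\Pi(y)+\Psi(R)/\log y$ at $y=(R/\log R)^\theta$. It controls $\Pi(y)$ by the soft estimate $\Pi(R)=o(R^q)$ for every $q>1$, which comes from convergence of the Euler product $\zeta(q')$ at real $q'\in(1,q)$. Letting $\theta\to1^-$ finishes the argument. The only input from Lemma~\ref{Zest} in that route is the lower bound $\Psi(R)\ge cR$.

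Your route needs the two-sided bound $cR\le\Psi(R)\le CR$. Lemma~\ref{Zest} does supply both halves, and you handle the step-function issue correctly. In exchange, your argument is quantitative. It shows that proper powers contribute only $O(R^{1/2}+R^{1/3}\log R)$, and it yields $\Pi(R)=\Psi(R)/\log R+O(R/\log^2R)$ rather than a bare asymptotic. The paper's route is softer and avoids calculus.

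Your final paragraph, about working at $R=b^m$ and then transferring, is unnecessary. Steps 1 and 2 already hold for arbitrary real $R\to\infty$. It is harmless, though.
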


\begin{proof}
{
By the definition of the dynamical von Mangoldt function,
\[
\Psi(R)
=
\sum_{\substack{\gamma\ {\rm primitive}\\
\mathcal N(\gamma)\ls R}}
k_R(\gamma)\log\mathcal N(\gamma),
\]
where
\[
k_R(\gamma)
:=
\left\lfloor
\frac{\log R}{\log\mathcal N(\gamma)}
\right\rfloor
\]
is the maximal positive integer such that
\[
\mathcal N(\gamma)^{k_R(\gamma)}\ls R.
\]
Thus
\[
\log\mathcal N(\gamma)
\ls
k_R(\gamma)\log\mathcal N(\gamma)
\ls
\log R.
\]
Summing over all primitive orbits with $\mathcal N(\gamma)\ls R$ gives
\[
\Psi(R)
\ls
\sum_{\substack{\gamma\ {\rm primitive}\\
\mathcal N(\gamma)\ls R}}
\log R
=
\Pi(R)\log R.
\]

We next use the Euler product to prove
\[
\frac{\Pi(R)}{R^q}
\longrightarrow
0
\]
for every real $q>1$. Choose $1<q'<q$. Then
\[
\begin{aligned}
\zeta(q')
&=
\prod_\gamma
\left(
1-\mathcal N(\gamma)^{-q'}
\right)^{-1}\\[2mm]
&\gs
\prod_{\mathcal N(\gamma)\ls R}
\left(
1-R^{-q'}
\right)^{-1}\\[2mm]
&=
\left(
1-R^{-q'}
\right)^{-\Pi(R)}.
\end{aligned}
\]
Taking logarithms yields
\[
\frac{\log\zeta(q')}
{R^{q-q'}}
\gs
\frac{\Pi(R)}{R^q},
\]
and hence
\[
\frac{\Pi(R)}{R^q}
\longrightarrow0.
\]

Next fix $0<\theta<1$ and put
\[
y
=
\left(
\frac{R}{\log R}
\right)^\theta.
\]
Then
\[
\begin{aligned}
\Pi(R)
&=
\Pi(y)
+
\sum_{\substack{\gamma\\
y<\mathcal N(\gamma)\ls R}}
1\\[2mm]
&\ls
\Pi(y)
+
\sum_{\substack{\gamma\\
\mathcal N(\gamma)\ls R}}
\frac{\log\mathcal N(\gamma)}
{\log y}\\[2mm]
&\ls
\Pi(y)
+
\frac{\Psi(R)}{\log y}\\[2mm]
&=
\Pi(y)
+
\frac{\Psi(R)}
{\theta(\log R-\log\log R)}.
\end{aligned}
\]
Therefore,
\[
\varlimsup_{R\to\infty}
\frac{\Pi(R)\log R}
{\Psi(R)}
\ls
\varlimsup_{R\to\infty}
\frac{\Pi(y)\log R}
{\Psi(R)}
+
\frac1\theta.
\]
Since
\[
\frac{\Pi(y)}{y^{1/\theta}}
\longrightarrow0
\]
and Lemma \ref{Zest} implies that
\[
\frac{R}{\Psi(R)}
\]
is bounded from above, we have
\[
\begin{aligned}
\varlimsup_{R\to\infty}
\frac{\Pi(y)\log R}
{\Psi(R)}
&=
\varlimsup_{R\to\infty}
\left(
\frac{\Pi(y)}{y^{1/\theta}}
\cdot
\frac{R}{\Psi(R)}
\cdot
\frac{y^{1/\theta}\log R}{R}
\right)\\[2mm]
&=
0,
\end{aligned}
\]
where
\[
\frac{y^{1/\theta}\log R}{R}
=
1.
\]
Thus,
\[
\varlimsup_{R\to\infty}
\frac{\Pi(R)\log R}
{\Psi(R)}
\ls
\frac1\theta.
\]
Letting $\theta\to1^-$, we obtain
\[
\varlimsup_{R\to\infty}
\frac{\Pi(R)\log R}
{\Psi(R)}
\ls
1.
\]
Together with
\[
\Psi(R)\ls\Pi(R)\log R,
\]
this gives
\[
\Psi(R)
\sim
\Pi(R)\log R,
\]
or equivalently
\[
\Pi(R)
\sim
\frac{\Psi(R)}{\log R}.
\]
}
\end{proof}

{
For each integer $n\gs1$, define
\[
\mathfrak P_{\leq n}
:=
\left\{
\text{primitive periodic }\gamma:
\ell(\gamma)\ls n
\right\},
\qquad
\Pi_{\leq n}
:=
\sharp\mathfrak P_{\leq n},
\]
and
\[
\Pi_n
:=
\Pi_{\leq n}
-
\Pi_{\leq n-1},
\qquad
\Pi_{\leq0}:=0.
\]
Combining Lemmas \ref{Zest} and \ref{asy eq}, we obtain
\[
\Pi_{\leq n}
=
\Pi(\e^{hn})
\sim
\frac{\Psi(\e^{hn})}{hn}
\sim
\frac{\displaystyle\sum_{1\ls j\ls n}\e^{jh}}{n}.
\]
Thus, for every $\epsilon>0$, there exists $n_0\in\mathbb N$ such that, for every integer $n\gs n_0$,
\[
(1-\epsilon)
\sum_{1\ls j\ls n}
\e^{jh}
\ls
n\Pi_{\leq n}
\ls
(1+\epsilon)
\sum_{1\ls j\ls n}
\e^{jh}.
\]
Each corresponding periodic orbit of return-time length $n$ contains exactly $n$ periodic points in $Z$. Hence
\[
n\Pi_n
=
n
\left(
\Pi_{\leq n}
-
\Pi_{\leq n-1}
\right)
\]
counts the periodic points $\hat x\in Z$ of period $n$ whose orbits visit $[V_0]_Z$.
For every $n\gs n_0+1$,
\[
\begin{aligned}
n\Pi_n
&=
n\Pi_{\leq n}
-
(n-1)\Pi_{\leq n-1}
-
\Pi_{\leq n-1}\\[2mm]
&\ls
(1+\epsilon)
\sum_{1\ls j\ls n}
\e^{jh}
-
(1-\epsilon)
\sum_{1\ls j\ls n-1}
\e^{jh}
-
\Pi_{\leq n-1}\\[2mm]
&\ls
\e^{nh}
+
2\epsilon
\frac{\e^h(\e^{nh}-1)}
{\e^h-1}
-
\frac{1-\epsilon}{n-1}
\frac{\e^h(\e^{(n-1)h}-1)}
{\e^h-1}.
\end{aligned}
\]
Dividing by $\e^{nh}$ and letting $n\to\infty$, we obtain
\[
\limsup_{n\to\infty}
\frac{n\Pi_n}{\e^{nh}}
\ls
1
+
2\epsilon
\frac{\e^h}{\e^h-1}.
\]
By the arbitrariness of $\epsilon$,
\[
\limsup_{n\to\infty}
\frac{n\Pi_n}{\e^{nh}}
\ls
1.
\]
}

{
The periodic points in
\[
P^c_{\chi_1,\chi_2}(f,\H,\eta,n)\setminus P_n^w
\]
admit a symbolic lift seeing $w$ and hence, by Corollary
\ref{injective per}, are injectively represented by $n$-periodic points
of $Z$ whose orbits visit $[V_0]_Z$. Therefore
\[
\sharp P^c_{\chi_1,\chi_2}(f,\H,\eta,n)
\ls
n\Pi_n+\sharp P_n^w.
\]
By Proposition \ref{not see magic},
\[
\frac{\sharp P_n^w}{\e^{nh}}
\longrightarrow0.
\]
Consequently,
\[
\limsup_{n\to\infty}
\frac{
\sharp P^c_{\chi_1,\chi_2}(f,\H,\eta,n)
}
{\e^{nh}}
\ls
\limsup_{n\to\infty}
\frac{n\Pi_n}{\e^{nh}}
\ls
1.
\]
}
Combining with Proposition \ref{away}, this implies
\[
\limsup_{n\to\infty}
\frac{
\sharp P_{\chi_1,\chi_2}(f,\H,n)
}
{\e^{n h_{\top}(f,\H)}}
\ls
1.
\]
Thus, the proof of Theorem \ref{exp} is complete.

\begin{proof}[Proof of Corollary \ref{transitive case}]
Since $f$ is transitive, there exists a unique homoclinic class $\H$ such that $h_{\top}(f,\mathcal{H}) = h_{\top}(f) > 0$. It follows that there exists a unique MME $\mu$ (supported on $\mathcal{H}$) with $h_{\mu}(f) = h_{\top}(f)$, see \cite[Theorem 1]{BCS}. From Proposition \ref{away}, we know that the periodic orbits away from $\mu$ in measure have a growth rate gap from $h_{\text{top}}(f)$. Thus, we only need to consider periodic orbits close to $\mu$ that are $(\chi_1,\chi_2)$-hyperbolic with $\chi_2$ chosen as in (\ref{chi1}). This implies that these periodic orbits are homoclinically related to $\mu$, and hence are contained in $P_{\chi_1,\chi_2}(f, \mathcal{H}, n)$. Therefore, by Theorem \ref{exp}, we have

$$\lim_{\substack{l \mid n \\ n \to \infty}} \frac{\sharp P_{\chi_1,\chi_2}(f, n)}{\e^{n h_{\text{top}}(f)}} = \lim_{\substack{l \mid n \\ n \to \infty}} \frac{\sharp P_{\chi_1,\chi_2}(f, \mathcal{H}, n)}{\e^{n h_{\text{top}}(f)}} = l,
$$
where $l$ is the period of $\mathcal{H}$. In particular, if $f$ is mixing, then $l = 1$.  
\end{proof}
\begingroup
\renewcommand{\addcontentsline}[3]{}

\section*{Data availability}
No data were used for the research described in this article.


\endgroup 


\bigskip



\bibliographystyle{abbrv}
{\footnotesize\bibliography{library}}

\end{document}